 \newtheorem{remark}{Remark}
 \newtheorem{assumption}[theorem]{Assumption}
\def\mb{\mathbf}
\newcounter{Rownumber}
\title{Adaptive operator learning for infinite-dimensional Bayesian inverse problems}
\author
{
Zhiwei Gao\thanks{School of Mathematics, Southeast University, Nanjing 210096, China.}
\and Liang Yan\thanks{School of Mathematics, Southeast University, Nanjing 210096, China. Email: yanliang@seu.edu.cn. LY's work was supported by the NSF of China (Nos. 92370126, 12171085) and the Jiangsu Provincial Scientific Research Center of Applied Mathematics (Grant No. BK20233002).}
\and Tao Zhou\thanks{Institute of Computational Mathematics, Academy of Mathematics and Systems Science, Chinese Academy of Sciences, Beijing 100190, China. Email: tzhou@lsec.cc.ac.cn. TZ's work was supported by the National Key R\&D Program of China (2020YFA0712000), NSF of China (No. 12288201), the Strategic Priority Research Program of Chinese Academy of Sciences (Grant No. XDA25010404), the youth innovation promotion association (CAS), and Henan Academy of Sciences.}
} 
\begin{document}

\maketitle

\begin{abstract}
The fundamental computational issues in Bayesian inverse problems (BIP) governed by partial differential equations (PDEs) stem from the requirement of repeated forward model evaluations. A popular strategy to reduce such costs is to replace expensive model simulations with computationally efficient approximations using operator learning, motivated by recent progress in deep learning. However, using the approximated model directly may introduce a modeling error, exacerbating the already ill-posedness of inverse problems. Thus, balancing between accuracy and efficiency is essential for the effective implementation of such approaches. To this end, we develop an adaptive operator learning framework that can reduce modeling error gradually by forcing the surrogate to be accurate in local areas. This is accomplished by adaptively fine-tuning the pre-trained approximate model with training points chosen by a greedy algorithm during the posterior evaluation process. To validate our approach, we use DeepOnet to construct the surrogate and unscented Kalman inversion (UKI) to approximate the BIP solution, respectively. Furthermore, we present a rigorous convergence guarantee in the linear case using the UKI framework.  The approach is tested on a number of benchmarks, including the Darcy flow, the heat source inversion problem, and the reaction-diffusion problem.  The numerical results show that our method can significantly reduce computational costs while maintaining inversion accuracy. 
\end{abstract}

\begin{keywords}
Operator learning, DeepOnet, Bayesian inverse problems, Unscented Kalman inversion.
\end{keywords}

\pagestyle{myheadings}
\thispagestyle{plain}
\maketitle


\section{Introduction}
Many realistic phenomenons are governed by partial differential equations (PDEs), where the states of the system are described by PDEs solutions. The properties of these systems are characterized by the model parameters, such as the permeability and thermal conductivity, which can not be directly determined. Instead, the parameters can be inferred from the discrete and noisy observations of the states, which are known as  \textit{inverse problems}. Because inverse problems are ill-posed in general, many methods for solving them are based primarily on either regularization theory or Bayesian inference.  By imposing a prior distribution on the parameters, the Bayesian approach can provide a more flexible framework. The solutions to the \textit{Bayesian inverse problems}, or the \textit{posterior distributions}, can then be obtained by conditioning the observations using Baye's formula. In some cases, the model parameters are required to be functions, leading to \textit{infinite-dimensional Bayesian inverse problems}. These cases possibly occur when the model parameters are spatially varying with uncertain spatial structures, which can be found in many realistic applications, including many areas of engineering and science \cite{cui2016scalable,zhu2016bayesian,alexanderian2016fast,bui2013computational,petra2014computational}.

The formulation of infinite-dimensional Bayesian inverse problems presents a number of challenges, including the well-posedness guaranteed by the proper prior selection, as well as the convergence of the solutions governed by the discretization scheme. Following that, dealing with the discrete finite-dimensional posterior distributions can be difficult due to the expensive-to-solve forward models and high-dimensional parameter spaces.   Common methods to deal with these issues include (i) model reduction methods \cite{cui2015data,cui2016scalable,lieberman2010parameter}, which exploit the intrinsic low dimensionality of the governing physical systems, (ii) direct posterior approximation methods, such as Laplace approximation and variational inference \cite{schillings2020convergence,bui2013computational}, and (iii) surrogate modeling \cite{Conrad2016JASA,li2014adaptive,marzouk2007stochastic,yan2017convergence,yan2020adaptive}, which approximates the computationally expensive model with a more efficient, lower-cost alternative. 

Surrogate modeling emerges as the most promising approach for efficiently accelerating the sampling of posterior distributions among the methods listed above. Deep learning methods, specifically deep neural networks (DNN), have recently become the most popular surrogate models in engineering and science due to their power in approximating high-dimensional problems\cite{Han+Jentzen+E2018PNAS,raissi2019physics,Schwab+Zech2019AA,Tripathy+Bilionis2018JCP,Zhu+Zabaras2018bayesian}. In general, DNN employs the power of machine learning to construct a quick-to-evaluate surrogate model to approximate the {\it parameter-to-observation} maps\cite{deveney2019deep,yan2020adaptive,yanRTO2021}. Numerical experiments, such as those described in \cite{yan2020adaptive}, demonstrated that with sufficiently large training datasets, highly accurate approximations can be trained. Traditional deep learning methods, on the other hand, frequently necessitate a lot of training points that are not always available. Furthermore, whenever the measurement operator changes, the surrogate should be retrained. Physical-informed neural networks(PINNs)\cite{raissi2019physics} can address this issue by incorporating the physical laws into the loss function and learning the {\it parameter-to-state} maps\cite{li2023surrogate,nabian2020adaptive}. As a result, they can be used as surrogates for a variety of Bayesian inverse problems involving models governed by the same PDEs but with different types of observations operators, which will further reduce the cost of surrogate construction.  However, PINNs have some limitations\cite{wang2022and,krishnapriyan2021characterizing}, such as hyperparameter sensitivity and the potential for training instability due to the hybrid nature of their loss function. Several solutions have been proposed to address these issues\cite{gao2023failure,gao2023rFINN,CSIAM-AM-5-636,mcclenny2020self,xiang2022self}.  Operator neural networks, such as FNO\cite{li2020fourier} and DeepOnet\cite{lu2021learning}, are able to model complex systems in high-dimensional spaces as infinite-dimensional approximations.  They are therefore promising surrogates, as described in\cite{cao2023residual,genzel2022solving}. However, using approximate models directly may introduce a discrepancy or modeling error, exacerbating an already ill-posed problem and leading to a worse solution. 

In order to reduce model errors, several approaches\cite{Conrad2016JASA,li2014adaptive,yan2020adaptive,yanRTO2021,cleary2021calibrate,yan2021stein} that incorporate local approximation techniques have been applied to Bayesian posterior sampling problems.  For example,  Conrad et al.\cite{Conrad2016JASA} described a framework for building and refining local approximation during an MCMC simulation using either local polynomial approximations or local Gaussian process regressors(GPR). To generate the sample sets used for local approximations, the authors employ a sequential experimental design procedure that interleaves infinite refinement of the approximation with  Markov chain posterior exploration.  Recently, Cleary et al. \cite{cleary2021calibrate} proposed a ``Calibrate-Emulate-Sample"(CES) framework for approximate Bayesian inversion. This approach consists of three main steps: To begin, the ensemble Kalman method\cite{Garbuno2020interacting,iglesias2013ensemble} is used in conjunction with a full-order model to extract sample points from the exact posterior distribution. Next, these sample points are used to create a GPR emulator for the {\it parameter-to-observation} map. Finally, the approximated posterior obtained from the emulator is sampled using direct sampling methods such as MCMC. A similar idea is proposed in \cite{yanRTO2021},  which use a goal-oriented DNN surrogate approach to significantly reduce the computational burden of posterior sampling.  Specifically, they begin by using a Laplace approximation to approximate the posterior distribution. After that, they select training points to build a DNN surrogate from this approximate distribution. In the last stage, direct sampling methods are used to sample from the approximate posterior.  However, these works primarily focus on building  local approximations of the {\it parameter-to-observation} maps.  One significant limitation of this approach is that the calibrate phase (i.e., generating the posterior approximation) necessitates extensive computations with full-order models, which can be computationally expensive. Furthermore, if the observation data changes, the entire process must be restarted from scratch, including recalculating the posterior approximation and reconstructing the local approximation. This can reduce the framework's efficiency and increase its resource requirements, especially in dynamic environments where observations change frequently.  Inspired by these works, we present a mutual learning framework that can reduce model error by forcing the approximate model to be locally accurate for posterior characterization during the inversion process. This is achieved by first using neural network representations of {\it parameter-to-state} maps between function spaces, and then fine-tuning this initial model with points chosen adaptively from the approximate posterior.  This procedure can be repeated multiple times as necessary until the stop criteria is met.    In contrast to the CES  framework, we choose training points from the prior distribution to 
 build the initial emulator offline. When measurement data is available, the surrogate model only needs to be fine-tuned during the sampling process, which involves  specific sampling methods.  The integration of prior knowledge into the initial model construction enhances the robustness and accuracy of the model, while the targeted fine-tuning during sampling improves computational efficiency, making the approach particularly suitable for scenarios requiring fast and reliable decision-making. For the detailed implementation, we use the DeepOnet\cite{lu2021learning} to approximate the {\it parameter-to-state} maps and the unscented Kalman inversion \cite{huang2022iterated} to estimate the posterior distribution. Moreover, we can show that under the linear case, convergence can be obtained if the surrogate is accurate throughout the space, which can also be extended to non-linear cases with locally accurate approximate models. To demonstrate the effectiveness of our method, we propose testing several benchmarks such as the Darcy flow, the heat source inversion problem, and a reaction-diffusion problem.  Our main contributions can be summarized as follows.
\begin{itemize}
    \item We propose a framework for adaptively reducing the surrogate's model error. To maintain local accuracy, the greedy algorithm is proposed for selecting adaptive samples for fine-tuning the pre-trained model. 
    \item We adopt DeepOnet to approximate the {\it parameter-to-state} and then combine the UKI to accelerate infinite-dimensional Bayesian inverse problems. We demonstrate that this approach not only maintains inversion accuracy but also saves a significant amount of computational cost.
    \item We show that in the linear case, the mean vector and the covariance matrix obtained by UKI with an approximate model can converge to those obtained with a full-order model. The results can also be verified in non-linear cases with locally accurate surrogates.
    \item We present several benchmark tests including the Darcy flow, a heat source inversion problem and a reaction-diffusion problem to verify the effectiveness of our approach.
\end{itemize}
The remainder of this paper is organized as follows.  Section \ref{section_bayesian} introduces  infinite-dimensional Bayesian inverse problems as well as the basic concepts of DeepOnet.  Our adaptive framework for model error reduction equipped with greedy algorithm and the unscented Kalman inversion, is presented in Section \ref{section_adaptive_model}.  To confirm the efficiency of our algorithm, several benchmarks are tested in Section \ref{numerical_experiments}. The conclusion is covered in Section \ref{conclusion}.

\section{Background}\label{section_bayesian}

In this section, we first give a brief review of the infinite-dimensional Bayesian inverse problems. Then we will introduce the basic concepts of DeepOnet. 

\subsection{Infinite-dimensional Bayesian inverse problems}

Consider a steady physical system described by the following PDEs:
\begin{equation}
    \label{Problem}
    \begin{cases}
        &\mathcal{H}(u(\mathbf{x});m(\mathbf{x})) = 0, \quad \mathbf{x}\in \Omega,\\ 
        &\mathcal{B}(u(\mb{x}); m(\mathbf{x})) = 0,\quad \mathbf{x}\in \partial \Omega,
    \end{cases}
\end{equation}
where $\mathcal{H}$ denotes the general partial differential operator defined in the domain $\Omega \in \mathbb{R}^{d}$, $\mathcal{B}$ is the boundary operator on the boundary $\partial \Omega$, $m\in \mathcal{M}$ represents the unknown parameter field and $u\in \mathcal{U}$ represents the state field of the system.

Let $y\in \mathbb{R}^{N_{y}}$ denotes a set of discrete and noisy observations at specific locations in $\Omega$. Suppose the state $u$ and $y$ are connected through an observation system $\mathcal{O}:\mathcal{U}\rightarrow \mathbb{R}^{N_{y}}$,
\begin{equation}\label{obs_sys}
    y = \mathcal{O}(u) + \eta, 
\end{equation}
where $\mb{\eta}\sim \mathcal{N}(0, \Sigma_{\eta})$ is a Gaussian with mean zero and covariance matrix $\Sigma_{\eta}$, which models the noise in the observations. Combining the PDE model \eqref{Problem} and the observation system \eqref{obs_sys} defines the \textit{parameter-to-observation} map $\mathcal{G} = \mathcal{O}\circ \mathcal{F}:\mathcal{M}\rightarrow \mathbb{R}^{N_{y}}$, i.e, 
\begin{equation*}
    \label{data_model}
    y = \mathcal{G}(m) +\mb{\eta}.
\end{equation*}
Here  $\mathcal{F}:\mathcal{M}\rightarrow \mathcal{U}$ is the solution operator, or the \textit{parameter-to-state} map, of the PDE model \eqref{Problem}. 

The following least squared functional plays an important role in such inverse problems:
\begin{equation}
    \label{least-sqaured-error}
    \Phi(m;y) = \frac{1}{2}\left\|y - \mathcal{G}(m)\right\|^{2}_{\Sigma_{\eta}},
\end{equation}
where $\|\cdot\|_{\Sigma_{\eta}} = \|\Sigma_{\eta}^{-\frac{1}{2}}\cdot\|$ denotes the weighted Euclidean norm in $\mathbb{R}^{N_{y}}$. In cases where the inverse problem is ill-posed, optimizing   $\Phi$ in $\mathcal{M}$ is not a well-behaved problem, and some type of regularization is necessary. Bayesian inference is another method to consider. In the Bayesian framework, $(m, y)$ is viewed as a jointly varying random variable in $\mathcal{M}\times \mathbb{R}^{N_{y}}$. Given the prior $\nu_0$ on $m$, the solution to the inverse problem is to determine the distribution of  $m$ conditioned on the data $y$, i.e., the posterior $\nu$  given by an infinite dimensional version of  \textit{Bayes' formula} as 
\begin{equation}\label{bayesian_formula}
   \nu(\textcolor{black}{\mathrm{d}m})=\frac{1}{Z(y)}\exp(-\Phi(m;y))\nu_0(\textcolor{black}{\mathrm{d}m}),
\end{equation}
where  $Z(y)$ is the model evidence defined as 
\begin{equation*}
    Z(y) := \int_{\mathcal{M}}\exp(-\Phi(m;y))\nu_{0}(\textcolor{black}{\mathrm{d}m}).
\end{equation*}

In general, the main challenge of \textit{infinite-dimensional Bayesian inverse problems} lies in well-posedness of the problem and numerical methodologies. To guarantee the well-posedness, the prior is frequently considered to be a Gaussian random field, which guarantees the existence of the posterior distribution\cite{bui2013computational,stuart2010inverse}.  To obtain the finite posterior distributions, one can use Karhunen-Loeve (KL) expansions or direct spatial discretization methods. The posterior distribution can then be approximated using numerical techniques like Markov Chain Monte Carlo (MCMC)\cite{Brooks2011} and variational inference (VI)\cite{Blei2017variational}. It should be emphasized that each likelihood function evaluation requires a forward model $\mathcal{G}$ (or $\mathcal{F}$) evaluation.  The computation of the forward model can be very complicated and expensive in some real-world scenarios, making the computation challenging. As a result, it is critical to replace the forward model with a low-cost surrogate model. In this paper, we apply deep operator learning to construct the surrogate in order to substantially reduce computational time.

\subsection{DeepOnet as surrogates}
In this section, we employ the \textit{neural operator} DeepOnet as the surrogate, which is fast to evaluate and can speed up in the \textcolor{black}{posterior evaluations}. The basic idea is to approximate the true forward operator $\mathcal{F}$ with a neural network $\mathcal{F}_{\theta}:\mathcal{M}\rightarrow \mathcal{U}$, where $\mathcal{M}, \mathcal{U}$ are spaces defined before and $\theta$ are the parameters in the neural network.
This neural operator can be interpreted as a combination of \textit{encoder} $\mathcal{E}$, \textit{approximator} $\mathcal{A}$ and  \textit{reconstructor} $\mathcal{R}$ \cite{lanthaler2022error} as depicted in Figs.\ref{encode} and \ref{DeepOnet_figure}, i.e., 
\begin{equation*}
    \label{DeepOnet}
    \mathcal{F}_{\theta}:= \mathcal{R}\circ \mathcal{A}\circ \mathcal{E}.
\end{equation*}
Here, the {\it encoder} $\mathcal{E}$ maps $m$ into discrete values $\{m(\mb{x}_{i})\}_{i=1}^{N_{m}}$ in $\mathbb{R}^{N_{m}}$ at a fixed set of sensors $\{\mb{x}_{i}\}_{i=1}^{N_{m}}\in \Omega$, i.e.,
$$
\mathcal{E}: \mathcal{M} \rightarrow \mathbb{R}^{N_{m}}, \,\, \mathcal{E}(m) =(m(\mb{x}_1),\cdots,m(\mb{x}_{N_m})).
$$
The encoded data is then approximated by the {\it approximator} $\mathcal{A}:\mathbb{R}^{N_{m}}\rightarrow \mathbb{R}^{p}$ through a deep neural network. Given the encoder and approximator, we can define the \textit{branch net} $\mb{\beta}: \mathcal{M} \rightarrow \mathbb{R}^p$ as the composition $\mb{\beta}(m)=\mathcal{A}\circ \mathcal{E}(m)$. The {\it decoder} $\mathcal{R}:\mathbb{R}^{p}\rightarrow \mathcal{U}$ maps the results $\{\beta_{i}\}_{i=1}^{p}$ to $\mathcal{U}$ with the form 
\begin{equation*}
    \label{reconstruction}
    \mathcal{R}(\beta) = \sum_{i=1}^{p}\beta_{i}t_{i}(\mb{x}),\quad \mb{x}\in \Omega, 
\end{equation*}
where $t_{i}$ are the outputs of the \textit{trunk net} as depicted in Fig.\ref{DeepOnet_figure}. Note that by this formula, the \textit{trunk} net approximates the basis of the solution space and the \textit{branch} net approximates the coefficients of the expansion, which together approximate the spectral expansion of the solution space. 

Combined with the branch net and trunk net, the operator network approximation $\mathcal{F}_{\theta}(m)(\mb{x})$ is obtained by finding the optimal $\theta$, which minimizes the following loss function:
\begin{equation}
    \label{loss_function}
\theta^{*} = \arg \min_{\theta \in \Theta} \mathcal{L}(\theta) =\int_{\mathcal{M}}\int_{\Omega}|\mathcal{F}(m)(\mb{x})-\mathcal{F}_{\theta}(m)(\mb{x})|^{2}\,\mathrm{d}\mb{x}\,\nu_{0}(\textcolor{black}{\mathrm{d}m}),
\end{equation}
where $\Theta$ is the parameter space.
It should be noted that the loss function cannot be computed exactly and is usually approximated by Monte Carlo simulation by sampling the space $\mathcal{M}$ and the input sample space $\Omega$. That is, we  take $N_{prior}$ i.i.d samples $m_{1}, m_{2},\cdots, m_{N_{prior}}\sim \nu_{0}$  at $N_{\mb{x}}$ points $\mb{x}_{j}^{1},\cdots, \mb{x}_{j}^{N_{\mb{x}}}$, leading to the following \textit{empirical loss} 
\begin{equation}
    \label{empirical_loss}
    \widehat{\mathcal{L}}_{N_{prior},N_{\mb{x}}}({\theta}):=\frac{1}{NN_{\mb{x}}}\sum_{j=1}^{N}\sum_{k=1}^{N_{\mb{x}}}\left|{\mathcal{F}}(m_{j})(\mb{x}_{j}^{k})-{\mathcal{F}_{\theta}}(m_{j})(\mb{x}_{j}^{k})\right|^{2}.
\end{equation} 
\begin{figure}
    \centering 
    \includegraphics[width = 0.25\textwidth]{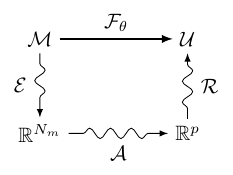}
    \caption{Diagram of the three components of DeepOnet.}
    \label{encode}
\end{figure}

\begin{figure}
    \centering 
    \includegraphics[width = 0.65\textwidth]{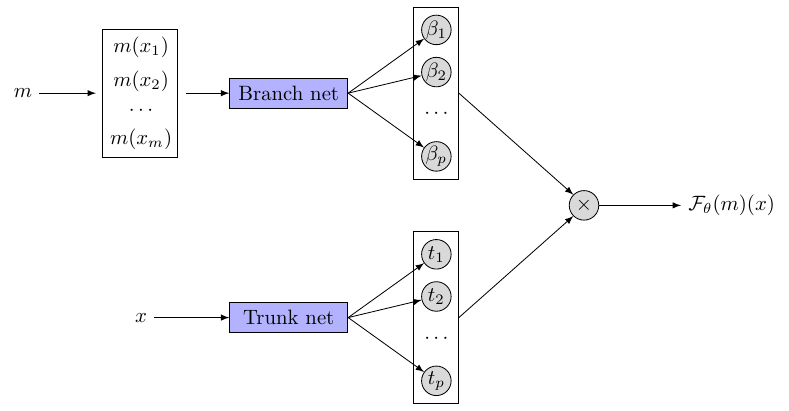}
    \caption{The framework of DeepOnet.}
    \label{DeepOnet_figure}
\end{figure}

After the operator network has been trained, an approximation of the forward model $\mathcal{G}$ can be constructed by adding the observation operator $\mathcal{O}$, i.e., $\widehat{\mathcal{G}} = \mathcal{O}\circ \mathcal{F}_{\theta}$. We then can obtain the surrogate posterior
\begin{equation}\label{apprpost}
\widehat{\nu}(\textcolor{black}{\mathrm{d}m})\propto \exp(-\widehat{\Phi}(m;y))\nu_0(\textcolor{black}{\mathrm{d}m}),
\end{equation}
where $\nu_0$ is again the prior of $m$ and $\widehat{\Phi}(m;y)$ is the approximate least-squares data misfit defined as 
\begin{equation*}
\widehat{\Phi}(m;y): =\frac{1}{2}\left\|y - \widehat{\mathcal{G}}(m)\right\|^{2}_{\Sigma_{\eta}}.
\end{equation*}

The main advantage of the surrogate method is that once an accurate approximation is obtained, it can be evaluated many times without resorting to additional simulations of the full-order forward model. However, using approximate models directly may introduce a discrepancy or modeling error, exacerbating an already ill-posed problem and leading to a worse solution\cite{yan2020adaptive}. Specifically, we can define an $\epsilon$-feasible set $\mathcal{M}(\epsilon):=\{m\in \mathcal{M} | \|\mathcal{G}(m)-\widehat{\mathcal{G}}(m)\|\leq \epsilon\},$ and the associated posterior measure  $ \nu(\mathcal{M}(\epsilon))$ as
 $$
\nu(\mathcal{M}(\epsilon)) = \int_{\mathcal{M}(\epsilon)}\nu(\textcolor{black}{\mathrm{d}m}).
$$
Then, the complement of the $\epsilon$-feasible set is given by $\mathcal{M}^{\bot}(\epsilon)=\mathcal{M}\setminus \mathcal{M}(\epsilon)$, which has posterior measure $\nu(\mathcal{M}^{\bot}(\epsilon))=1-\nu(\mathcal{M}(\epsilon))$.
We can obtain an error bound between $\widehat{\nu}$ and $\nu$ in the Kullback-Leibler  distance:
\begin{theorem}[\cite{yan2019adaptive1}]
    \label{Theorem2.1}
Suppose we have the full posterior distribution $\nu$ and its approximation  $\widehat{\nu}$ induced by the surrogate $\widehat{\mathcal{G}}$. For a given $\epsilon$, there exist constants $K_1>0$ and $K_2>0$ such that
\begin{equation*}\label{klbound}
D_{KL}(\nu\|\widehat{\nu}) \leq \Big(K_1 \epsilon +K_2 \nu(\mathcal{M}^{\bot}(\epsilon))\Big)^2.
\end{equation*}
\end{theorem}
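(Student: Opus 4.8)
The plan is to work from the Radon--Nikodym derivative of $\nu$ with respect to $\widehat{\nu}$. Since both measures have densities proportional to $\exp(-\Phi)$ and $\exp(-\widehat{\Phi})$ against the common prior $\nu_0$, one obtains $\frac{\mathrm{d}\nu}{\mathrm{d}\widehat{\nu}} = \frac{\widehat{Z}}{Z}\exp\!\big(\widehat{\Phi}(m;y)-\Phi(m;y)\big)$, so that
\[
D_{KL}(\nu\|\widehat{\nu}) = \log\frac{\widehat{Z}}{Z} + \int_{\mathcal{M}}\big(\widehat{\Phi}(m;y)-\Phi(m;y)\big)\,\nu(\mathrm{d}m).
\]
Writing $\delta(m):=\Phi(m;y)-\widehat{\Phi}(m;y)$ and noting $\widehat{Z}/Z=\int_{\mathcal{M}}e^{\delta}\,\nu(\mathrm{d}m)$, this reduces to $D_{KL}(\nu\|\widehat{\nu}) = \log\int_{\mathcal{M}}e^{\delta}\,\mathrm{d}\nu - \int_{\mathcal{M}}\delta\,\mathrm{d}\nu$, i.e.\ the log-moment-generating functional of $\delta$ minus its mean. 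The whole estimate therefore reduces to controlling the single scalar field $\delta$ in a suitable integral norm.

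The second step is a pointwise estimate on $\delta$. By the polarization identity applied to the weighted norm,
\[
\delta(m)=\tfrac12\big\langle\, \widehat{\mathcal{G}}(m)-\mathcal{G}(m),\; 2y-\mathcal{G}(m)-\widehat{\mathcal{G}}(m)\,\big\rangle_{\Sigma_{\eta}},
\]
so Cauchy--Schwarz yields $|\delta(m)|\le \tfrac12\|\mathcal{G}(m)-\widehat{\mathcal{G}}(m)\|_{\Sigma_{\eta}}\,\|2y-\mathcal{G}(m)-\widehat{\mathcal{G}}(m)\|_{\Sigma_{\eta}}$. Assuming the forward map and its surrogate take values in a bounded set, the second factor is uniformly bounded by some $C$, and norm equivalence converts the defining inequality $\|\mathcal{G}(m)-\widehat{\mathcal{G}}(m)\|\le\epsilon$ on $\mathcal{M}(\epsilon)$ into a weighted bound. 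This gives $|\delta(m)|\le K_1\epsilon$ for $m\in\mathcal{M}(\epsilon)$ and $|\delta(m)|\le B$ for all $m$, with $K_1,B$ depending only on $C$ and $\Sigma_{\eta}$.

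Third, I would convert the log-MGF expression into a quadratic bound. Centering $\delta$ about its mean and using the elementary inequalities $\log(1+t)\le t$ and $e^{x}\le 1+x+\tfrac12 x^2 e^{|x|}$ for the bounded variable $\delta$ produces an estimate of the form $D_{KL}(\nu\|\widehat{\nu})\le C'\int_{\mathcal{M}}\delta^2\,\mathrm{d}\nu$, equivalently $\sqrt{D_{KL}(\nu\|\widehat{\nu})}\le\sqrt{C'}\,\|\delta\|_{L^2(\nu)}$. Splitting the integral over $\mathcal{M}(\epsilon)$ and $\mathcal{M}^{\bot}(\epsilon)$ and inserting the two pointwise bounds from the previous step controls the feasible-set part by a multiple of $\epsilon$ and the complement part by a multiple of $\nu(\mathcal{M}^{\bot}(\epsilon))$; collecting the two contributions and squaring delivers the stated form $\big(K_1\epsilon+K_2\nu(\mathcal{M}^{\bot}(\epsilon))\big)^2$.

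The hard part will be controlling the normalizing-constant ratio $\widehat{Z}/Z$ and, relatedly, pinning down the exact power of $\nu(\mathcal{M}^{\bot}(\epsilon))$ that appears inside the square. The quadratic dependence of $D_{KL}$ on the potential perturbation is precisely what forces the outer square, and the bookkeeping on the complement (where only a crude bound $|\delta|\le B$ is available) must be arranged so that its contribution enters linearly rather than through a weaker power; this is where boundedness of the potentials is essential, since it guarantees $\min(Z,\widehat{Z})$ is bounded below and hence that $\log(\widehat{Z}/Z)$ inherits the same control rather than degrading the estimate. The detailed constants and the reconciliation of the complement term follow the argument of~\cite{yan2019adaptive1}.
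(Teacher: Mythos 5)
First, note that the paper you are working from contains no proof of Theorem \ref{Theorem2.1} at all: it is quoted verbatim from \cite{yan2019adaptive1}, so the only comparison available is against that reference's argument. Your first two steps do track the standard route there: the identity $D_{KL}(\nu\|\widehat{\nu}) = \log \mathbb{E}_{\nu}[e^{\delta}] - \mathbb{E}_{\nu}[\delta]$ with $\delta = \Phi - \widehat{\Phi}$ is correct (and, incidentally, it makes your worry about $\widehat{Z}/Z$ a non-issue, since $\log(\widehat{Z}/Z) = \log\mathbb{E}_{\nu}[e^{\delta}]$ exactly), and the polarization/Cauchy--Schwarz pointwise bounds $|\delta|\le K_1\epsilon$ on $\mathcal{M}(\epsilon)$ and $|\delta|\le B$ globally are fine under the boundedness hypotheses you impose, which are indeed the assumptions made in the cited source.

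The genuine gap is in your third step, and it sits exactly at the one non-routine point of the theorem. The Jensen-gap estimate $D_{KL}\le C'\,\mathbb{E}_{\nu}[\delta^{2}]$ is valid for bounded $\delta$, but splitting the $L^{2}(\nu)$ norm over $\mathcal{M}(\epsilon)$ and $\mathcal{M}^{\bot}(\epsilon)$ gives the complement contribution $B\,\nu(\mathcal{M}^{\bot}(\epsilon))^{1/2}$, not a multiple of $\nu(\mathcal{M}^{\bot}(\epsilon))$; your route therefore delivers $D_{KL}\le \big(K_1\epsilon + K_2\sqrt{\nu(\mathcal{M}^{\bot}(\epsilon))}\big)^{2}$, which is strictly weaker than the stated bound when the complement mass is small. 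Your closing paragraph concedes that the complement must ``enter linearly'' but supplies no mechanism and instead defers the reconciliation to \cite{yan2019adaptive1} --- which is circular in a blind proof. Worse, boundedness alone cannot repair it: take $\delta = B$ on a set of $\nu$-measure $p$ and $\delta = 0$ elsewhere; then $D_{KL} = \log\big(1 + p(e^{B}-1)\big) - Bp \approx p\,(e^{B}-1-B)$, linear in $p$, whereas the stated right-hand side is of order $p^{2}$ once $\epsilon$ is negligible. So no refinement of the ingredients you have assembled (bounded $\delta$, smallness on the feasible set) can place $\nu(\mathcal{M}^{\bot}(\epsilon))$ linearly inside the square with uniform constants; obtaining the quoted form requires additional structure or constant conventions from the cited reference that your proposal neither identifies nor proves. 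As written, the proposal honestly establishes the square-root variant, and the step that distinguishes the theorem as stated remains open.
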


It is important to note that in order for the approximate posterior $\widehat{\nu}$  to converge to the exact posterior $\nu$, the posterior measure $\nu(\mathcal{M}^{\bot}(\epsilon))$ must tend to zero.  One way to achieve this goal is to enable the surrogate model  $\widehat{\mathcal{G}}$  trained sufficiently  in the entire input space such that the model error is small enough.   However, a significant amount of data and training time are frequently required  to effectively train the surrogate model. Indeed, the surrogate model only needs to be accurate within the posterior distribution space, not the entire prior space\cite{cao2023residual,yan2020adaptive,yan2019adaptive1}.   To maintain accurate results while lowering the computational costs, an adaptive algorithm should be developed.  In the following section, we will look at how to design a framework for adaptively reducing the surrogate's modeling error.  

\section{Adaptive operator learning framework}
\label{section_adaptive_model}

\subsection{Adaptive model error reduction}

Developing stable and reliable adaptive surrogate modeling methods presents several challenges, especially when dealing with infinite-dimensional Bayesian inverse problems.  One major challenge lies in the need to maintain the accuracy of the surrogate model in the high-density regions of the posterior distribution, where the true posterior is most concentrated.  According to Theorem \ref{Theorem2.1}, the approximate posterior will be close to the true posterior when the surrogate is accurate in the high density region of the posterior distribution. On the other hand,  the accuracy of the surrogate model, especially in the context of an operator network like DeepOnet, depends heavily on the quality and distribution of the training set.  Ideally, this training set should be well-distributed across the posterior distribution in order to capture the key characteristics of the problem.  However, the high density region of the posterior distribution is unknown until the observations are given. One feasible approach, as discussed earlier, is the CES framework \cite{cleary2021calibrate}, which first involves a calibration step where posterior samples are obtained using the full-order model, and then these samples can be  used to train the surrogate. Nonetheless, this strategy  can be very computationally expensive, especially when new observational data emerges, as the entire process must be repeated. A natural question is how to create an effective local approximation that maintains a good balance between accuracy and computational cost.  To achieve this, the new adaptive framework should be  designed to automatically select new training points during the posterior exploration process. These selected points are then used to fine-tune the emulator generated by DeepOnet. Instead of relying on the full-order model to explore the posterior space initially, the training points for the emulator are sequentially updated by integrating samplers that utilize the emulator itself. Based on this purpose, we separate the adaptive algorithms into offline and online stages. Offline, a small amount of samples from the prior distribution are utilized to train the DeepOnet in an acceptable length of time. The purpose of offline computation is to obtain a rough but comprehensive approximation of the DeepOnet model.  The online stage consists of two major steps. First, decide whether to adaptively update the surrogate model with the posterior computation algorithm. Second, if the surrogate model needs to be updated, choose a new training set locally.  Our new approach is an online strategy that depends on a specific realization of the observed data and an associated posterior approximation, in contrast to the direct DeepOnet strategy which maintains the surrogate model $\widehat{\mathcal{G}}$ unchanged. We expect that the inversion results computed by adaptive DeepOnet will have better accuracy than those produced by the offline direct DeepOnet strategies because using the data centers attention on regions of high posterior probability and causes a localization effect in the construction of surrogate. We will explore this conjecture in numerical results below.
 
 \begin{figure}[t]
    \centering
    \includegraphics[width = 0.8\textwidth]{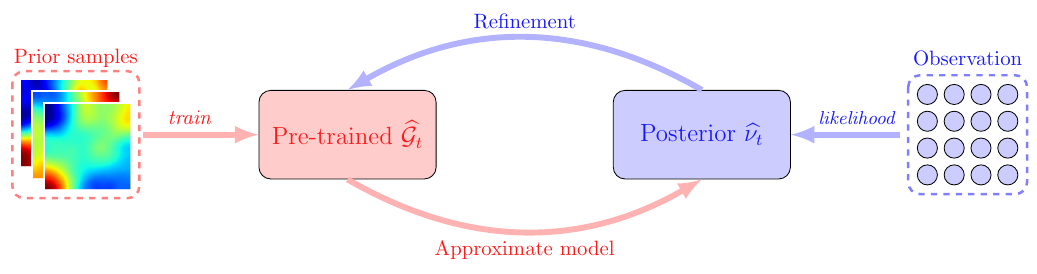}
    \caption{The flowchart for our mutual learning framework. The framework begins with an offline, pre-trained operator network. The operator network combines with observations to efficiently return likelihood information for samplers to explore the parameter space, while the samplers provide useful candidate points for surrogate refinement. They can learn from and improve each other over time. }
    \label{Flowchart_surroagte}
\end{figure}

 Our approach, as depicted in Fig.\ref{Flowchart_surroagte} is indeed the summary of the previous efforts. The procedure is broken down into the following steps, which are modular in nature and can be approached in various ways:

\begin{itemize}
    \item Initialization (offline): Build a surrogate $ \mathcal{F}_{\theta}$ using the initial training dataset $\mathcal{D}$ with a relatively small sample size. This model is used as the initial pre-trained model. 
    \item Posterior computation:  Using some numerical techniques to approximate, or draw samples from the approximate posterior $\widehat{\nu}$ induced by $\widehat{\mathcal{G}}=\mathcal{O}\circ  \mathcal{F}_{\theta}$. 
    \item Refinement (online): Choose a  criteria to determine whether the refinement is needed. If refinement is needed, then selecting new training points from $\widehat{\nu}$ to refine the training dataset $\mathcal{D}$ and the surrogate $\mathcal{F}_{\theta}$.
    \item Repeat the above procedure several times until the stop criteria is met.
\end{itemize}

The rest of this part turns this outline into a workable algorithm by explaining when and where to refine, as well as how to choose new training data to improve approximations.  To create an initialization surrogate, we can start by creating a small training dataset $\mathcal{D}$ from the prior distribution.  In detail, suppose we have a collection of model evaluations $\mathcal{D} = \{(m_{i}, \mathcal{F}(m_{i}))\}$. We then can use these points to train an operator network $\mathcal{F}_{\theta}$ and obtain a surrogate $\widehat{\mathcal{G}}=\mathcal{O}\circ \mathcal{F}_{\theta}$,  which can be evaluated repeatedly at negligible cost. Subsequently, we decide where and when to refine the surrogate by conducting exploration using the current surrogate. Specifically, assume that $\widehat{\nu}_{t}$ is the approximate posterior at $t$ step.  To assess the accuracy of the surrogate model in the approximate posterior space, we define the following ``local" model error assist with $\widehat{\nu}_{t}$:
\begin{equation}\label{localerr}
    e_{\mathcal{M}}(t) := \mathbb{E}\left[\mathcal{G} - \mathcal{\widehat{G}}_{t}\right] = \left(\int_{\mathcal{M}}|\mathcal{G}(m) - \mathcal{\widehat{G}}_{t}(m)|^{2}\widehat{\nu}_{t}(\textcolor{black}{\mathrm{d}m})\right)^\frac{1}{2}.
\end{equation}
A natural approach is to use this error as an indicator: if the error exceeds a predefined tolerance, the surrogate model should be refined. Unfortunately, the need for high-dimensional integration makes directly evaluating this error in high-dimensional spaces prohibitively expensive. Notice that our primary goal is to maintain the surrogate accurate along the posterior computation trajectories.  To achieve this, we can now obtain $T$ samples $\mb{M}^{(t)}:=\{m_k^{(t)}\}_{k=1}^{T}$ from  $\widehat{\nu}_{t}$ using a posterior computation method, such as particle methods or MCMC-based sampling methods. Importantly, this process only relies on information from the surrogate model  $\mathcal{\widehat{G}}_{t}$  and the data  $y$, without requiring any information from the full-order model  $\mathcal{G}$. To prevent the current samples from deviating too far from the true posterior trajectory, we define an anchor point  $r^t$  from the obtained samples $\mb{M}^{(t)}$ using the full-order model $\mathcal{G}$ as follows:
\begin{equation}
r^t = \arg\min_{m\in \mb{M}^{(t)}}\frac{1}{2}\|y - \mathcal{G}(m)\|_{\Sigma_{\eta}}^{2}.
\end{equation}
This allows us to define an error indicator  $e_{\mathcal{D}}(t)$  based on the data-fitting term at the anchor point  $r^t$  within the posterior sample set:
 \begin{equation}
        \label{data_fitting_error}
        e_{\mathcal{D}}(t):= \Phi(r^t;y) = \frac{1}{2}\|y - \mathcal{G}(r^t)\|_{\Sigma_{\eta}}^{2}. 
    \end{equation}
 This error indicator not only measures the model error but also ensures that the samples do not deviate significantly from the true posterior trajectory. When the relative error $|\frac{e_{\mathcal{D}}(t) - e_{\mathcal{D}}(t-1)}{e_{\mathcal{D}}(t)}|$   exceeds   a tolerance of $\epsilon$, the surrogate must be refined in $\widehat{\nu}_t$. Otherwise, the refinement process stops. 

The following task is to develop  sampling strategies for surrogate refinement. We only need to ensure that the surrogate is accurate along the \textcolor{black}{posterior evaluation} trajectories, so we can generate a small set of important (adaptive) points near the anchor point and then refine the surrogate.  This ensures that the refined surrogate $\widehat{\mathcal{G}}_{t}$ is locally accurate in $\widehat{\nu}_{t}$ while also reducing computational costs. This topic is relevant to optimal experimental design. A number of existing strategies can be used to accomplish this goal. In this work, we propose a greedy algorithm for efficiently selecting the most important samples from $\widehat{\nu}_{t}$.   Specifically, we first draw a large set of $K$ samples $\Gamma = \{m_{1},m_{2},\cdots,m_{K}\}$ from $\widehat{\nu}_t$.  A greedy algorithm is then used to select a subset $\gamma^{Q} = \{\widetilde{m}_{1},\widetilde{m}_{2},\cdots, \widetilde{m}_{Q}\}\subset \Gamma$ of ``important" points one by one from $\Gamma$. Assuming the current selected point sets are $\gamma^{j}(j<Q)$, then the newly selected point $\widetilde{m}_{j+1}$ must be near the anchor point $r^t$. Furthermore, the surrogate solution for this point should have the greatest distance with the set $\widehat{\mathcal{G}}_t^j:=\{\widehat{\mathcal{G}}_t(\widetilde{m}_{i})\}_{i=1}^{j}$, 
i.e., \begin{equation}\label{upeq}
   \widetilde{m}_{j+1} =  \arg \max_{m\in \Gamma/\gamma^{j}} \left(d\big(\widehat{\mathcal{G}}_t(m), \widehat{\mathcal{G}}_t^j\big)-\lambda \|m - r^{t}\|_{2}\right),
 \end{equation}
 where $d\big(\widehat{\mathcal{G}}_t(\cdot), \widehat{\mathcal{G}}_t^j\big) = \max_{\tilde{m}\in \gamma^{j}} \left\|\widehat{\mathcal{G}}_t(\cdot) - \hat{\mathcal{G}}_t(\tilde{m})\right\|_{2}$ is the distance between the value $\widehat{\mathcal{G}}_t(\cdot)$ and the set $\widehat{\mathcal{G}}_t^j$. Here $\lambda$ is the factor used to control the balance between these two distances and is chosen to be 1 for convenience. The process only calculates the distance between points and predicted values of the surrogate model on the preselected data set, resulting in a negligible calculation time.  Then, we have $\gamma^{j+1} = \gamma^{j}\cup \{\widetilde{m}_{j+1}\}$.  We can improve the DeepOnet model $\mathcal{F}_{\theta}$ by collecting the new training set $\gamma^{Q}$. Specifically, during the online training phase, we initialize the neural network parameter from the previously trained model $\mathcal{F}_{\theta}$.  We expect a significant speedup in solving Eq. (\ref{loss_function}) with this initialization, which can be viewed as an example of transfer learning.
 
The advantages of this method are obvious. Specifically, the selected adaptive points will have varying features in the surrogate solution spaces and will be close to the anchor point $r^t$. This is more of an adversarial process; in order to guarantee local accuracy, we want the new points to be close to the $r^t$. In order to enhance generalization capacity, we also want them to incorporate as much data as the current surrogate did. The only remaining question is how to approximate the approximate posterior distribution $\widehat{\nu}_{t}$. To this end, traditional MCMC-based sampling methods and particle-based approaches\cite{Chen2019projected,Detommaso2018stein,Garbuno2020interacting,Liu2016stein,yan2021stein} can be applied. In this paper, we have chosen to exclusively focus on the Unscented Kalman Inversion (UKI) method \cite{huang2022iterated} in order to conceptually validate that our proposed adaptive operator learning framework.  The details of the UKI algorithm will be presented in the following section. It is important to emphasize that UKI’s strategy relies on Gaussian approximations. While UKI may not provide highly accurate posterior approximations in many cases, such as non-Gaussian posteriors, we have still chosen to use UKI for sampling due to its computational efficiency. As a gradient-free, particle-based method, UKI requires only $2N_m+1$ full-order model evaluations per iteration and typically converges within $\mathcal{O}(10)$ iterations, making it computationally less expensive compared to MCMC-type methods. The numerical examples will demonstrate that even with the lower computational complexity of UKI, our newly designed framework still achieves significant improvements in computational efficiency. Additionally, our algorithm is easily integrated with MCMC or other particle-based methods\cite{Chada2021,carrillo2022consensus,ernst2015analysis,huang2022efficient,Garbuno2020interacting,iglesias2013ensemble,wang2023adaptive,Weissmann_2022,yan2019adaptive}, and when used with other samplers that require a larger number of samples, our framework yields even greater computational efficiency gains.

\subsection{Unscented Kalman Inversion}

In this section, we give a brief review of the UKI algorithm discussed in \cite{huang2022iterated}. 
The UKI is derived within the Bayesian framework and is considered to approximate the posterior distribution using Gaussian approximations on the random variable $m|y$ via its ensemble properties.  We consider  the following stochastic dynamical system:
 \begin{equation}
     \label{UKI}
     \begin{split}
         &\text{Evolution:}\quad\quad m_{n+1}=r_{0}+\alpha(m_{n}-r_{0})+ \omega_{n+1},\quad\quad \omega_{n+1}\sim\mathcal{N}(0,\Sigma_{\omega}),\\ 
         &\text{Observation:}\quad \quad y_{n+1}=\mathcal{G}(m_{n+1})+\eta_{n+1},\quad\quad \quad\quad\quad\eta_{n+1}\sim\mathcal{N}(0,\Sigma_{\eta}).
     \end{split}
 \end{equation}
 where $m_{n+1}$ is the unknown discrete parameter vector, and $y_{n+1}$ is the observation vector, the artificial evolution error $\omega_{n+1}$ and observation error $\eta_{n+1}$ are mutually independent, zero-mean Gaussian sequences with covariances $\Sigma_{\omega}$ and $\Sigma_{\eta}$, respectively. Here $\alpha\in(0,1]$ is the regularization parameter, $r_{0}$ is the initial arbitrary vector. 
 
Let $Y_{n} := \{y_{1}, y_{2},\cdots, y_{n}\}$ denote the observation set at time $n$. In order to approximate the conditional distribution $\nu_{n}$ of $m_{n}|Y_{n}$, the iterative algorithm starts from the prior $\nu_{0}$ and updates $\nu_{n}$ through the prediction and analysis steps: $\nu_{n}\rightarrow \widetilde{\nu}_{n+1}$, and then $\widetilde{\nu}_{n+1}\rightarrow \nu_{n+1}$, where $\widetilde{\nu}_{n+1}$ is the distribution of $m_{n+1}|Y_{n}$.  In the prediction step, we assume that $\nu_{n}= \mathcal{N}(r_{n}, C_{n})$, then under Eq. \eqref{UKI}, $\widetilde{\nu}_{n+1}$ is also Gaussian with mean and covariance:
 \begin{equation}
     \label{prediction}
     \begin{split}
         &\widehat{r}_{n+1}=\mathbb{E}[m_{n+1}|Y_{n}]=\alpha r_{n}+(1-\alpha)r_{0}, \\
         &\widehat{C}_{n+1}={\mathrm{Cov}}[m_{n+1}|Y_{n}]=\alpha^{2}C_{n}+\Sigma_{\omega}.
     \end{split}
 \end{equation}
 In the analysis step, we assume that joint distribution of $\{m_{n+1}, y_{n+1}\}|Y_{n}$ can be approximated by a Gaussian distribution 
 \begin{equation}
     \label{condition}
     \mathcal{N}\left(
         \left[
         \begin{array}{c}
             \widehat{r}_{n+1}\\ 
             \widehat{y}_{n+1}
         \end{array}
         \right],
         \left[
             \begin{array}{cc}
                 \widehat{C}_{n+1}&\widehat{C}^{my}_{n+1}\\ 
                 {\widehat{C}_{n+1}^{my\; T}}&\widehat{C}_{n+1}^{yy}
             \end{array}
         \right]
     \right),
 \end{equation}
 where 
 \begin{equation}
     \label{analysis}
 \begin{array}{l}{{\widehat{y}_{n+1}=\mathbb{E}[{\mathcal G}(m_{n+1})|Y_{n}],}}\\ {{\widehat{C}_{n+1}^{my}=\operatorname{Cov}[m_{n+1},{\mathcal G}(m_{n+1})|Y_{n}],}}\\ {{\widehat{C}_{n+1}^{yy}=\operatorname{Cov}[{\mathcal G}(m_{n+1})|Y_{n}]+\Sigma_{\eta}.}}
 \end{array}
 \end{equation}
 Conditioning the Gaussian in Eq.\eqref{condition} to find $m_{n+1}|\{Y_{n}, y_{n+1}\}=m_{n+1}|Y_{n+1}$ gives the following expressions for the mean $r_{n+1}$ and covariance $C_{n+1}$ of the approximation to $\nu_{n+1}$:
  \begin{equation}
     \label{update}
     \begin{array}{l l}
         {{r_{n+1}=\widehat{r}_{n+1}+\widehat{C}_{n+1}^{my}(\widehat{C}_{n+1}^{yy})^{-1}(y_{n+1}-\widehat{y}_{n+1}),}}\\ {{C_{n+1}=\widehat{C}_{n+1}-\widehat{C}_{n+1}^{my}(\widehat{C}_{n+1}^{yy})^{-1}\widehat{C}_{n+1}^{my}}}.
     \end{array}
  \end{equation}
  By assuming all observations are identical to $y$ (i.e., $Y_{n} = y$), Eqs.\eqref{prediction}-\eqref{update}  define a conceptual algorithm for using Gaussian approximation to solve BIPs.  To evaluate integrals appearing in Eq. \eqref{prediction}, UKI employs the unscented transform described below.
  
\begin{theorem}[Modified Unscented Transform \cite{wan2000unscented}]
     Let Gaussian random variable $m\sim\mathcal{N}(r, C)\in \mathbb{R}^{N_{m}}$, $2N_{m} + 1$ symmetric $\sigma$-points are chosen deterministically:
     \begin{equation*}
         \label{sigma_points}
         m^{0}=r,\quad m^{j}=m+c_{j}[\sqrt{C}]_{j}, \quad m^{j+N_{m}}=m-c_{j}[\sqrt{C}]_{j},\quad(1\leq j\leq N_{m}),
     \end{equation*}
 where $[\sqrt{C}]_{j}$ is the $jth$ column of the Cholesky factor of $C$. The quadrature rule approximates the mean and covariance of the transformed variable $\mathcal{G}_{i}(m)$ as follows 
 \begin{equation*}
     \label{unscented_transform}
     \begin{split}
     &\mathbb{E}[\mathcal{G}_{i}(\theta)]\approx \mathcal{G}_{i}(m_{0})=\mathcal{G}_{i}(r),\\
     &Cov[{\mathcal{G}_{1}}(m),\mathcal{G}_{2}(m)]\approx\sum_{i=1}^{2N_{m}}W_{j}^{c}\left(\mathcal{G}_{1}(m^{j})-\mathbb{E}{\mathcal{G}_{1}}(m)\right)\left(\mathcal{G}_{2}(m^{j})-\mathbb{E}{\mathcal{G}_{2}}(m)\right)^{T}.
     \end{split}
 \end{equation*}
 Here these constant weights are 
 \begin{equation*}
     \label{weights}
     \begin{split}
         &c_{1} =c_{2}\cdots =c_{N_{m}} = \sqrt{N_{m}+\lambda},\quad W_{1}^{c} = W_{2}^{c} = \cdots = W_{2N_{m}}^{c} = \frac{1}{2(N_{r}+\lambda)},\\ 
         &\lambda = a^{2}(N_{m} +\kappa) -N_{m},\quad \kappa = 0,\quad a = \min\{\sqrt{\frac{4}{N_{m}+\kappa}}, 1\}.
     \end{split}
 \end{equation*}
  \end{theorem}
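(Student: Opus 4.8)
The statement is a claim about the accuracy of a deterministic quadrature rule: the symmetric $\sigma$-points are constructed so that the weighted ensemble $\{(W_j^c, m^j)\}$ reproduces the first two moments of $\mathcal{N}(r,C)$, and the transformed ensemble then reproduces the mean and covariance of $\mathcal{G}(m)$ up to the leading Taylor order. The plan is therefore to (i) verify the moment-matching property of the $\sigma$-points directly from the definitions of $c_j$ and $W_j^c$, and then (ii) compare the Taylor expansions of the exact moments of $\mathcal{G}(m)$ against those produced by the quadrature, using the moment-matching identity to cancel the leading terms.

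First I would establish the two moment identities
\begin{equation*}
\sum_{j=1}^{2N_m} W_j^c\,(m^j - r) = 0, \qquad \sum_{j=1}^{2N_m} W_j^c\,(m^j - r)(m^j - r)^{T} = C.
\end{equation*}
The first is immediate from the symmetric pairing $m^{j+N_m}-r = -(m^j - r)$. For the second, I would substitute $m^j - r = c_j[\sqrt{C}]_j$ and $m^{j+N_m}-r = -c_j[\sqrt{C}]_j$, so that each pair contributes $2 W_j^c c_j^2 [\sqrt{C}]_j[\sqrt{C}]_j^{T}$; inserting $c_j^2 = N_m+\lambda$ and $W_j^c = \tfrac{1}{2(N_m+\lambda)}$ collapses the prefactor to $1$, and $\sum_{j=1}^{N_m}[\sqrt{C}]_j[\sqrt{C}]_j^{T} = \sqrt{C}\,\sqrt{C}^{T} = C$ by the definition of the Cholesky factor.

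Next I would treat the two approximations by Taylor expansion about the mean. Writing $\delta := m - r \sim \mathcal{N}(0,C)$ and expanding $\mathcal{G}_i(r+\delta) = \mathcal{G}_i(r) + \nabla\mathcal{G}_i(r)\,\delta + \tfrac12\,\delta^{T}\nabla^2\mathcal{G}_i(r)\,\delta + \cdots$, the linear term has zero Gaussian mean, so $\mathbb{E}[\mathcal{G}_i(m)] = \mathcal{G}_i(r) + O(\|C\|)$, which justifies the center-point mean estimate $\mathcal{G}_i(r)$ to first order (and exactly for affine $\mathcal{G}_i$). For the covariance, I would linearize both arguments as $\mathcal{G}_k(m^j) - \mathcal{G}_k(r) \approx J_k(m^j - r)$ with $J_k = \nabla\mathcal{G}_k(r)$; substituting into the quadrature sum and applying the second moment identity gives $\sum_j W_j^c (\mathcal{G}_1(m^j)-\mathcal{G}_1(r))(\mathcal{G}_2(m^j)-\mathcal{G}_2(r))^{T} \approx J_1 C J_2^{T}$, which is precisely the leading-order expansion of the true cross-covariance $\mathrm{Cov}[\mathcal{G}_1(m),\mathcal{G}_2(m)]$. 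Hence the quadrature reproduces the covariance to the same leading order, and the moment identities above do the algebraic heavy lifting.

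The main obstacle is making the symbol ``$\approx$'' precise, i.e. pinning down the order of the error. The $\sigma$-point ensemble matches the Gaussian mean and covariance exactly but not its higher central moments; because of the $\pm$ pairing all odd ensemble moments vanish and so agree with the Gaussian, which means the first genuine discrepancy appears only at the fourth moment. Consequently the delicate part of the argument is the remainder bookkeeping: one must show the cubic Taylor terms cancel by symmetry on both the exact and the quadrature sides, and then bound the quartic mismatch in terms of the higher derivatives of $\mathcal{G}_i$ and the deviation of the ensemble fourth moment from the Gaussian one. This step requires a smoothness hypothesis on $\mathcal{G}$ (at least $C^2$, more naturally $C^3$ or $C^4$) and is where the stated accuracy order is actually earned, leaving the remainder estimate as the only analytically subtle ingredient.
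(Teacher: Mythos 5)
The paper offers no proof of this statement: it is imported as background, with a citation to \cite{wan2000unscented}, and is used purely as a quadrature rule inside the UKI algorithm, so there is no internal argument to compare yours against. Judged on its own merits, your sketch is the standard justification of the unscented transform and is sound. The moment-matching identities are exactly right: each symmetric pair contributes $2W_j^c c_j^2\,[\sqrt{C}]_j[\sqrt{C}]_j^{T}$ with $2W_j^c c_j^2 = 1$, and $\sum_{j=1}^{N_m}[\sqrt{C}]_j[\sqrt{C}]_j^{T} = \sqrt{C}\,\sqrt{C}^{T} = C$, after which the Taylor comparison yields first-order (affine-exact) mean accuracy and the leading-order covariance $J_1 C J_2^{T}$. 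You also correctly spotted the feature that makes this transform ``modified'': the mean is estimated by the center point alone rather than a weighted sigma-point average, so it forfeits the classical UT's second-order mean accuracy --- this is consistent with how the paper uses it in Algorithm 1, where $\widehat{y}_{n+1} = \mathcal{G}(\widehat{r}_{n+1})$. Your remainder bookkeeping (odd-order terms cancel on both sides by the $\pm$ pairing, the first genuine mismatch is between the ensemble and Gaussian fourth moments, and pinning down the order requires $C^3$/$C^4$ smoothness of $\mathcal{G}$) is precisely the delicate step the ``$\approx$'' glosses over, and it matches the accuracy analysis in the cited literature; note that with $\kappa=0$ and $a=\min\{\sqrt{4/N_m},1\}$ one has $N_m+\lambda = a^2 N_m \le 4$, which fixes the sigma-point spread and hence the size of that fourth-moment discrepancy. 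Finally, be aware that you silently corrected several typos in the printed statement ($m^j = m + c_j[\sqrt{C}]_j$ should read $r + c_j[\sqrt{C}]_j$, $N_r$ should be $N_m$, and the covariance sum mixes the indices $i$ and $j$); your reading is the intended one.
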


We obtain the following UKI algorithm in Algorithm \ref{Algorithm_uki} by applying the aforementioned quadrature rules.  UKI is a derivative-free algorithm that applies a Gaussian approximation theorem iteratively to transport a set of particles to approximate given distributions. As a result, it only needs $2N_{m}+1$ model evaluations per iteration, making it simple to implement and inexpensive to compute. However, for highly-nonlinear problems, UKI may encounter the intrinsic difficulty of using Gaussian distributions to approximate the posterior and different samplers can be applied.

  \begin{algorithm}[th]
     \caption{Unscented Kalman Inversion (UKI)} 
     \label{Algorithm_uki}
     \begin{algorithmic}[1]
         \Procedure{RunUki}{$r_{0},C_{0}, y, \mathcal{G}, \Sigma_{\omega}, \Sigma_{\eta}, T$}
         \For{$n = 0,\ldots, T-1$}
         \State Prediction step:
         \begin{equation*}
             \widehat{r}_{n+1}=\alpha r_{n}+(1-\alpha)r_{0},\widehat{C}_{n+1}=\alpha^{2}C_{n}+\Sigma_{\omega}.
         \end{equation*}
         \State Generate $\sigma$-points 
         \begin{equation*}
             \begin{split}
                 &\widehat{m}^{0}_{n+1}=\widehat{r}_{n+1},\\ 
                 &\widehat{m}^{j}_{n+1}=\widehat{r}_{n+1}+c_{j}[\sqrt{\widehat{C}_{n+1}}]_{j},\quad (1\leq j\leq N_{m}), \\
                 &\widehat{m}^{j+N_{m}}_{n+1}=\widehat{r}_{n+1}-c_{j}[\sqrt{\widehat{C}_{n+1}}]_{j},\quad(1\leq j\leq N_{m}).
             \end{split}
         \end{equation*}
         \State Analysis step:
         \begin{equation*}
             \begin{split}
                 &\widehat{s}_{n+1}^{j}=\mathcal{G}(\widehat{m}_{n+1}^{j}),\qquad\widehat{y}_{n+1}=\widehat{y}_{n+1}^{0},\\ 
                 &\widehat C_{n+1}^{my}=\sum_{j=1}^{2N_{m}}W_{j}^{c}(\widehat m_{n+1}^{j}-\widehat r_{n+1})(\widehat y_{n+1}^{j}-\widehat y_{n+1})^{T},\\ 
                 &\widehat{C}_{n+1}^{yy}=\sum_{j=1}^{2N_{m}}W_{j}^{c}(\widehat{y}_{n+1}^{j}-\widehat{y}_{n+1})(\widehat{y}_{n+1}^{j}-\widehat{y}_{n+1})^{T}+\Sigma_{\eta},\\ 
                 &r_{n+1}=\widehat{r}_{n+1}+\widehat{C}_{n+1}^{my}(\widehat{C}_{n+1}^{yy})^{-1}(y-\widehat{y}_{n+1}),\\ 
                 &C_{n+1}=\widehat{C}_{n+1}-\widehat{C}_{n+1}^{my}(\widehat{C}_{n+1}^{yy})^{-1}\widehat{C}_{n+1}^{my}.
             \end{split}
         \end{equation*}
         \EndFor
         \EndProcedure
     \end{algorithmic}
  \end{algorithm}

\subsection{Algorithm Summary}

\begin{algorithm}[t]
    \caption{UKI with  DeepOnet approximation}
    \label{algorithm_summary}
    \begin{algorithmic}[1]
        \Procedure{RunLUki}{$\mathcal{D}, \mathcal{F}, \mathcal{F}_{\theta},y, r_{0}, C_{0}, \Sigma_{\omega}, \Sigma_{\eta}, T, I_{\max},\epsilon$}
        \State Set $\widehat{\mathcal{G}}_{0}=\mathcal{O}\circ  \mathcal{F}_{\theta}$, and compute $e_0$ in Eq.\eqref{data_fitting_error} with respect to $r_0$ and $y$.
        \For{$t = 0,\ldots,I_{max} - 1$}       
        \State $\{(r_{t+1}^{n}, C_{t+1}^{n})\}_{n=1}^{T}\leftarrow$ \Call{RunUki}{$r_{t}, C_{t}, y, \widehat{\mathcal{G}}_{t}, \Sigma_{\omega}, \Sigma_{\eta}, T$} 
        \State $J \leftarrow \arg\min_{n} \{e^n_{t+1}=\frac{1}{2}\|y - \mathcal{G}(r^n_{t+1})\|_{\Sigma_{\eta}}^{2}\}$
        \State $(r_{t+1}, C_{t+1}, e_{t+1})\leftarrow (r_{t+1}^{J}, C_{t+1}^{J}, e_{t+1}^{J})$
        \If{$|(e_{t}  -e_{t+1})/e_{t+1}|>\epsilon$}
        \State ($\mathcal{F}_{\theta}, \mathcal{D})\leftarrow$\Call{RefineApprox}{$r_{t+1}, C_{t+1}, \mathcal{D},\mathcal{F}, \mathcal{F}_{\theta}$}
        \State Set $\widehat{\mathcal{G}}_{t+1} = \mathcal{O}\circ \mathcal{F}_{\theta}$
        \Else
        \State Break;
        \EndIf
        \EndFor
        \State \textbf{Return} the final inversion result $r_{t+1}, C_{t+1}$ 
        \EndProcedure
        \State
        \Procedure{RefineApprox}{$r, C, \mathcal{D},\mathcal{F}, \mathcal{F}_{\theta}$}
        \State Sample $\Gamma = \{m_{j}\}_{j=1}^{K}\sim \mathcal{N}(r, C)$.
        \State Choose new $Q$ samples $\widetilde{m}_j$ using Eq.\eqref{upeq}, and replace $\mathcal{D} \leftarrow \mathcal{D} \cup\{\widetilde{m}_j, \mathcal{F}(\widetilde{m}_j)\}$
        \State Update the operator network $\mathcal{F}_{\theta}$ using the training set $\mathcal{D}$
        \State \textbf{return} the surrogate $  \mathcal{F}_{\theta}$ and the training set $\mathcal{D}$
        \EndProcedure
    \end{algorithmic}
 \end{algorithm} 
 
The overview of our UKI-based adaptive operator learning strategy is provided by Algorithm \ref{algorithm_summary}. To summarize, we begin with an offline pre-trained DeepOnet and refine the surrogate until the stop criteria is satisfied using local training data from the current approximate posterior distribution obtained by UKI. In other words, the UKI provides the DeepOnet with valuable candidate training points to refine,  while the DeepOnet efficiently returns approximate data misfit information for the UKI to explore the parameter space further.  Together, they establish a mutual learning system that enables them to grow and learn from one another over time. In particular, in order to approximate the induced posterior distribution $\widehat{\nu}_{t}$ and produce a series of intermediate Gaussian approximations $\mathcal{N}(r_{t+1}^n, C_{t+1}^n),\,\, n= 1,\cdots, T$, we first perform UKI with $T$ steps during the refinement process. The least squared error $e_{t+1}^{n}:=\Phi(r_{t+1}^{n};y)$ with $r_{t+1}^{n}$ will be calculated using Eq.\eqref{least-sqaured-error} to form $U_{t+1} = \{(r_{t+1}^{n}, C_{t+1}^{n}, e_{t+1}^{n})\}_{n=1}^{T}$. It should be noted that not every step in this process is valid because the model error will eventually blow up during the iteration process.  As a result, if the refinement process is terminated, we will select the last valid inversion result from this set as:
\begin{equation*}
    \label{data-fitting-error}
    (r_{t+1}, C_{t+1}, e_{t+1}) := (r_{t+1}^{J}, C_{t+1}^{J}, e_{t+1}^{J}) = \arg\min_{(r, C, e)\in U_{t+1}}\frac{1}{2}\|y - \mathcal{G}(r)\|_{\Sigma_{\eta}}^{2}.
\end{equation*} 
If the surrogate requires more refinement, the pair $(r_{t+1}, C_{t+1})$ is used to select new training points and serve as the initial vector for the subsequent UKI iteration. 

We review the computational efficiency of our method.  Since the pre-trained operator network can be applied as surrogates for a class of various BIPs with models that are governed by the same PDEs but have various types of observations and noise modes. Thus, for a given inversion task, the main computational cost centers on the online forward evaluations and the online fine-tuning. On the other hand, the online retraining only takes a few seconds each time, so it can be ignored in comparison to the forward evaluation time. In these situations, the forward evaluations account for the majority of the computational cost. For our algorithm, the maximum number of online high-fidelity model evaluations is $N_{DeepOnet} = (Q+T)I_{max}$, where $Q$ is the number of adaptive samples for refinement, $T$ is the maximum number of iterations for UKI using our approach, and $I_{max}$ is the number of adaptive refinement. While $N_{\tiny{FEM}} = (2N_{m}+1)T_{\tiny{FEM}}$ represents the total evaluations for UKI using the FEM solver, $N_{m}$ represents the discrete dimension of the parameter field, and $T_{FEM}$ represents the maximum iteration number for UKI. Consequently, the asymptotic speeds increase can be computed as 
 \begin{equation*}
    SpeedUp = \frac{(2N_{m}+1)T_{\tiny{FEM}}}{(Q+T)I_{max}}.
 \end{equation*}
Note that the efficiency of our method basically depends on $Q, I_{max}$ since $T$ is typically small. First of all, the number of adaptive samples $Q$ will be sufficiently small (e.g. $\mathcal{O}(10)$) compared to the discrete dimension $N_{m}$ (e.g. $\mathcal{O}(10^{2})-\mathcal{O}(10^{3})$), resulting in a significant reduction in computational cost as it determines the number of forward evaluations.  Second, the total number of adaptive retraining $I_{max}$ is determined by the inverse tasks, which are further  subdivided into \textit{in-distribution} data (IDD)  and \textit{out-of-distribution} (OOD) cases.  The IDD  typically refers to ground truth that is located in the high density region of the prior distribution. Alternatively, the OOD refers to the ground truth that is located far from the high density area of the prior. 
The original pre-trained surrogate  can be accurate in nearly all of the high probability areas for the IDD case. As a result, our framework will converge quicker. In the case of OOD, our framework requires a significantly higher number of retraining cycles in order to reach the high density area of the posterior distribution. However, $I_{max}$ for both inversion tasks can be small. Consequently, our method can simultaneously balance accuracy and efficiency and has the potential to be applied to dynamical inversion tasks. In other words, once the initial surrogate is trained, we can use our adaptive framework to modify the estimate at a much lower computational cost.

 \subsection{Convergence analysis under the linear case} 
 It is important to note that the UKI's ensemble properties are used to approximate the posterior distribution using Gaussian approximations. Specifically, the sequence in Eq.\eqref{update} obtained by the full-order model $\mathcal{G}$ will converge to the equilibrium points of the following equations under certain mild conditions in the linear case \cite{huang2022iterated}:
\begin{subequations}
    \label{converge}
    \begin{align}
        &C_{\infty}^{-1}=\mathcal{G}^{T}\Sigma_{\eta}^{-1}\mathcal{G}+(\alpha^{2}C_{\infty}+\Sigma_{\omega})^{-1},\label{converge_a}\\ 
        &C_{\infty}^{-1}r_{\infty}=\mathcal{G}^{T}\Sigma_{\eta}^{-1}y+(\alpha^{2}C_{\infty}+\Sigma_{\omega})^{-1}\alpha r_{\infty}\label{converge_b}.
    \end{align}
\end{subequations}
We can actually demonstrate that in the linear case, the mean vector and covariance matrix obtained by our approach will be close to those obtained by the true forward if the surrogate $\widehat{\mathcal{G}}$ is close to the true forward $\mathcal{G}$.  
Consider the following: $\Range(\widehat{\mathcal{G}}) = \mathbb{R}^{N_{m}}$, and $\widehat{\mathcal{G}}$ is linear. Using $\widehat{\mathcal{G}}$ as a surrogate, the corresponding sequence of $r_{n}, C_{n}$ in Eq.\eqref{update} then converges to the following equations
    \begin{subequations}
        \label{surrogate_converge}
        \begin{align}
            &\widehat{C}_{\infty}^{-1}=\widehat{\mathcal{G}}^{T}\Sigma_{\eta}^{-1}\widehat{\mathcal{G}}+(\alpha^{2}\widehat{C}_{\infty}+\Sigma_{\omega})^{-1},\label{surrogate_converge_a}\\ 
            &\widehat{C}_{\infty}^{-1}\widehat{r}_{\infty}=\widehat{\mathcal{G}}^{T}\Sigma_{\eta}^{-1}y+(\alpha^{2}\widehat{C}_{\infty}+\Sigma_{\omega})^{-1}\alpha\widehat{r}_{\infty}\label{surrogate_converge_b}
        \end{align}
    \end{subequations}

In the following, we will demonstrate that if the surrogate $\widehat{\mathcal{G}}$ is near the true forward model $\mathcal{G}$, then the $\widehat{r}_{\infty}, \widehat{C}_{\infty}$ ought to be near the true ones as well. We shall need the following assumptions.

\begin{assumption}
        \label{assumption1}
    Suppose for any $\epsilon$, the linear neural operator $\widehat{\mathcal{G}}:\mathbb{R}^{N_{m}}\rightarrow \mathbb{R}^{N_{y}}$ can be trained sufficiently to satisfy 
    \begin{equation}
        \|\widehat{\mathcal{G}} - \mathcal{G}\|_{2} < \epsilon.
    \end{equation}
    \end{assumption}
\begin{assumption}
    \label{assumpotion2}
    Suppose the forward map $\mathcal{G}$ is bounded, that is 
    \begin{equation}
        \|\mathcal{G}\|_{2} < H,
    \end{equation}
    where $H$ is a constant.
\end{assumption}
\begin{assumption}
    \label{Assumption3}
    Suppose the matrix $\mathcal{G}^{T}\Sigma_{\eta}^{-1}\mathcal{G}\succ 0$\footnote{We use the notation $\succ$ here to demonstrate that the matrix is symmetric and positive definite} and can be bounded from below as 
    \begin{equation}
        \|\mathcal{G}^{T}\Sigma_{\eta}^{-1}\mathcal{G}\|_{2}>C_{1},
    \end{equation}
    where $C_{1}$ is a positive constant.
\end{assumption}

We can obtain the following lemma.
\begin{lemma}
    \label{lemma1}
    Under Assumptions \ref{assumption1}-\ref{Assumption3}, $\widehat{\mathcal{G}}^{T}\Sigma_{\eta}^{-1}\widehat{\mathcal{G}}$ is also bounded from below as 
    \begin{equation}
        \label{lemma_C_2}
        \|\widehat{\mathcal{G}}^{T}\Sigma_{\eta}^{-1}\widehat{\mathcal{G}}\|_{2}>C_{2}.
    \end{equation}
    where $C_{2}$ is constant dependent on $C_{1}$.
\end{lemma}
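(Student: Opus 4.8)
The plan is to treat $\widehat{\mathcal{G}}^{T}\Sigma_{\eta}^{-1}\widehat{\mathcal{G}}$ as a perturbation of $\mathcal{G}^{T}\Sigma_{\eta}^{-1}\mathcal{G}$, control the size of that perturbation using Assumptions~\ref{assumption1} and \ref{assumpotion2}, and then transfer the lower bound of Assumption~\ref{Assumption3} by means of the reverse triangle inequality for the spectral norm. Throughout I read $\|\cdot\|_{2}$ as the spectral (operator $2$-) norm, so that for a symmetric positive-definite matrix it equals the largest eigenvalue and the triangle inequality applies directly.

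First I would set $\delta := \widehat{\mathcal{G}} - \mathcal{G}$, so $\|\delta\|_{2} < \epsilon$ by Assumption~\ref{assumption1}, and expand the quadratic form:
\begin{equation*}
\widehat{\mathcal{G}}^{T}\Sigma_{\eta}^{-1}\widehat{\mathcal{G}} - \mathcal{G}^{T}\Sigma_{\eta}^{-1}\mathcal{G} = \mathcal{G}^{T}\Sigma_{\eta}^{-1}\delta + \delta^{T}\Sigma_{\eta}^{-1}\mathcal{G} + \delta^{T}\Sigma_{\eta}^{-1}\delta.
\end{equation*}
Taking the spectral norm, applying submultiplicativity, and inserting $\|\mathcal{G}\|_{2} < H$ from Assumption~\ref{assumpotion2} together with $\|\delta\|_{2} < \epsilon$, each of the three terms is bounded in terms of $\|\Sigma_{\eta}^{-1}\|_{2}$, $H$, and $\epsilon$, which collapses to
\begin{equation*}
\big\|\widehat{\mathcal{G}}^{T}\Sigma_{\eta}^{-1}\widehat{\mathcal{G}} - \mathcal{G}^{T}\Sigma_{\eta}^{-1}\mathcal{G}\big\|_{2} \leq \|\Sigma_{\eta}^{-1}\|_{2}\,\epsilon\,(2H + \epsilon).
\end{equation*}

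Next I would invoke the reverse triangle inequality and the lower bound $\|\mathcal{G}^{T}\Sigma_{\eta}^{-1}\mathcal{G}\|_{2} > C_{1}$ from Assumption~\ref{Assumption3} to obtain
\begin{equation*}
\big\|\widehat{\mathcal{G}}^{T}\Sigma_{\eta}^{-1}\widehat{\mathcal{G}}\big\|_{2} \geq \big\|\mathcal{G}^{T}\Sigma_{\eta}^{-1}\mathcal{G}\big\|_{2} - \big\|\widehat{\mathcal{G}}^{T}\Sigma_{\eta}^{-1}\widehat{\mathcal{G}} - \mathcal{G}^{T}\Sigma_{\eta}^{-1}\mathcal{G}\big\|_{2} > C_{1} - \|\Sigma_{\eta}^{-1}\|_{2}\,\epsilon\,(2H + \epsilon).
\end{equation*}
Defining $C_{2} := C_{1} - \|\Sigma_{\eta}^{-1}\|_{2}\,\epsilon\,(2H+\epsilon)$ then yields the claimed bound \eqref{lemma_C_2}, and exhibits $C_{2}$ as a quantity depending on $C_{1}$ (and the fixed constants $H$, $\|\Sigma_{\eta}^{-1}\|_{2}$, $\epsilon$), as stated.

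I do not expect a genuine obstacle, since this is a standard first-order perturbation estimate. The one point that needs care is ensuring $C_{2} > 0$, which is not automatic from the bound by itself: here I would use the quantifier in Assumption~\ref{assumption1} (``for any $\epsilon$'') to shrink $\epsilon$ until $\|\Sigma_{\eta}^{-1}\|_{2}\,\epsilon\,(2H+\epsilon) < C_{1}$, which makes $C_{2}$ a strictly positive constant. A secondary, purely expository subtlety is to keep the spectral-norm interpretation of $\|\cdot\|_{2}$ consistent across Assumptions~\ref{assumption1}--\ref{Assumption3}, so that the reverse triangle inequality and the identification of $\|\cdot\|_{2}$ with the largest eigenvalue of a symmetric positive-definite matrix are both legitimate.
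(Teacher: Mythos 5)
Your proof is correct and takes essentially the same route as the paper's: a perturbation bound on $\widehat{\mathcal{G}}^{T}\Sigma_{\eta}^{-1}\widehat{\mathcal{G}} - \mathcal{G}^{T}\Sigma_{\eta}^{-1}\mathcal{G}$ via Assumptions \ref{assumption1}--\ref{assumpotion2}, followed by the reverse triangle inequality against the lower bound of Assumption \ref{Assumption3}. The only differences are cosmetic: you use a symmetric three-term expansion in $\delta=\widehat{\mathcal{G}}-\mathcal{G}$ giving the constant $\epsilon(2H+\epsilon)\|\Sigma_{\eta}^{-1}\|_{2}$, where the paper uses a telescoping two-term split giving $2\epsilon H\|\Sigma_{\eta}^{-1}\|_{2}$ (implicitly bounding $\|\widehat{\mathcal{G}}\|_{2}\leq H$, which your version handles slightly more carefully), and you make explicit the requirement that $\epsilon$ be small enough for $C_{2}>0$, a point the paper leaves implicit in its final inequality $C_{1}-2\epsilon H\|\Sigma_{\eta}^{-1}\|_{2}\geq C_{2}$.
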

\begin{proof}
    We first consider 
    \begin{equation}
            \begin{split}
            \mathcal{G}^{T}\Sigma_{\eta}^{-1}\mathcal{G} - \widehat{\mathcal{G}}^{T}\Sigma_{\eta}^{-1} \widehat{\mathcal{G}}
            & = \mathcal{G}^{T}\Sigma_{\eta}^{-1}\mathcal{G} - \mathcal{G_{\theta}}^{T}\Sigma_{\eta}^{-1}\mathcal{G} +\mathcal{G_{\theta}}^{T}\Sigma_{\eta}^{-1}\mathcal{G} - \widehat{\mathcal{G}}^{T}\Sigma_{\eta}^{-1} \widehat{\mathcal{G}}\\ 
            & = \left(\mathcal{G} - \widehat{\mathcal{G}}\right)^{T}\Sigma_{\eta}^{-1}\mathcal{G} +\mathcal{G_{\theta}}^{T}\Sigma_{\eta}^{-1}\left(\mathcal{G} -\widehat{\mathcal{G}}\right).
            \end{split}
    \end{equation}
    Combining Assumptions \ref{assumption1} and \ref{assumpotion2}, this leads to 
    \begin{equation}
        \label{divide}
        \begin{split}
        \|\mathcal{G}^{T}\Sigma_{\eta}^{-1}\mathcal{G} - \widehat{\mathcal{G}}^{T}\Sigma_{\eta}^{-1} \widehat{\mathcal{G}}\|_{2} &\leq \|\mathcal{G} - \widehat{\mathcal{G}}\|_{2}\|\Sigma_{\eta}^{-1}\|_{2}(\|\mathcal{G}\|_{2} + \|\widehat{\mathcal{G}}\|_{2})\\
        &\leq 2\epsilon H\|\Sigma_{\eta}^{-1}\|_{2}.
        \end{split}
    \end{equation}
    Then, we have 
    \begin{equation}
        \begin{split}
        \|\widehat{\mathcal{G}}^{T}\Sigma_{\eta}^{-1}\widehat{\mathcal{G}}\|_{2} &= \|\widehat{\mathcal{G}}^{T}\Sigma_{\eta}^{-1}\widehat{\mathcal{G}} - \mathcal{G}^{T}\Sigma_{\eta}^{-1}\mathcal{G} + \mathcal{G}^{T}\Sigma_{\eta}^{-1}\mathcal{G}\|_{2}\\ 
        & \geq \|\mathcal{G}^{T}\Sigma_{\eta}^{-1}\mathcal{G}\|_{2} - \|\widehat{\mathcal{G}}^{T}\Sigma_{\eta}^{-1}\widehat{\mathcal{G}} - \mathcal{G}^{T}\Sigma_{\eta}^{-1}\mathcal{G}\|_{2}\\ 
        & \geq C_{1} - 2\epsilon H\|\Sigma_{\eta}^{-1}\|_{2}\geq C_{2}.
        \end{split}
    \end{equation}
\end{proof}

Note that these assumptions are reasonable and can be found in many references \cite{yan2017convergence,cao2023residual}. We will then supply the main theorem based on these  assumptions.
\begin{theorem}
    \label{main_theorem}
    Under Assumptions \ref{assumption1}-\ref{Assumption3}, suppose $Range(\mathcal{G}^{T}) = Range(\widehat{\mathcal{G}}^{T}) = \mathbb{R}^{N_{m}}$ and $\Sigma_{\omega}\succ 0, \Sigma_{\eta}\succ 0$. Then the sequence $\widehat{r}_{\infty}, \widehat{C}_{\infty}^{-1}$ in Eq.\eqref{surrogate_converge} obtained by using the surrogate model will converge to the $r_{\infty}, C_{\infty}^{-1}$ in Eq.\eqref{converge} and we have the following error estimate
    \begin{equation}
        \label{error_estimate}
        \begin{split}
        &\|C_{\infty}^{-1} - \widehat{C}_{\infty}^{-1}\|_{2} \leq \frac{2\epsilon HH_{\eta}}{1-\beta},\\ 
        &\|r_{\infty} - \widehat{r}_{\infty}\|_{2} \leq \frac{K_{1}H_{\eta}H_{y}}{C_{1}}\left(1 + \frac{2(1 + \alpha\beta) K_{2}H_{\eta} H^{2}}{(1-\beta)C_{2}}\right)\epsilon,
        \end{split}
    \end{equation}
    where $\beta, C_{1}, C_{2}, K_{1}, K_{2}, H_{\eta}, H_{y}, H$ are positive bounded constants.
\end{theorem}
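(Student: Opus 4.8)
The plan is to inherit from the cited linear UKI analysis the existence of the equilibria and the convergence of the iterates to them, so that the statement reduces to a \emph{static} perturbation estimate between the two fixed-point systems \eqref{converge} and \eqref{surrogate_converge}. First I would introduce the shorthand $A := \mathcal{G}^{T}\Sigma_{\eta}^{-1}\mathcal{G}$, $\widehat A := \widehat{\mathcal{G}}^{T}\Sigma_{\eta}^{-1}\widehat{\mathcal{G}}$, $B := \alpha^{2}C_{\infty}+\Sigma_{\omega}$ and $\widehat B := \alpha^{2}\widehat C_{\infty}+\Sigma_{\omega}$, so that the precision equations read $C_{\infty}^{-1}=A+B^{-1}$ and $\widehat C_{\infty}^{-1}=\widehat A+\widehat B^{-1}$, while the mean equations rearrange to $M r_{\infty}=\mathcal{G}^{T}\Sigma_{\eta}^{-1}y$ and $\widehat M\widehat r_{\infty}=\widehat{\mathcal{G}}^{T}\Sigma_{\eta}^{-1}y$ with $M := A+(1-\alpha)B^{-1}$ and $\widehat M := \widehat A+(1-\alpha)\widehat B^{-1}$. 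Because $\Sigma_{\omega},\Sigma_{\eta}\succ0$ and the range hypotheses force $A,\widehat A\succ0$, every inverse here is well defined; moreover $M\succeq A$ and $\widehat M\succeq\widehat A$ for $\alpha\le1$, so $\lambda_{\min}(M)$ and $\lambda_{\min}(\widehat M)$ are bounded below by the coercivity constants $C_{1}$ and $C_{2}$ (the latter supplied by Lemma \ref{lemma1}), which produces the denominators $C_{1},C_{2}$ in \eqref{error_estimate}.

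Next I would establish the $\epsilon$-independent a priori bounds that make the argument close. From $C_{\infty}^{-1}\succeq B^{-1}$ one gets $C_{\infty}\preceq B=\alpha^{2}C_{\infty}+\Sigma_{\omega}$, hence $C_{\infty}\preceq(1-\alpha^{2})^{-1}\Sigma_{\omega}$ for $\alpha<1$, and the same for $\widehat C_{\infty}$; these give uniform bounds on $\|C_{\infty}\|_{2},\|\widehat C_{\infty}\|_{2}$ and hence make the contraction factor $\beta := \alpha^{2}\|\Sigma_{\omega}^{-1}\|_{2}^{2}\|C_{\infty}\|_{2}\|\widehat C_{\infty}\|_{2}$ finite and, in the stated regime, strictly less than one. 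I would also record $\|\widehat{\mathcal{G}}\|_{2}\le H+\epsilon$ and, exactly as in \eqref{divide}, $\|A-\widehat A\|_{2}\le 2\epsilon H H_{\eta}$ with $H_{\eta}=\|\Sigma_{\eta}^{-1}\|_{2}$.

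The covariance estimate then follows by subtracting the two precision identities, $C_{\infty}^{-1}-\widehat C_{\infty}^{-1}=(A-\widehat A)+(B^{-1}-\widehat B^{-1})$, and applying the resolvent identities $B^{-1}-\widehat B^{-1}=\alpha^{2}B^{-1}(\widehat C_{\infty}-C_{\infty})\widehat B^{-1}$ and $\widehat C_{\infty}-C_{\infty}=\widehat C_{\infty}(C_{\infty}^{-1}-\widehat C_{\infty}^{-1})C_{\infty}$, together with $\|B^{-1}\|_{2},\|\widehat B^{-1}\|_{2}\le\|\Sigma_{\omega}^{-1}\|_{2}$. This yields $\|B^{-1}-\widehat B^{-1}\|_{2}\le\beta\,\|C_{\infty}^{-1}-\widehat C_{\infty}^{-1}\|_{2}$, so the unknown difference reappears on the right-hand side; absorbing it gives $(1-\beta)\|C_{\infty}^{-1}-\widehat C_{\infty}^{-1}\|_{2}\le 2\epsilon H H_{\eta}$, which is the first bound in \eqref{error_estimate}. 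For the mean I would split $r_{\infty}-\widehat r_{\infty}=M^{-1}(\mathcal{G}-\widehat{\mathcal{G}})^{T}\Sigma_{\eta}^{-1}y+(M^{-1}-\widehat M^{-1})\widehat{\mathcal{G}}^{T}\Sigma_{\eta}^{-1}y$; the first term is bounded by $\|M^{-1}\|_{2}\,\epsilon H_{\eta}H_{y}$ (the leading $K_{1}H_{\eta}H_{y}/C_{1}$ term, with $H_{y}=\|y\|_{2}$), and the second via $M^{-1}-\widehat M^{-1}=M^{-1}(\widehat M-M)\widehat M^{-1}$, where $\|\widehat M-M\|_{2}\le\|A-\widehat A\|_{2}+(1-\alpha)\|B^{-1}-\widehat B^{-1}\|_{2}$ is controlled by the covariance step, producing the $(1+\alpha\beta)K_{2}H_{\eta}H^{2}/((1-\beta)C_{2})$ correction.

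The main obstacle is the self-referential nature of the covariance equation: since $B^{-1}=(\alpha^{2}C_{\infty}+\Sigma_{\omega})^{-1}$ depends on the unknown $C_{\infty}$, the quantity $\|C_{\infty}^{-1}-\widehat C_{\infty}^{-1}\|_{2}$ cannot be estimated directly but only implicitly, reappearing on both sides of the inequality. Closing this loop requires both the uniform-in-$\epsilon$ a priori bounds on $\|C_{\infty}\|_{2},\|\widehat C_{\infty}\|_{2}$ and the contraction condition $\beta<1$; rigorously certifying $\beta<1$ in terms of $\alpha$ and the spectra of $\Sigma_{\omega},\Sigma_{\eta}$, rather than merely assuming it, is the delicate point, after which the remaining mean estimate is routine norm bookkeeping resting on Lemma \ref{lemma1} and Assumptions \ref{assumption1}--\ref{Assumption3}.
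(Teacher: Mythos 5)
Your overall architecture coincides with the paper's proof: the paper likewise treats the problem as a static perturbation of the two fixed-point systems, bounds $\|\mathcal{G}^{T}\Sigma_{\eta}^{-1}\mathcal{G}-\widehat{\mathcal{G}}^{T}\Sigma_{\eta}^{-1}\widehat{\mathcal{G}}\|_{2}\leq 2\epsilon H H_{\eta}$ exactly as you do, absorbs the covariance-dependent term with a factor $\beta<1$ to get the first estimate, and for the mean works with precisely your operator $M=\mathcal{G}^{T}\Sigma_{\eta}^{-1}\mathcal{G}+(1-\alpha)(\alpha^{2}C_{\infty}+\Sigma_{\omega})^{-1}$ (written there as $C_{\infty}^{-1}-\alpha f(C_{\infty}^{-1};\alpha)$ with $f(X;\alpha)=(\alpha^{2}X^{-1}+\Sigma_{\omega})^{-1}$), including your bound on $\|\widehat{r}_{\infty}\|_{2}$ via $\widehat{M}\succeq \widehat{\mathcal{G}}^{T}\Sigma_{\eta}^{-1}\widehat{\mathcal{G}}$. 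Your replacement of the paper's matrix mean value theorem by explicit resolvent identities is an equivalent, arguably more elementary, reorganization, and your mean splitting $r_{\infty}-\widehat{r}_{\infty}=M^{-1}(\mathcal{G}-\widehat{\mathcal{G}})^{T}\Sigma_{\eta}^{-1}y+(M^{-1}-\widehat{M}^{-1})\widehat{\mathcal{G}}^{T}\Sigma_{\eta}^{-1}y$ produces the same shape of estimate as the paper's $I_{3},I_{4},I_{5}$ decomposition, up to harmless constants.

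There is, however, one genuine gap, and you correctly sensed where it lies but your proposed fix does not work. The paper takes $\beta:=\sup_{X}\|Df(X;\alpha)\|_{2}<1$ as a black box from the cited UKI analysis (Huang--Schneider--Stuart), whereas you define $\beta=\alpha^{2}\|\Sigma_{\omega}^{-1}\|_{2}^{2}\|C_{\infty}\|_{2}\|\widehat{C}_{\infty}\|_{2}$ and try to certify $\beta<1$ from the a priori bound $C_{\infty}\preceq(1-\alpha^{2})^{-1}\Sigma_{\omega}$. That route fails: it yields only $\beta\lesssim\alpha^{2}\kappa(\Sigma_{\omega})^{2}/(1-\alpha^{2})^{2}$, which is not below one in general (it scales with the squared condition number of $\Sigma_{\omega}$ and blows up as $\alpha\to1$), and the a priori bound itself is vacuous at $\alpha=1$, a value the theorem permits and the paper's experiments actually use. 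Even the sharper grouping $\alpha^{2}\|(\alpha^{2}C_{\infty}+\Sigma_{\omega})^{-1}C_{\infty}\|_{2}\,\|\widehat{C}_{\infty}(\alpha^{2}\widehat{C}_{\infty}+\Sigma_{\omega})^{-1}\|_{2}$ only gives $\alpha^{-2}\geq1$, so the loop cannot be closed by these estimates alone; you need the contraction property of $f$ on the relevant invariant set (precisions bounded below by $\mathcal{G}^{T}\Sigma_{\eta}^{-1}\mathcal{G}\succ0$), which is exactly what the paper imports by citation. A second, smaller discrepancy: you read $C_{1},C_{2}$ as coercivity (smallest-eigenvalue) bounds, but Assumption \ref{Assumption3} and Lemma \ref{lemma1} bound the spectral norms $\|\mathcal{G}^{T}\Sigma_{\eta}^{-1}\mathcal{G}\|_{2}$ and $\|\widehat{\mathcal{G}}^{T}\Sigma_{\eta}^{-1}\widehat{\mathcal{G}}\|_{2}$ from below; this is why the condition numbers $K_{1},K_{2}$ appear in \eqref{error_estimate} through $\|(\mathcal{G}^{T}\Sigma_{\eta}^{-1}\mathcal{G})^{-1}\|_{2}\leq K_{1}/C_{1}$, and your version as written would not reproduce the stated constants.
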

\begin{proof} The proof can be found in Appendix A.
\begin{remark}
 In order to meet the requirements of Theorem \ref{main_theorem}, it is possible to make the neural operator $\mathcal{G}_{\theta}$ linear by dropping the nonlinear activation functions in the branch net and keeping the activation functions in the trunk net.
\end{remark}

\section{Numerical experiments}
\label{numerical_experiments}
In this section, we provide several numerical examples to demonstrate the effectiveness and precision of the adaptive operator learning approach for solving inverse problems. To better present the results, we will compare DeepOnet based UKI inversion results (referred to as DeepOnet-UKI) with those of conventional FEM solvers (referred to as FEM-UKI). In addition, depending on whether adaptive refinement is used, the DeepOnet-UKI method has two variants: DeepOnet-UKI-Direct and DeepOnet-UKI-Adaptive. In particular, for DeepOnet-UKI-Direct, we will leave the surrogate model unchanged during the UKI iteration process.

In all of our numerical tests, the branch and trunk nets for DeepOnet are fully connected neural networks with five hidden layers and one hundred neurons in each layer, with the \textit{tanh} function as the activation function.  DeepOnet is trained offline with $1\times 10^5$ iterations and $N_{prior} = 1000$ prior samples from the Gaussian random field.  Unless otherwise specified, we set the maximum retraining number to $I_{max} = 10$ and the tolerance to $\epsilon = 0.01$.  For all examples investigated in this paper, the synthetic noisy data are generated as:
\begin{equation}
    y_{obs} = y_{ref} + \mbox{max}\{|y_{ref}|\}\delta \xi,
 \end{equation}
where $y_{ref} = \mathcal{G}(m_{ref})$ are the exact data,  $\delta$ dictates the relative noise level and $\xi$ is a Gaussian random variable with zero mean and unit standard deviation. In UKI, the regularization parameters are $\alpha=0.5$ for noise levels 0.05 and 0.1 and $\alpha=1$ for noise levels 0.01. The starting vector for the UKI is chosen at random from $\mathcal{N}(0,I)$. The selection of other hyperparameters is based on \cite{huang2022iterated}.  For numerical examples, we set $\Omega =[0,1]^2$. The maximum number of UKI iterations per cycle is 20 for FEM-UKI and DeepOnet-UKI-Direct, and 10 for DeepOnet-UKI-Adaptive. Following that, we will use the greedy algorithm to select $Q= 50$ adaptive samples for noise level 0.01 and $Q = 20$ adaptive samples for noise levels 0.05 and 0.1 from $K= 2000$ samples.  

To measure the accuracy of the numerical approximation with respect to the exact solution, we  use the following relative inversion error $e_{\mathcal{I}}$ defined as
 \begin{equation}
    \label{measure_function_relative}
    e_{\mathcal{I}} = \frac{\|\widehat{m} - m_{ref}\|_{2}}{\|m_{ref}\|_{2}},
 \end{equation}
 where $\widehat{m}$ and $m_{ref}$ are the numerical and exact solutions, respectively. Additionally, we will create $M = 100$ samples during the UKI iteration process to calculate the local model error in Eq.\eqref{localerr} as
 \begin{equation}
    \label{dlocal_model_error}
    e_{\mathcal{M}} = \frac{1}{M}\sum_{i=1}^{M}\left\|\widehat{\mathcal{G}}_{t}(m_{i}) - \mathcal{G}(m_{i})\right\|_{2},
 \end{equation}
to demonstrate that our adaptive framework can actually reduce the local model error. Moreover, we also calculate the data fitting error via Eq.\eqref{least-sqaured-error} using the true model during the UKI iteration process. 
   
 
 \subsection{Example 1: Darcy flow}
 \label{Darcy_flow}
In the first example, we consider the following Darcy flow problem:
 \begin{equation}
    \label{darcy_equation}
    \begin{split}
    -\nabla\cdot (\exp(\mb{m}(\mb{x}))\nabla{u(\mb{x})}) &= f(\mb{x}) ,\quad \mb{x}\in \Omega, \\ 
    u(\mb{x}) &= 0,\quad\quad \mb{x}\in\partial \Omega. 
    \end{split}
 \end{equation}
 Here, the source function $f(\mb{x})$ is defined as 
 \begin{equation}
    f(x_{1}, x_{2})=\begin{cases}
        1000\quad 0\leq x_{2}\leq \frac{4}{6},\\ 
        2000\quad \frac{4}{6}< x_{2}\leq \frac{5}{6},\\  
        3000 \quad \frac{5}{6}<x_{2}\leq 1.
    \end{cases}
 \end{equation}
 
 \begin{figure}[t]
\begin{center} 
\begin{overpic}[width=0.3\textwidth]{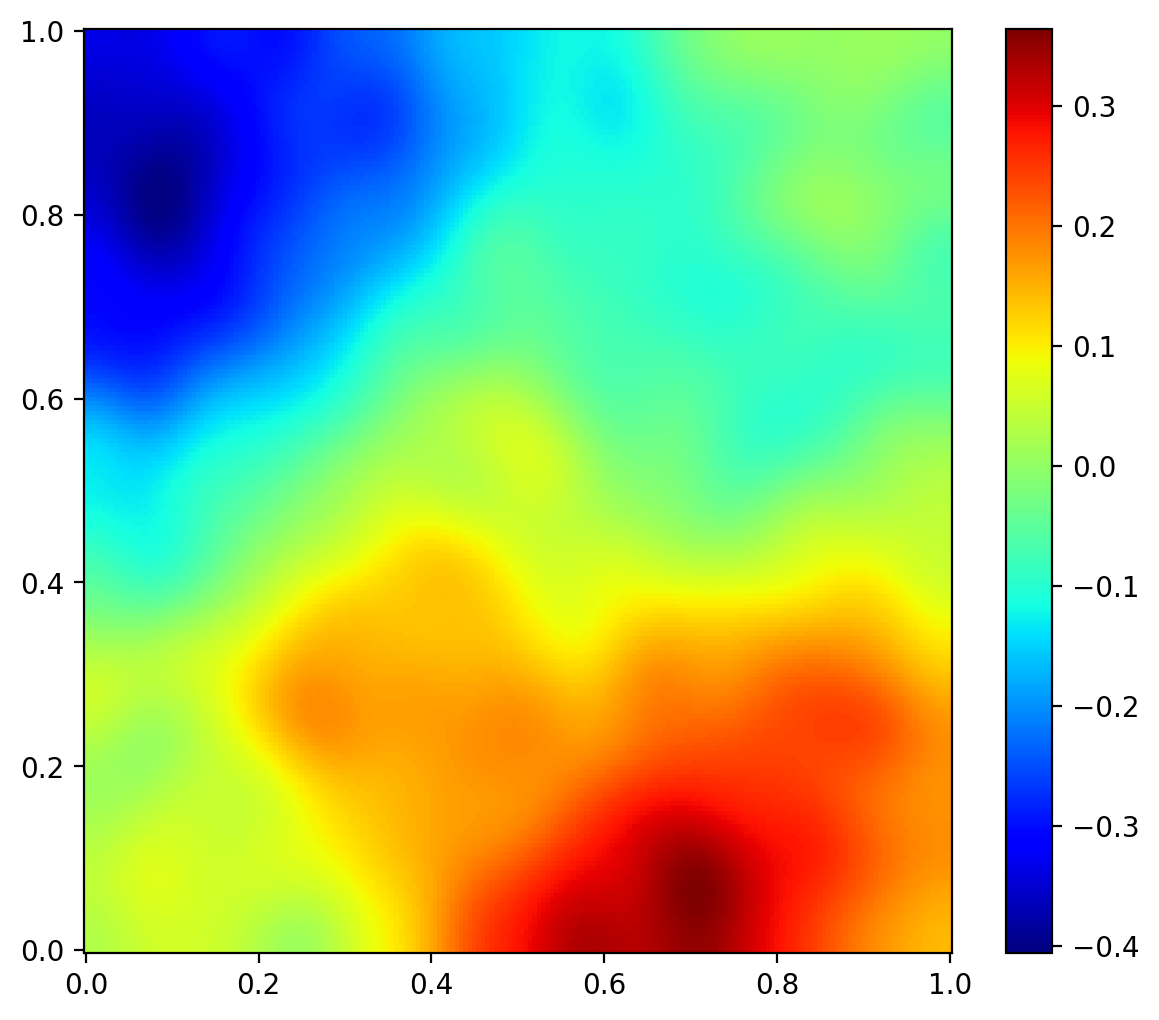}
\end{overpic}
\begin{overpic}[width=0.308\textwidth]{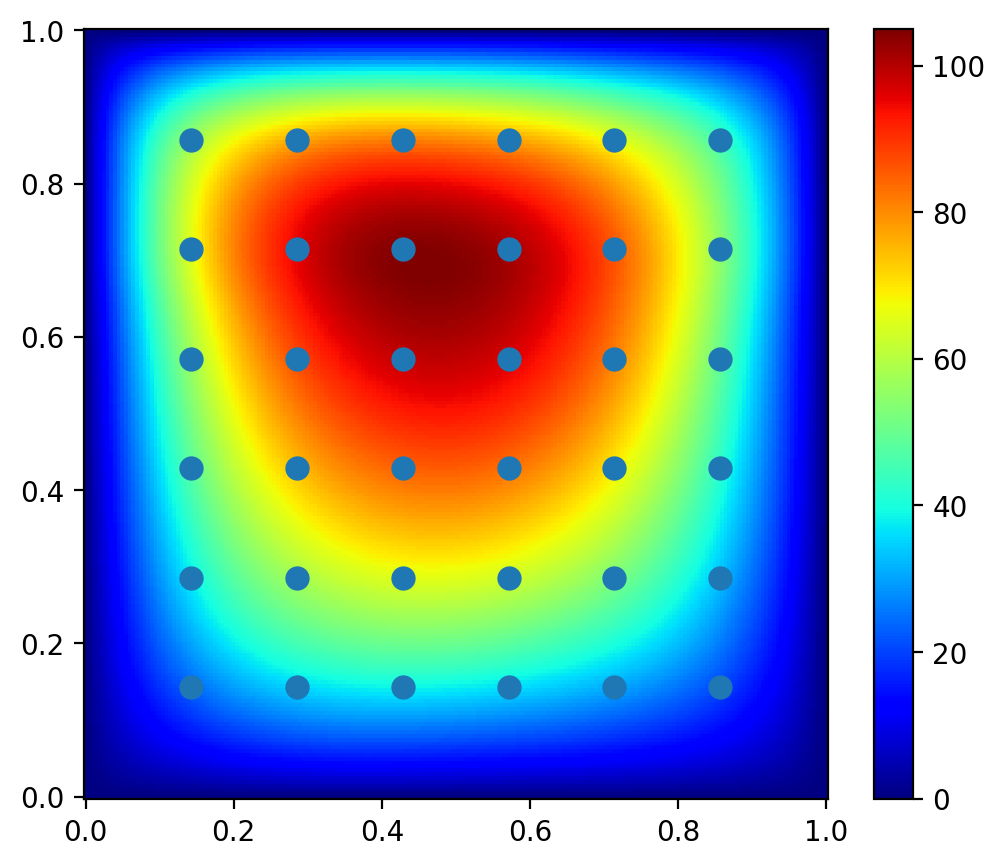}
\end{overpic}
\end{center}
\begin{center} 
    \begin{overpic}[width=0.295\textwidth]{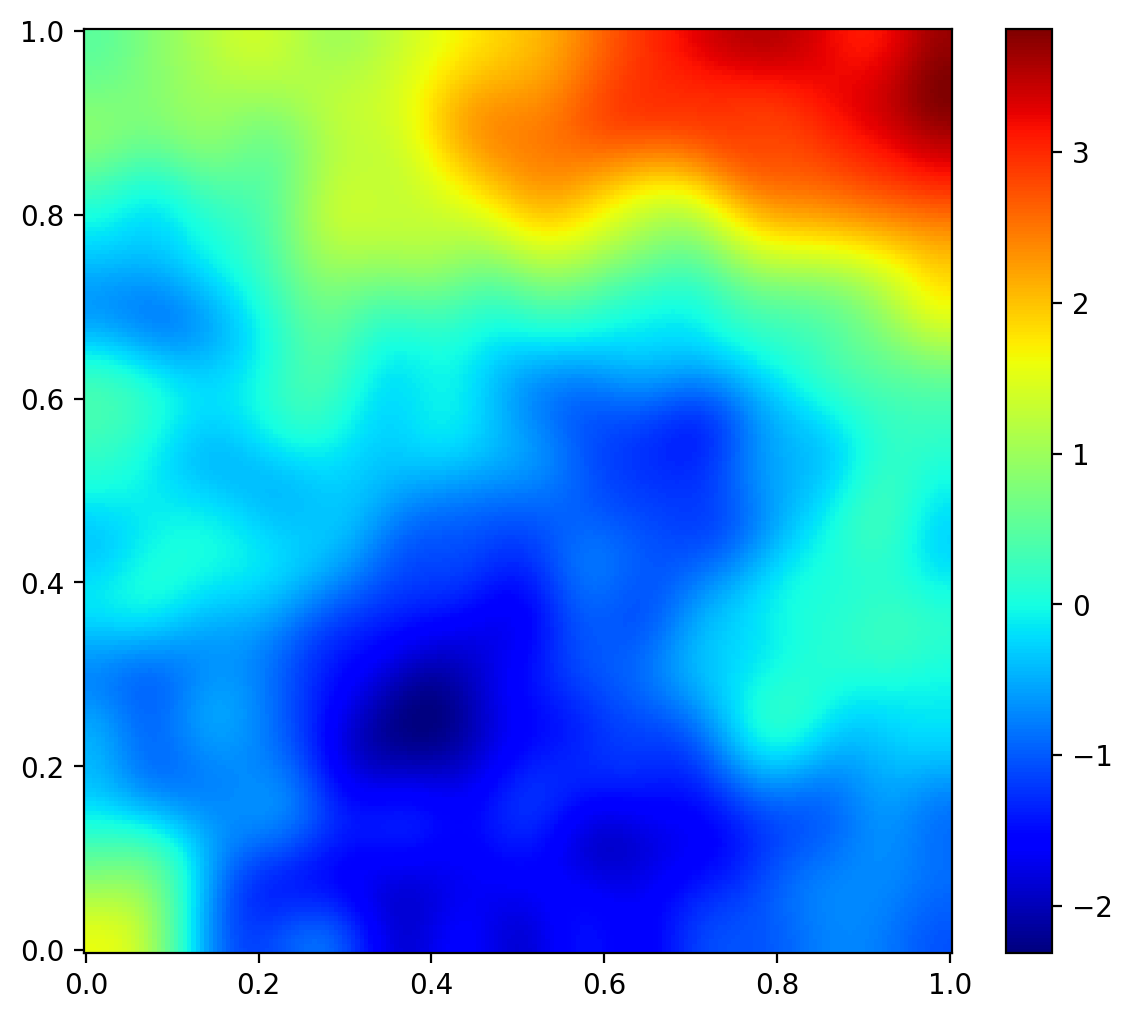}
    \end{overpic}
    \begin{overpic}[width=0.31\textwidth]{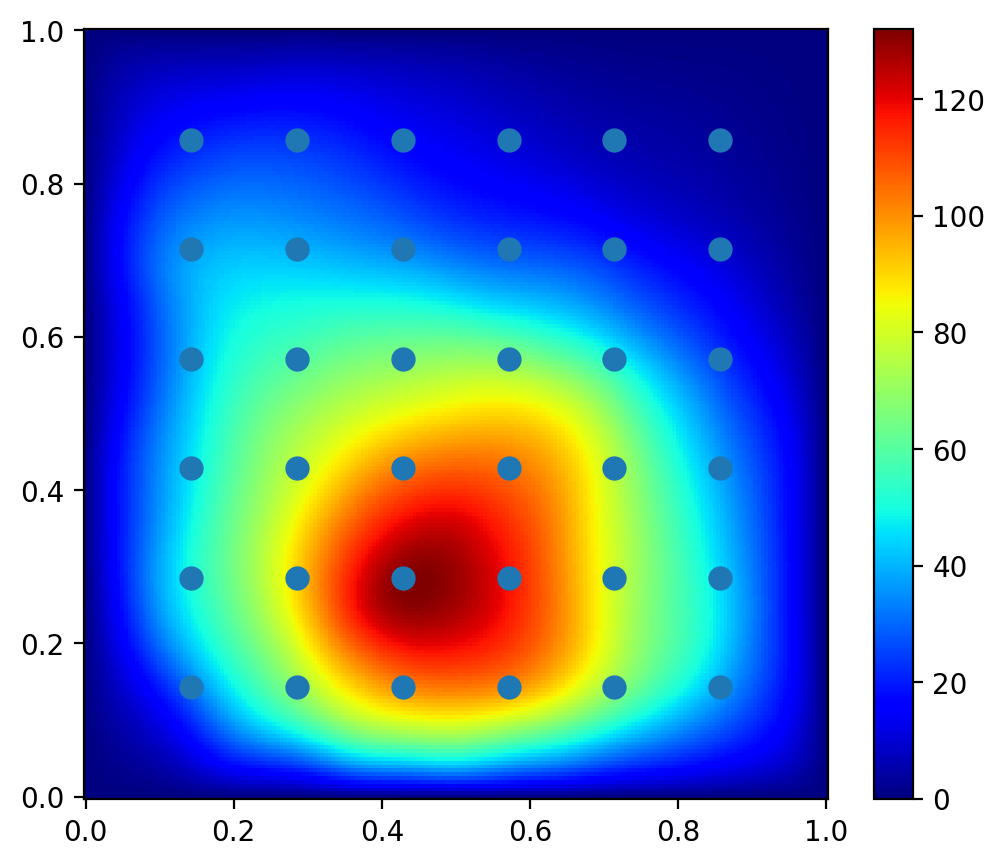}
    \end{overpic}
    \end{center}
\caption{The ground truth for \textit{in-distribution} data(IDD) and \textit{out-of-distribution} data (OOD) from above to below. Left: the true permeability fields $\mb{m}(\mb{x})$. Right: the pressure fields $u(\mb{x})$ and the corresponding $36$ equidistant observations with noise level 0.01. }
\label{observations}  
 \end{figure}
 
 The aim is to determine the  permeability $\mb{m}(\mb{x})$ from noisy measurements of the $u$-field at a finite set of locations.  To ensure the existence of the posterior distribution, we typically selected the prior distribution $\nu_{0}$ as a Gaussian measure $\mathcal{N}(r_{pr}, \mathcal{C}_{pr})$.  In particular, we focus on the covariance operator with the following form: 
 \begin{equation}
     \label{prior_cov}
     \mathcal{C}_{pr} = \sigma^{2}(-\Delta + \tau^{2})^{-d},
 \end{equation}
 where $\Delta$ denotes the Laplacian operator in $\Omega$ subject to homogeneous Neumann boundary conditions, $\tau$ denotes the inverse length scale of the random field and $d > 0$ determines its regularity. For the numerical experiments presented in this section, we take  the same values for these parameters as in\cite{huang2022iterated}: $\tau = 3, d = 2, \sigma = 1$. To sample from the prior distribution, we can use the Karhunen-Loeve (KL) expansion, which has the form 
 \begin{equation}
     \label{KL}
     \mb{m}(\mb{x}) = \sum_{k\in \mathbb{Z}^+} \textcolor{black}{\zeta_{k}}\sqrt{\lambda_{k}}\psi_{k}(\mb{x}),
 \end{equation}
where $\lambda_k$ and $\psi_k$ are the eigenvalues and eigenfunctions,  and $\textcolor{black}{\zeta_{k}} \sim \mathcal{N}(0, 1)$ are independent random variables. In practice, we truncate the sum \eqref{KL} to $n_d$ terms, based on the largest $n_d$ eigenvalues, and hence $\textcolor{black}{\zeta} \in \mathbb{R}^{n_d}$. The forward problem is solved by FEM method on a $70\times 70$ grid.  

We will create the observation data for the inverse problem using the \textit{in-distribution} data (IDD) and \textit{out-of-distribution} data (OOD), respectively, as shown in Fig.\ref{observations}. The IDD field $\mb{m}_{ref}(\mb{x})$ is calculated using Eq.\eqref{KL} with $n_d=256$ and $\textcolor{black}{\zeta_{k}}\sim \mathcal{N}(0,1)$. The OOD field $\mb{m}_{ref}(\mb{x})$ is generated for convenience  by sampling $\textcolor{black}{\zeta_{k}}\sim \mathcal{U}[-20, 20], k=1,\ldots, 256$.  To avoid the inverse crime, we will try to inverse the first $N_m = 128$ KL modes using these observation data.

 We plot the data fitting error, model error, and inversion error in Fig.\ref{loss_flow} to demonstrate the effectiveness of our framework. The performance of IDD and OOD data is different. For IDD data, if we directly apply the initial trained surrogate to run UKI, known as DeepOnet-UKI-Direct, we can see that the model error is consistently small. Even without refinements, the inversion error would be similar to that obtained by FEM-UKI, as shown in the right display of Fig.\ref{loss_flow}.  However, if we use an adaptive dataset to refine the initial model, we can still see a significant decrease in the local model error, resulting in a better estimate after running UKI for several steps.
This suggests that refinements can improve the inversion accuracy of IDD data. The situation with OOD data is not the same. Because the ground truth is far from the prior distribution, the model error will first decrease and then explode suddenly as expected if we directly apply the initial model, as shown in the middle display of Fig.\ref{loss_flow}. In such cases, DeepOnet-UKI-Direct will produce an incorrect estimate, which requires the refinement of the surrogate to improve inversion accuracy. The refinement process is typically divided into two stages: exploration and exploitation. During the exploration stage, we will run UKI with the current surrogate for $T$ steps.  Then we will select the {\it anchor point} with smallest data fitting error computed with true model.  In the exploitation stage, we generate an adaptive training dataset near this {\it anchor point} using the greedy algorithm to refine the surrogate, which leads to much smaller model error as demonstrated in the middle display of Fig.\ref{loss_flow}. Then, using this refined surrogate, we continue the UKI iteration, starting with the {\it anchor point} from the previous refinement.  This significantly improves the inversion results, as shown right display of Fig.\ref{loss_flow}.  Additionally, this figure shows that the model error will dramatically increase during the inversion process with just one refinement.  Refinements will no longer clearly affect the accuracy of the inversion after five iterations, at which point the model error is usually negligible. Note that this strategy also works for IDD data; with refinements, our method can reduce model error and achieve comparable performance to FEM-UKI. The difference is that for OOD data, convergence is slower, leading to more refinements. While this agrees with our formal analysis. It is worth noting that, in the IDD situation, the adaptive method produces better results than the FEM-UKI. One probable explanation is that the model error is lower than the noise level of observation data, resulting in a minor random fluctuation of data $y_{obs}$ that benefits the UKI method.

To summarize, DeepOnet-UKI-Adaptive can perform well for both IDD and OOD data. The black star in Fig.\ref{loss_flow} denotes the final solution for DeepOnet-UKI-Adaptive selected with the minimum data fitting error. We can observe that this value is typically the same as that acquired by FEM-UKI. We show the final estimated permeability fields generated by three different approaches in Figs.\ref{kappa_flow_in_distribution} and \ref{kappa_flow}. The estimated permeability fields obtained by FEM-UKI and DeepOnet-UKI-Adaptive are very similar to the true permeability field, however DeepOnet-UKI-Direct's result differs dramatically, illustrating the usefulness of our framework.
 \begin{figure}[t]
    \begin{center}
        \begin{overpic}[width=0.31\textwidth]{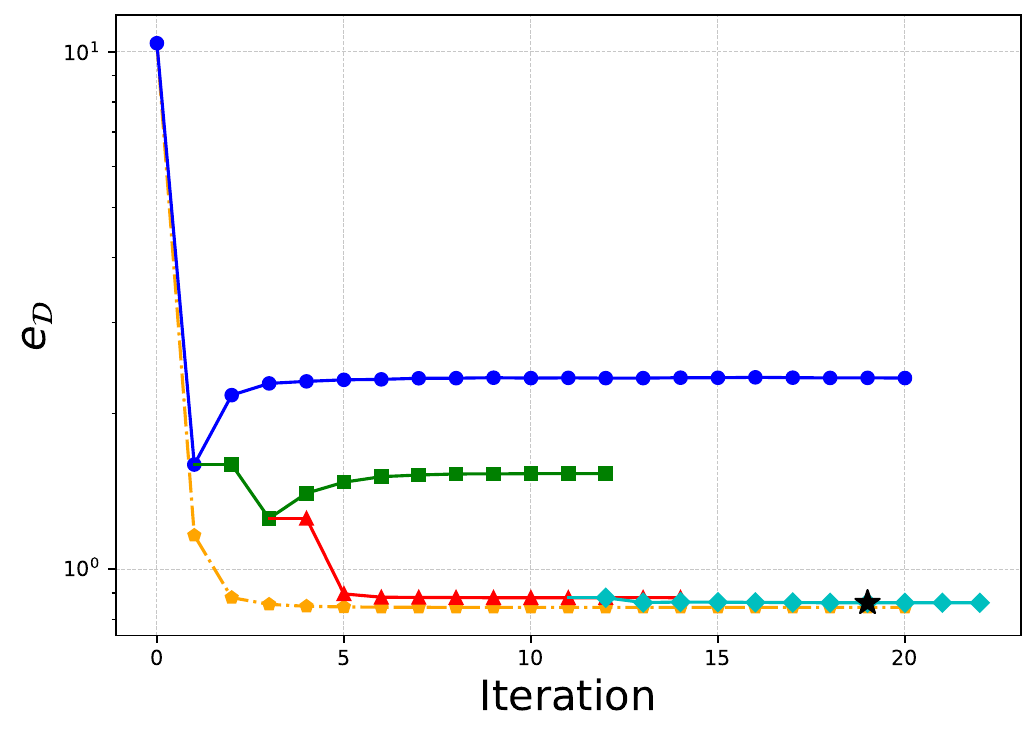}
                 \put (35,72) {\scriptsize {\bf fitting error}}
          \put (16,50) {\footnotesize \bf IDD case}
        \end{overpic}
        \begin{overpic}[width=0.31\textwidth]{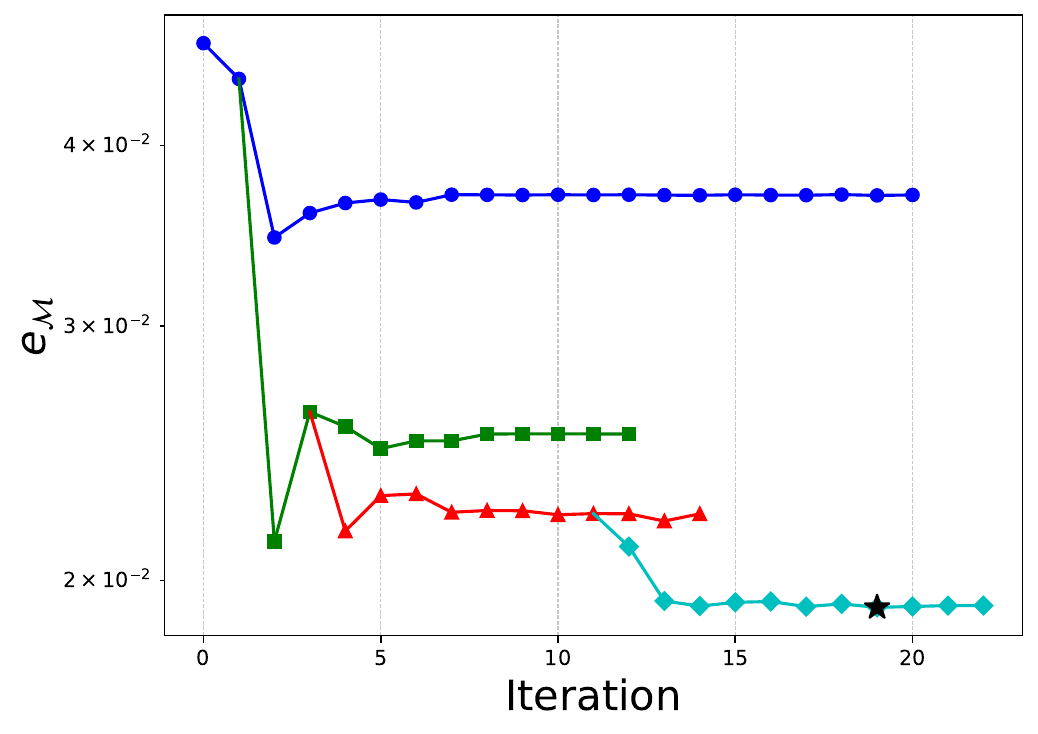}
           \put (38,72) {\scriptsize {\bf model error}}
        \end{overpic}
        \begin{overpic}[width=0.32\textwidth]{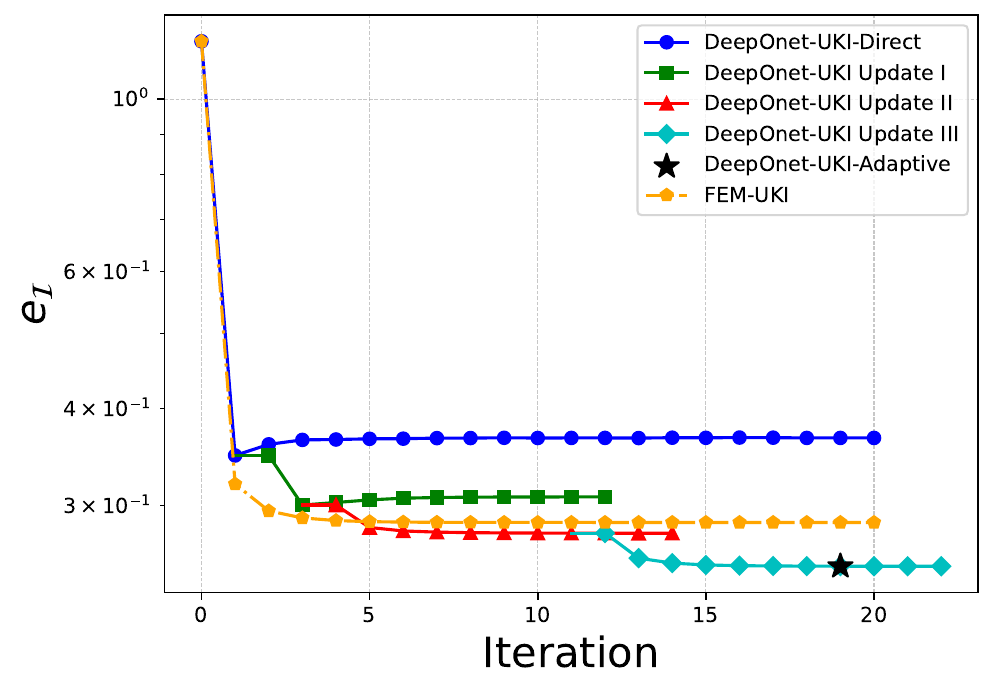}
           \put (25,69) {\scriptsize {\bf relative inversion error}}
        \end{overpic}
    \end{center}
    \begin{center}
        \begin{overpic}[width=0.31\textwidth]{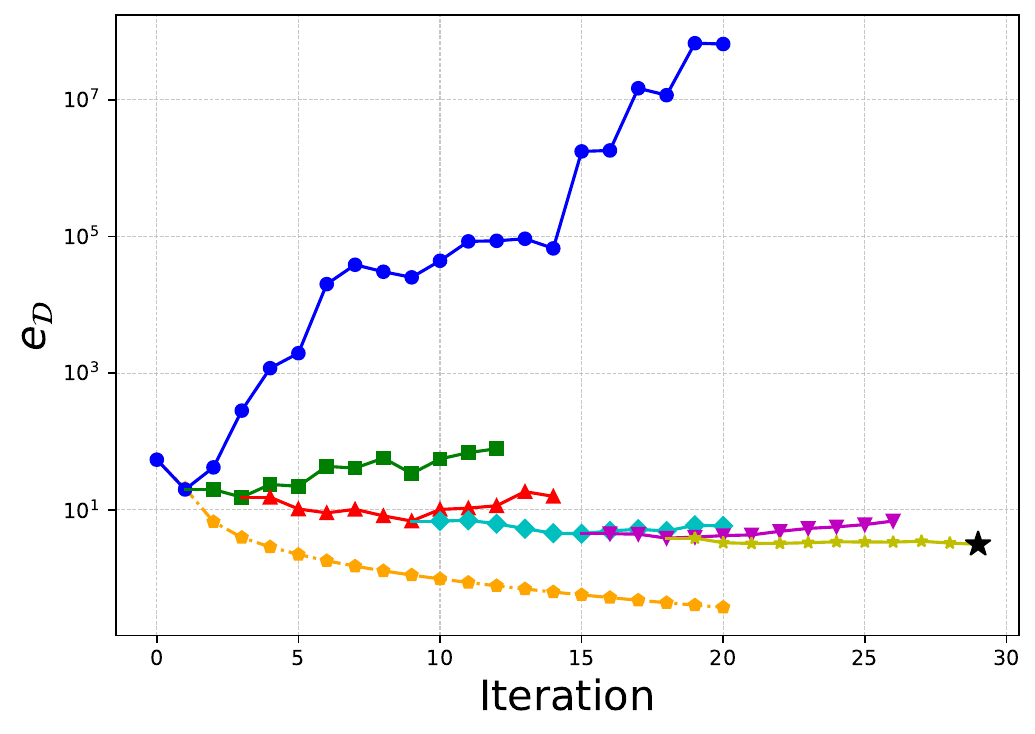}
            \put (14,50) {\footnotesize \bf OOD case}
        \end{overpic}
        \hspace{0.1cm}
        \begin{overpic}[width=0.31\textwidth]{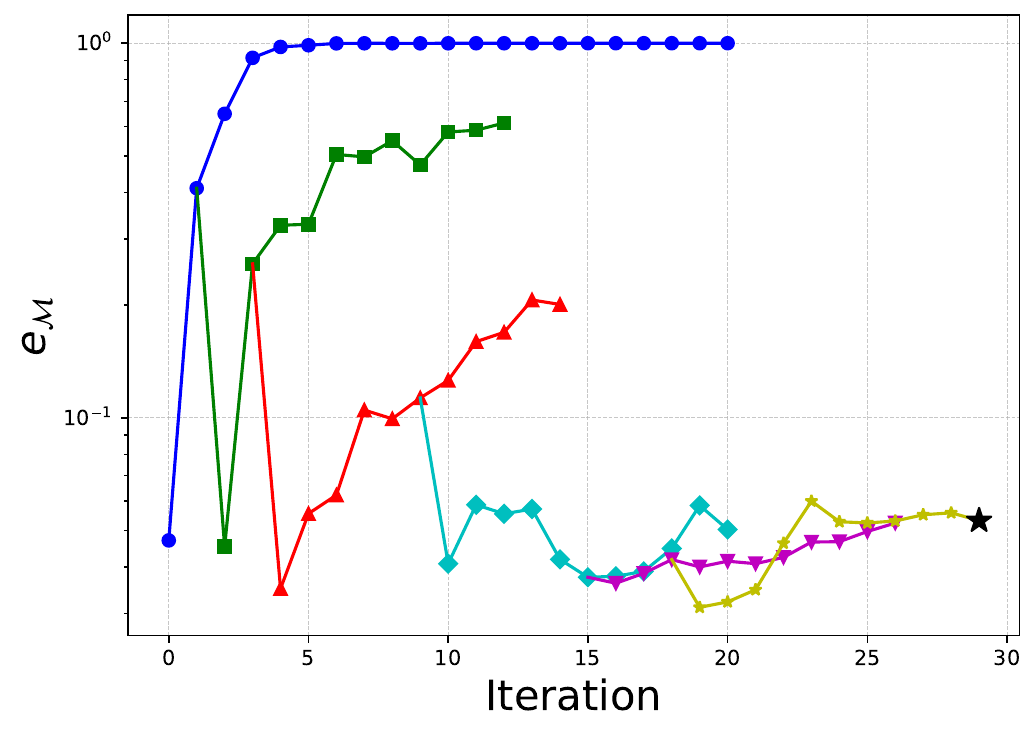}
        \end{overpic}
        \begin{overpic}[width=0.305\textwidth]{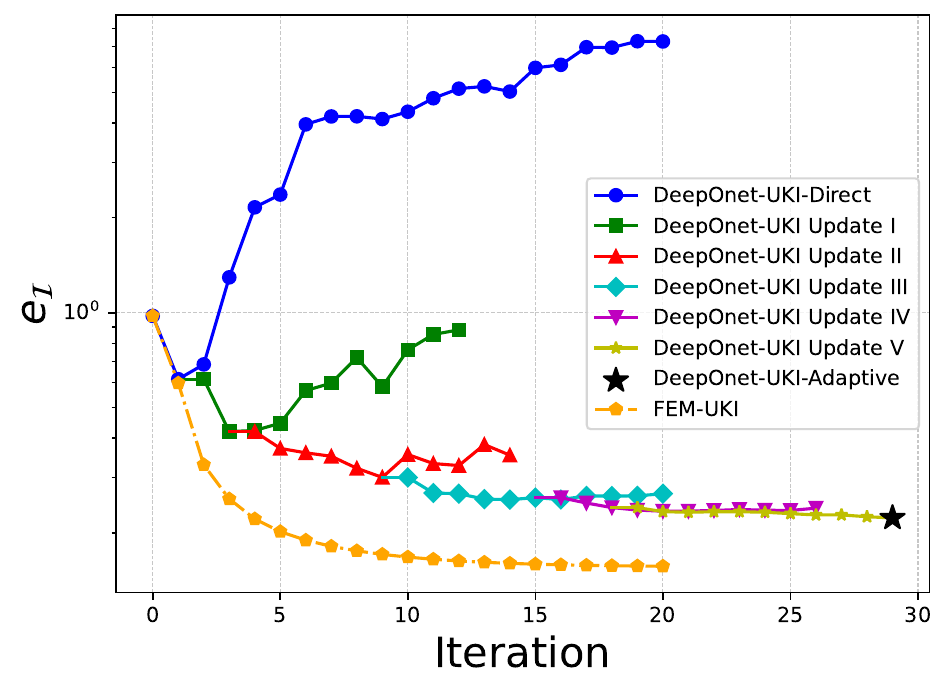}
        \end{overpic}
    \end{center}
    \caption{The data fitting error (left),  model error (middle) and relative inversion error (right) for Example 1.  Above:  IDD case. Below:  OOD case.}
    \label{loss_flow}
\end{figure}

\begin{figure}[t]
    \begin{center}
        \begin{overpic}[width=0.3\textwidth]{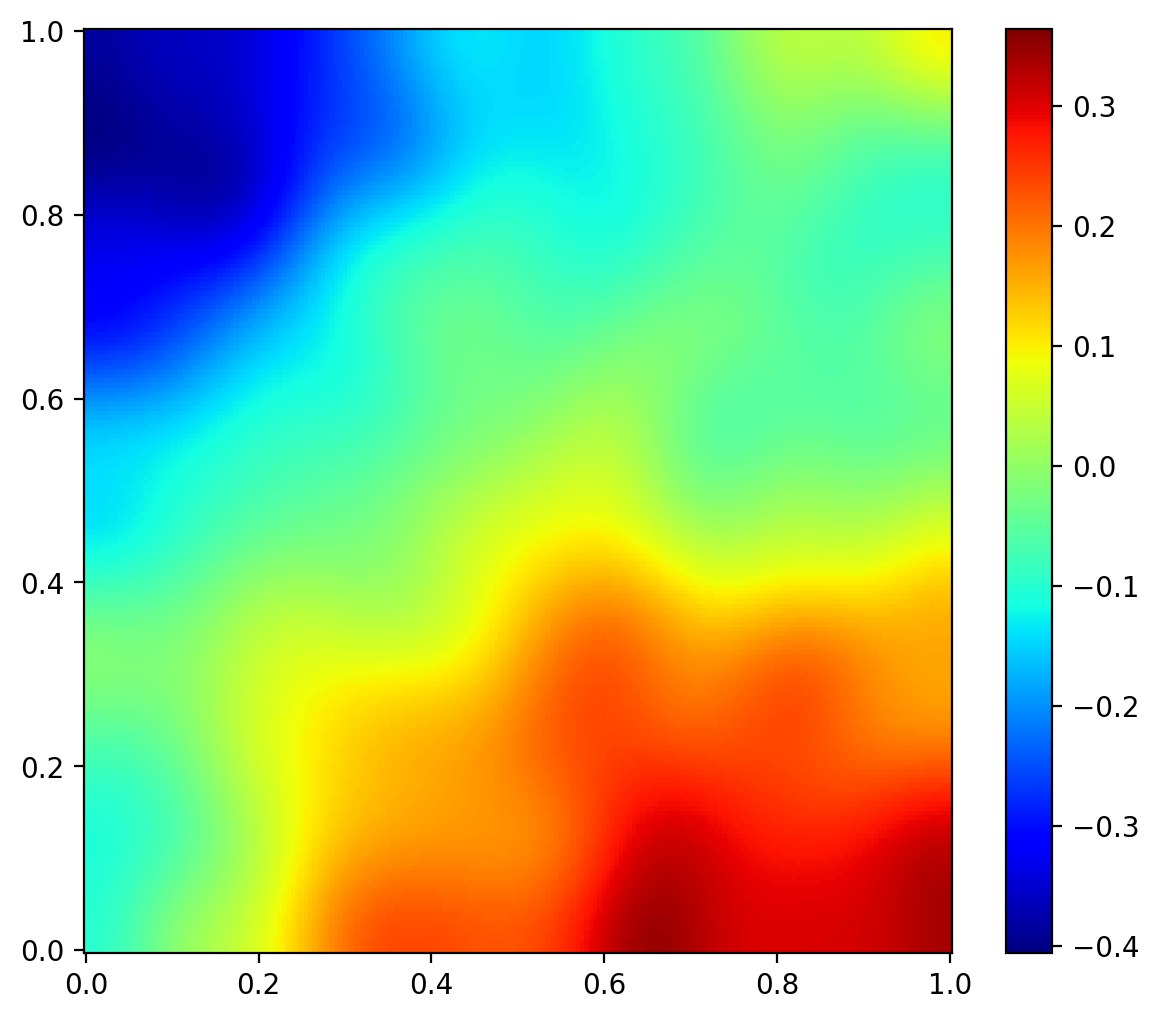}
        \put (30,88) {\footnotesize \bf FEM-UKI}
        \end{overpic}
        \begin{overpic}[width=0.3\textwidth]{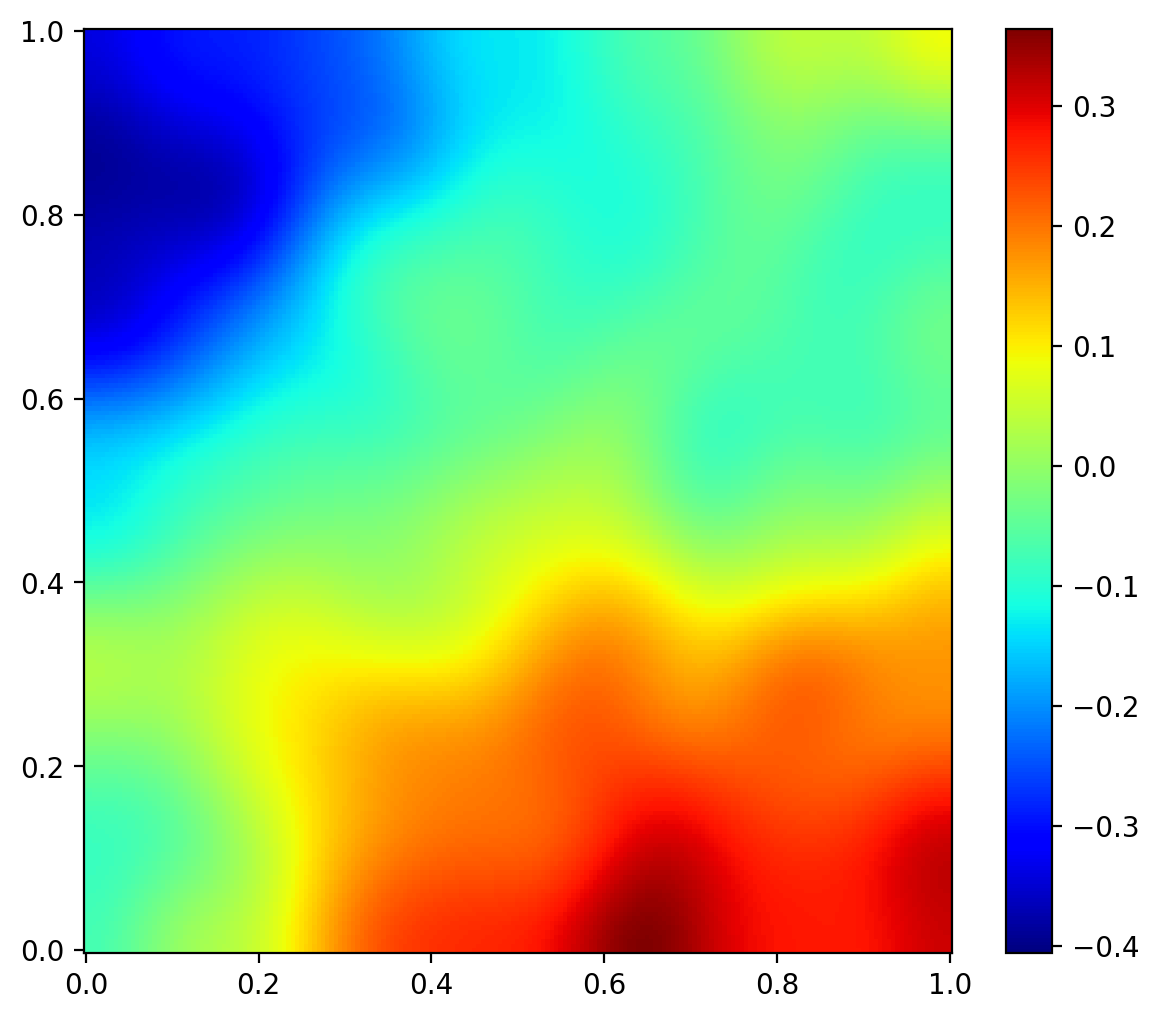}
            \put (8,88) {\footnotesize \bf DeepOnet-UKI-Adaptive}
        \end{overpic}
                \begin{overpic}[width=0.30\textwidth]{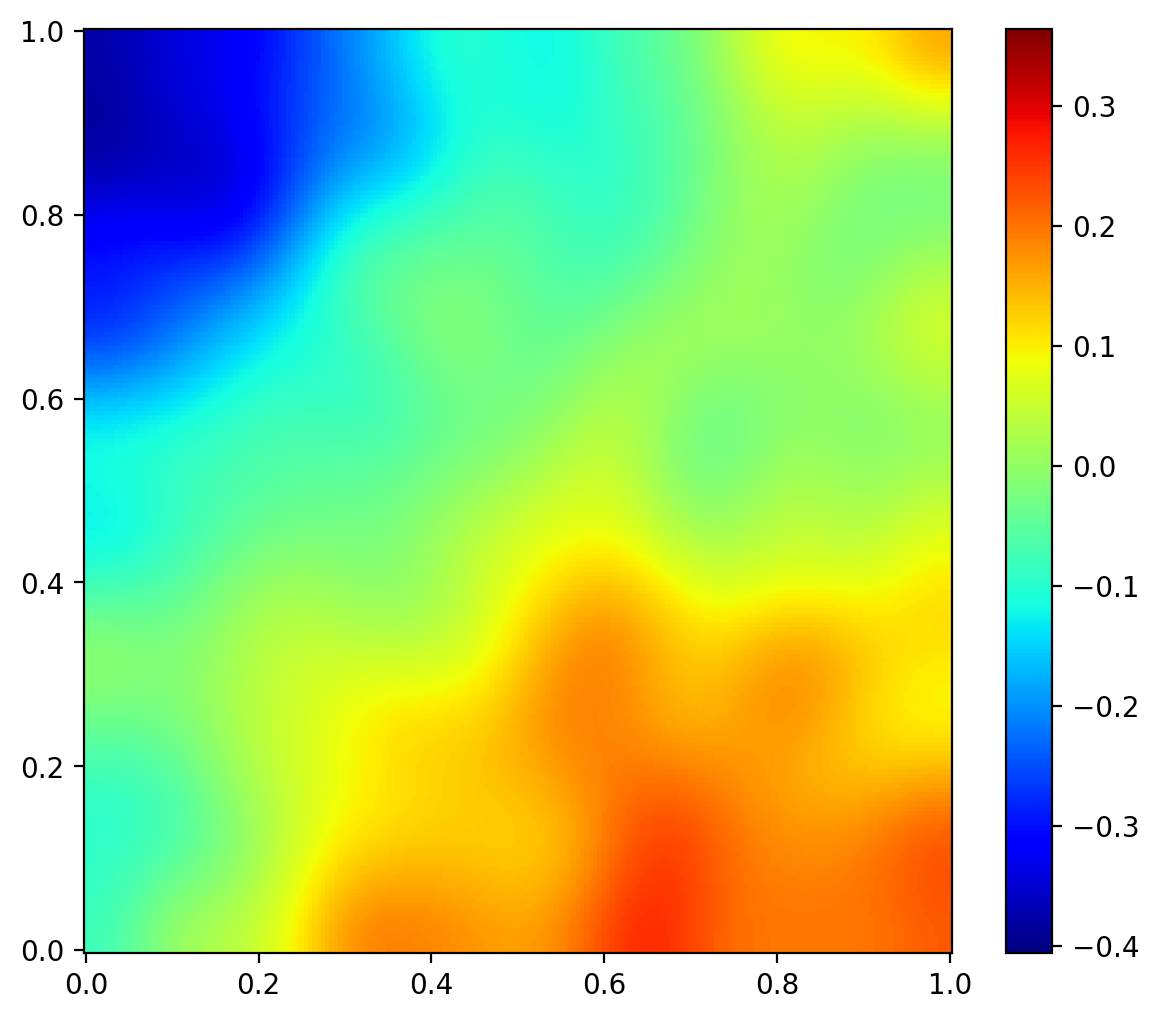}
            \put (10,88) {\footnotesize \bf DeepOnet-UKI-Direct}
        \end{overpic}
    \end{center}
    \begin{center}
        \begin{overpic}[width=0.305\textwidth]{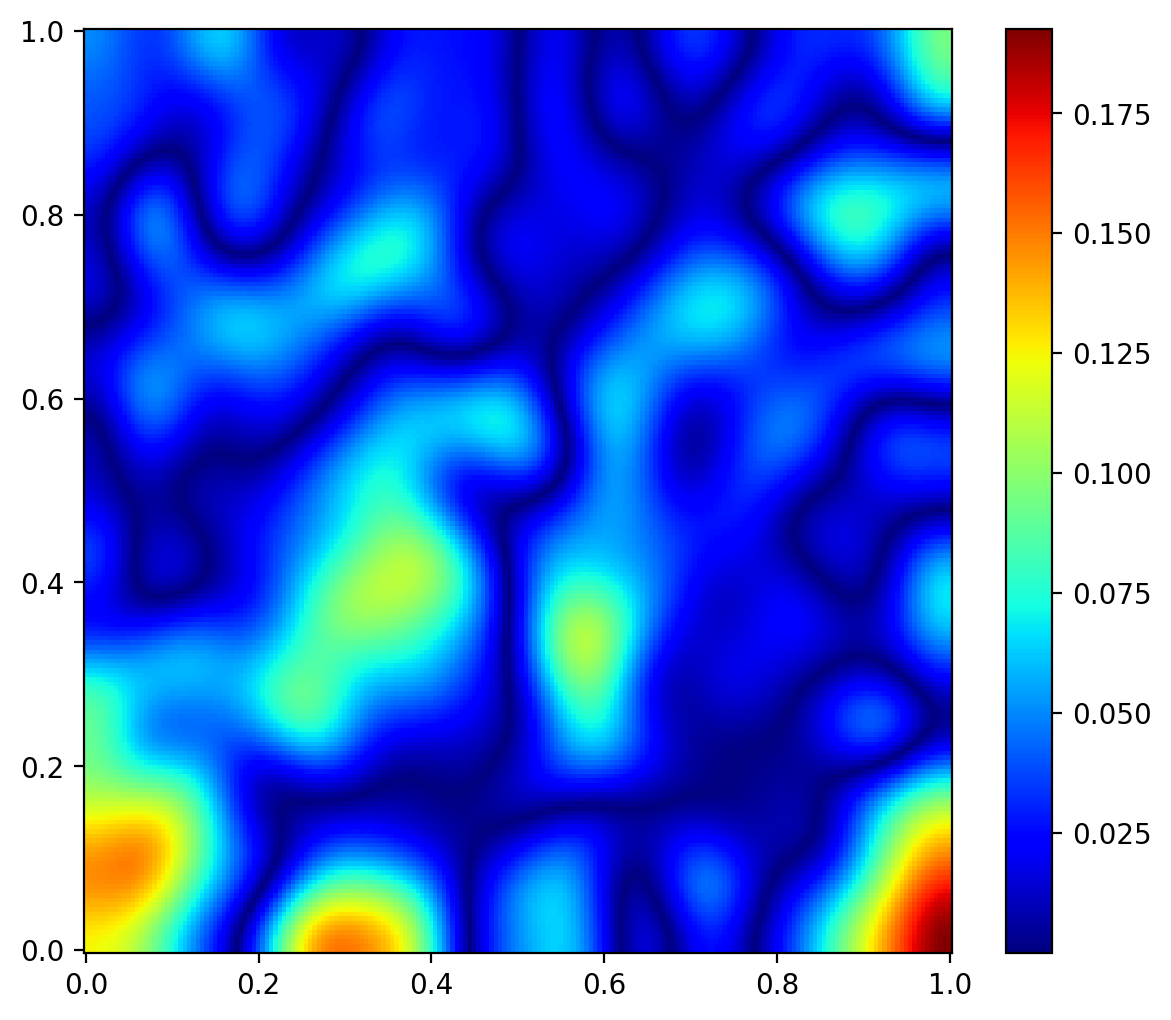}
        \end{overpic}
        \begin{overpic}[width=0.3\textwidth]{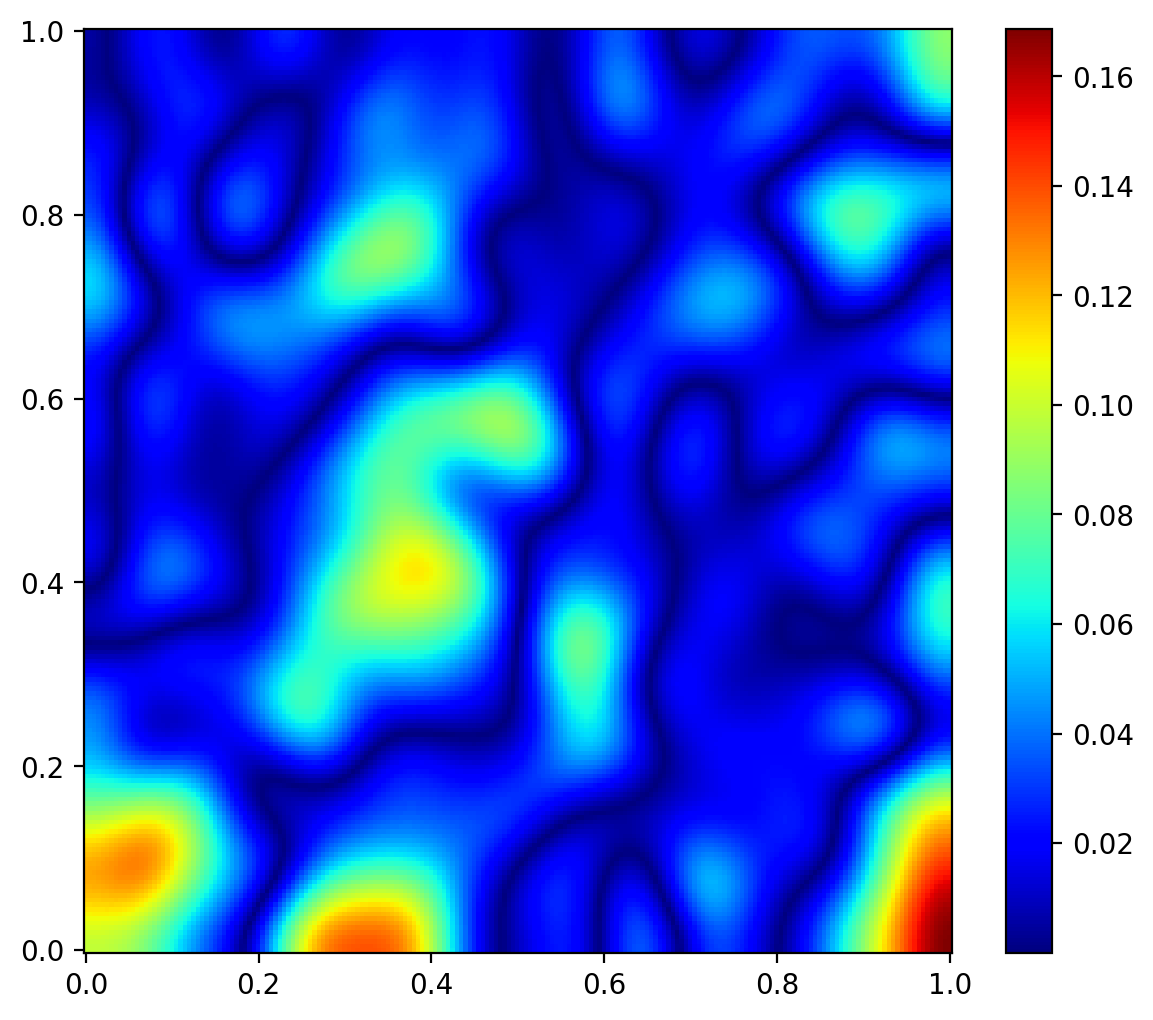}
        \end{overpic}
                \begin{overpic}[width=0.30\textwidth]{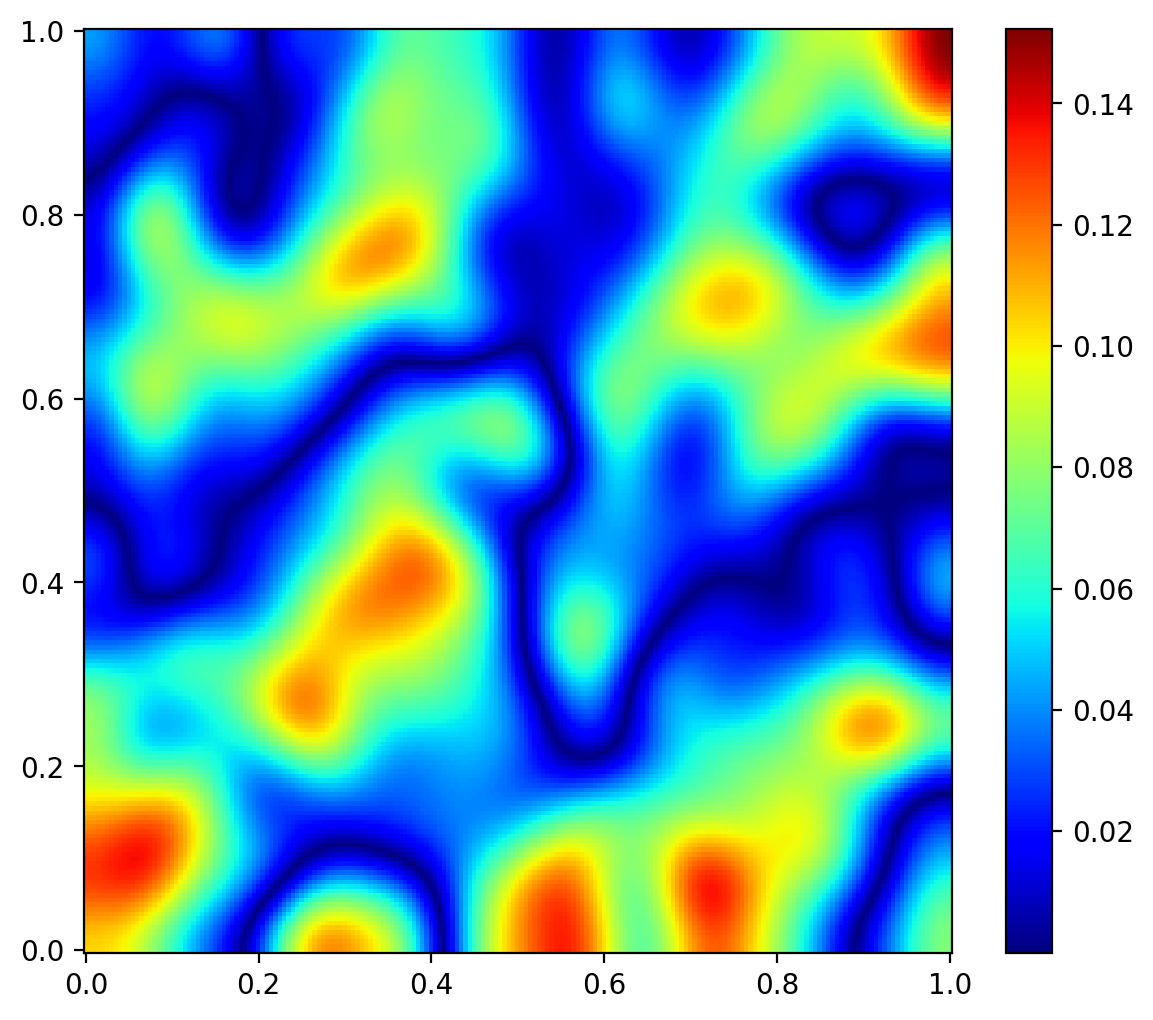}
        \end{overpic}
    \end{center}
    \vspace{-0.3cm}
    \caption{ {\bf IDD  case.} Above: the estimated permeability field obtained by different methods. Below: the absolute errors with respect to the true ones.}
    \label{kappa_flow_in_distribution}
\end{figure}
\begin{figure}[htbp]
    \begin{center}
        \vspace{0.3cm}
        \begin{overpic}[width=0.3\textwidth]{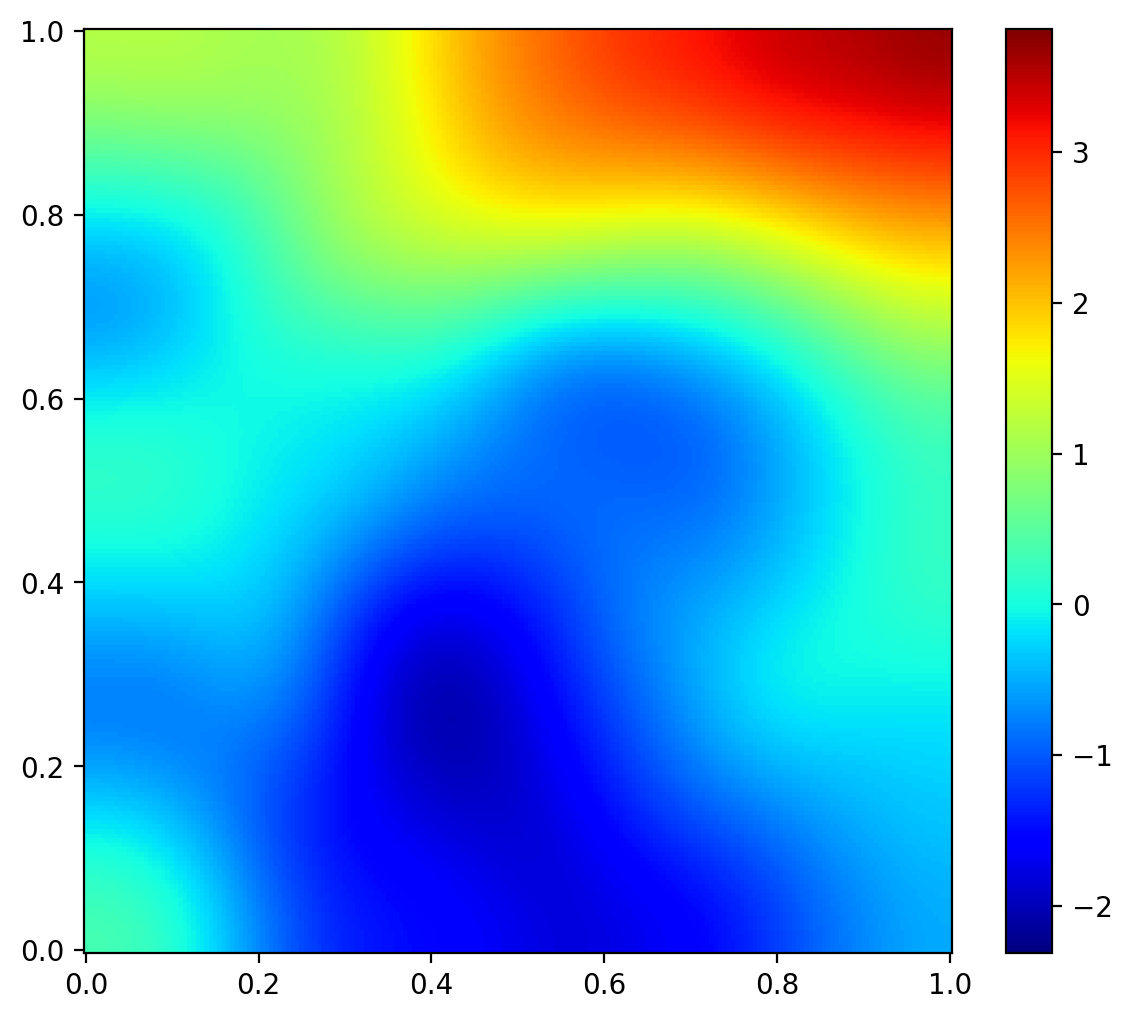}
        \put (30,90) {\footnotesize \bf FEM-UKI}
        \end{overpic}
        \begin{overpic}[width=0.3\textwidth]{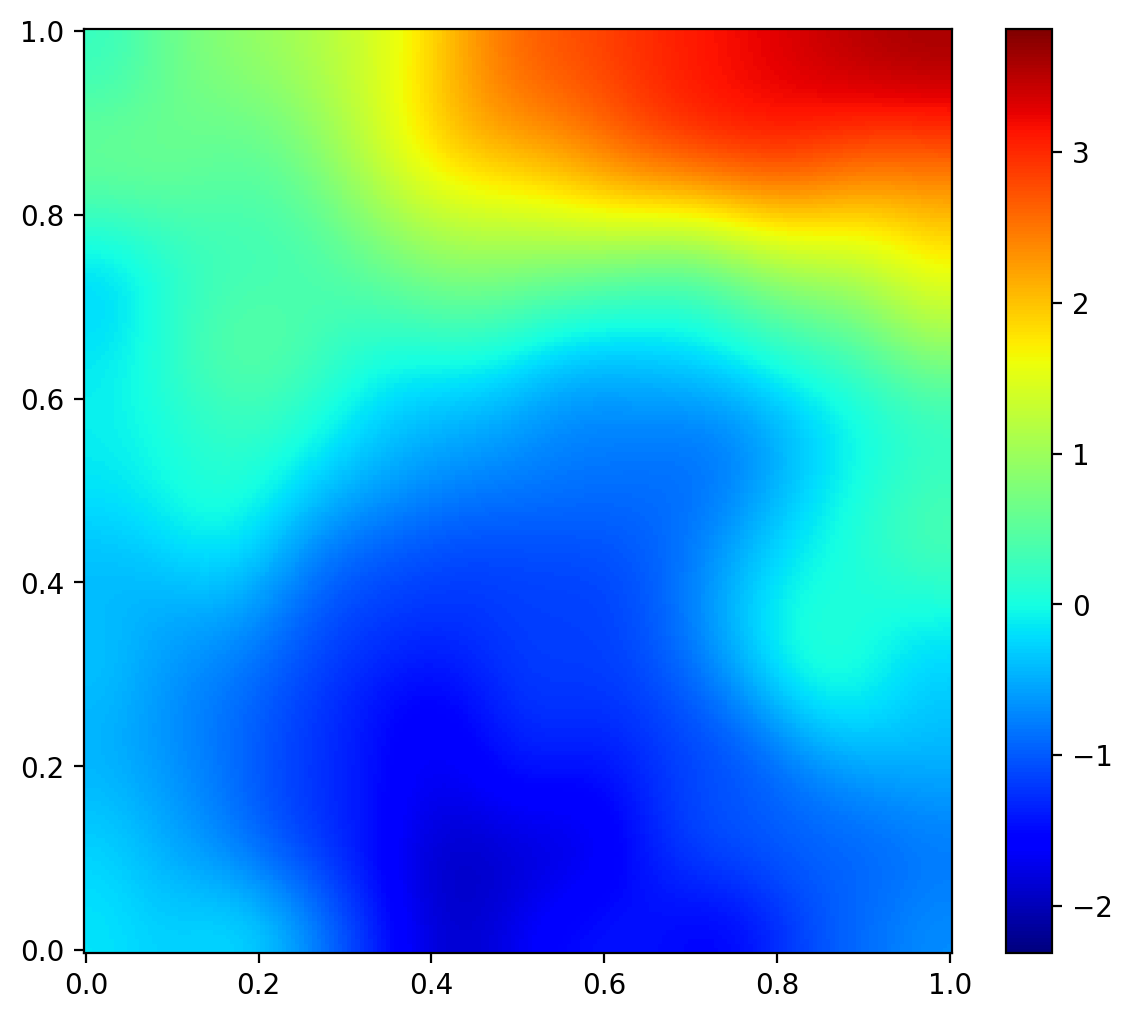}
            \put (8,90) {\footnotesize \bf DeepOnet-UKI-Adaptive}
        \end{overpic}
                \begin{overpic}[width=0.30\textwidth]{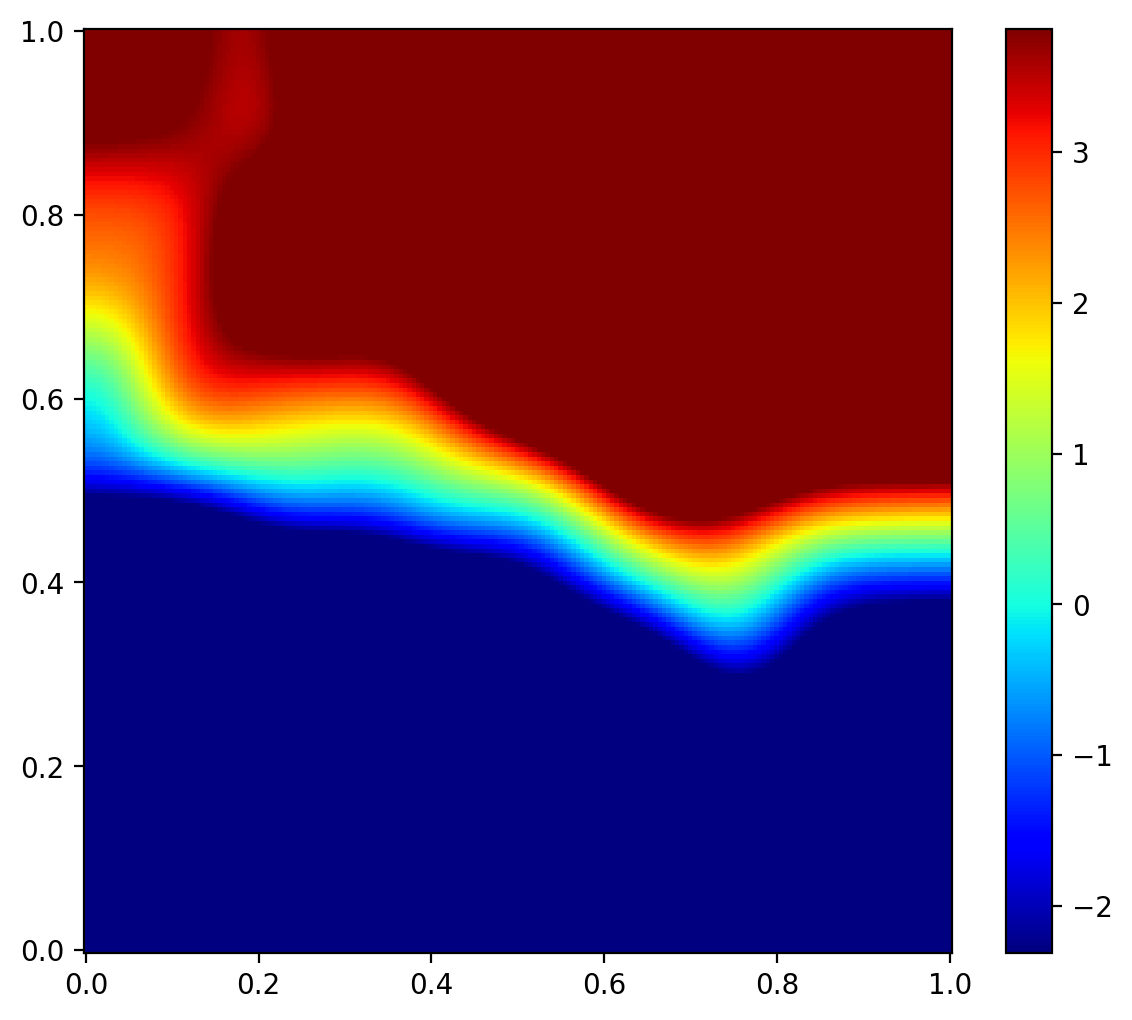}
            \put (10,90) {\footnotesize \bf DeepOnet-UKI-Direct}
        \end{overpic}
    \end{center}
    \begin{center}
        \begin{overpic}[width=0.31\textwidth]{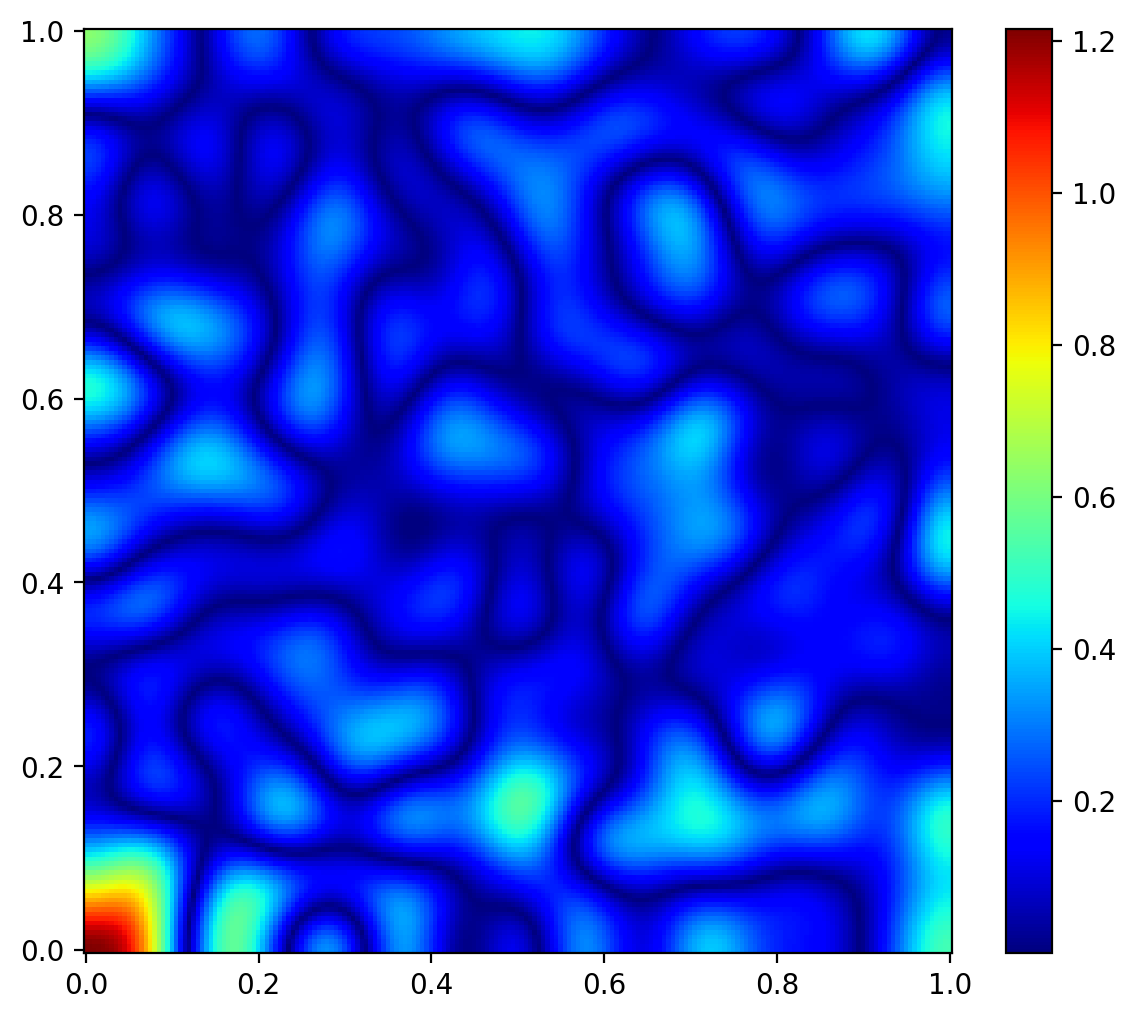}
        \end{overpic}
        \begin{overpic}[width=0.3\textwidth]{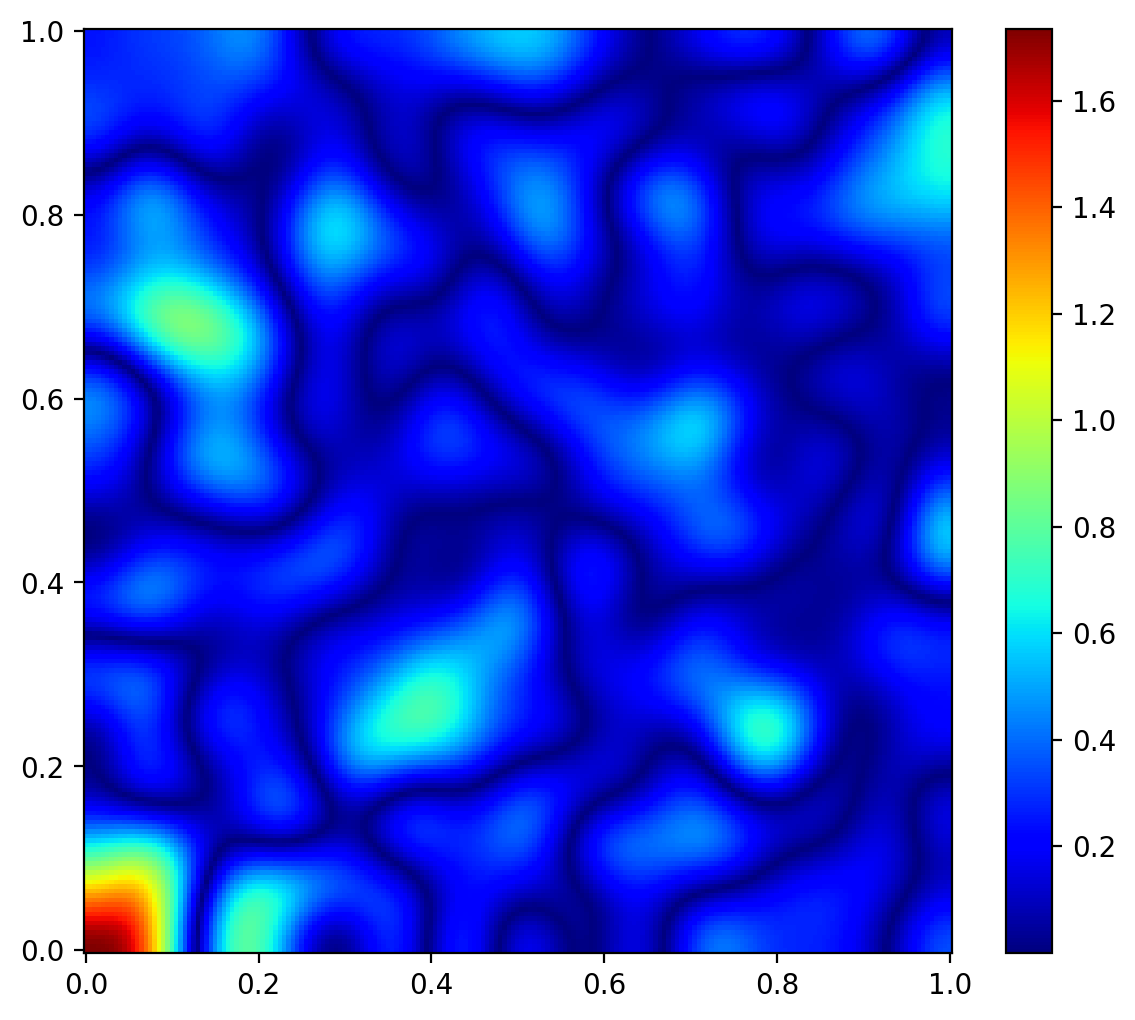}
        \end{overpic}
         \begin{overpic}[width=0.295\textwidth]{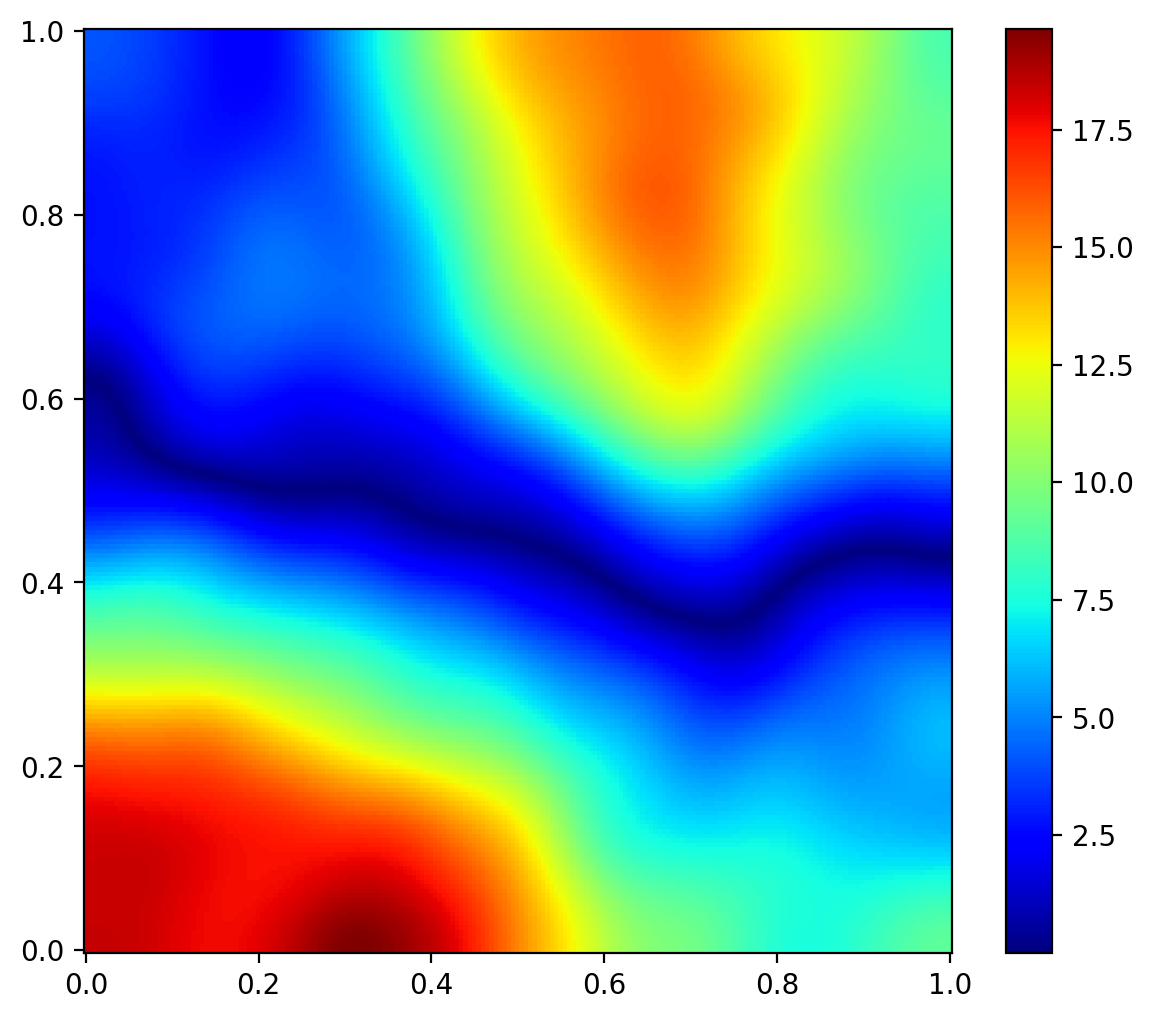}
        \end{overpic}
    \end{center}
    \caption{{\bf  OOD case.}  Above: the estimated permeability field obtained by different methods. Below: the absolute errors with respect to the true ones.}
    \label{kappa_flow}
\end{figure}

\begin{figure}[htbp]
    \centering 
    \begin{overpic}[width = 0.406\textwidth]{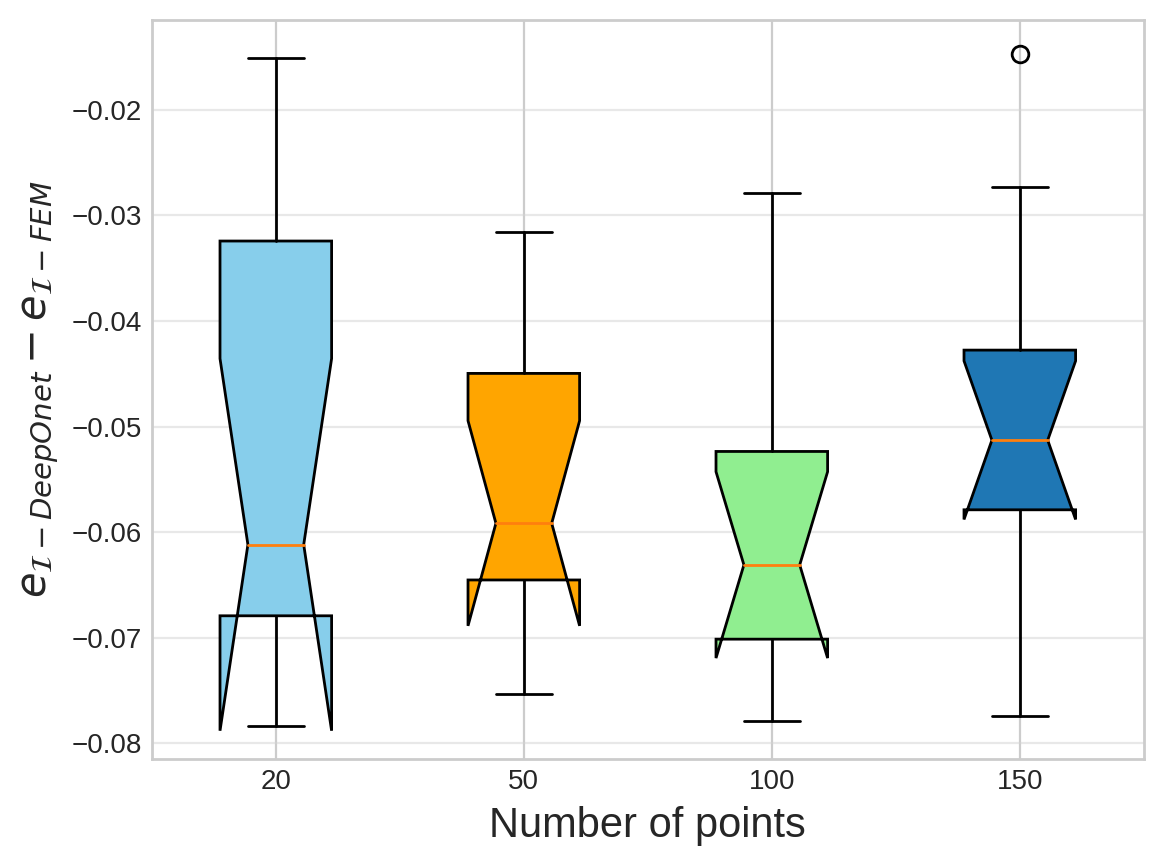}
    \put (40,76) {\footnotesize \bf IDD case}
    \end{overpic}
    \begin{overpic}[width = 0.4\textwidth]{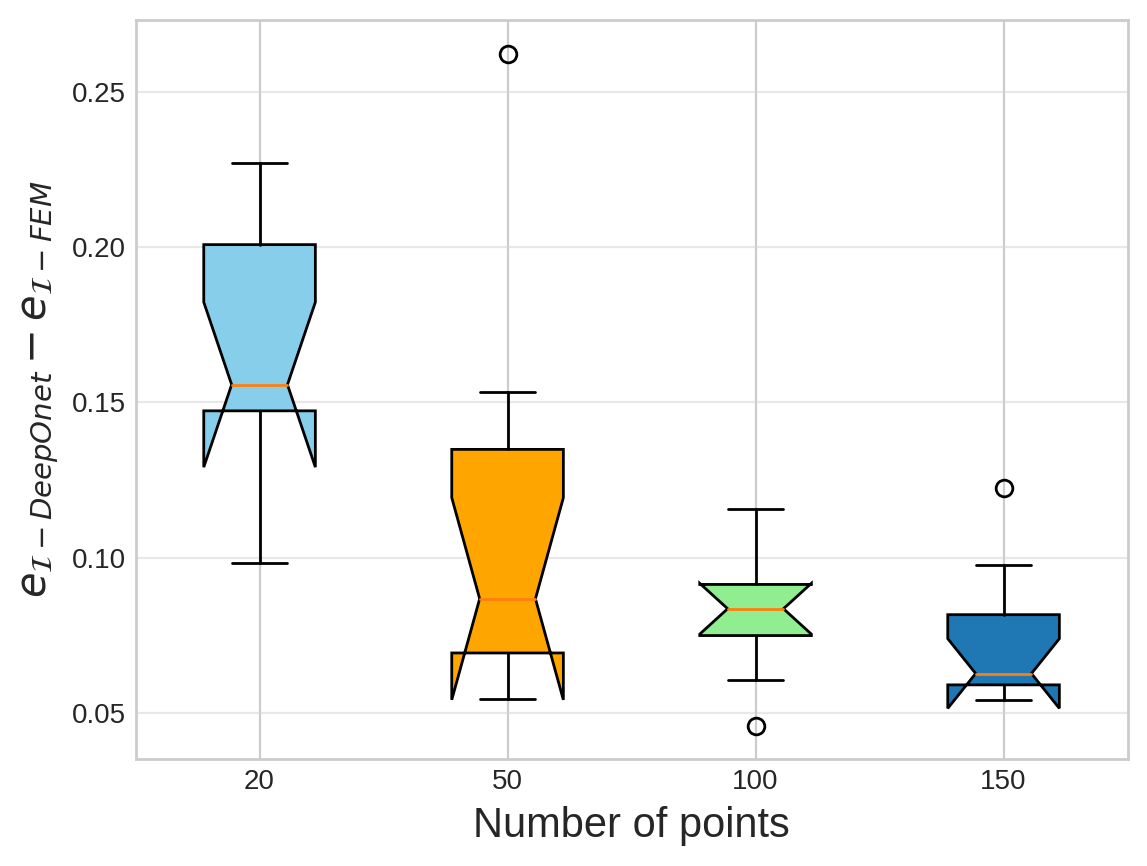}
    \put (40,77) {\footnotesize \bf OOD case}
    \end{overpic}
    \begin{overpic}[width = 0.4\textwidth]{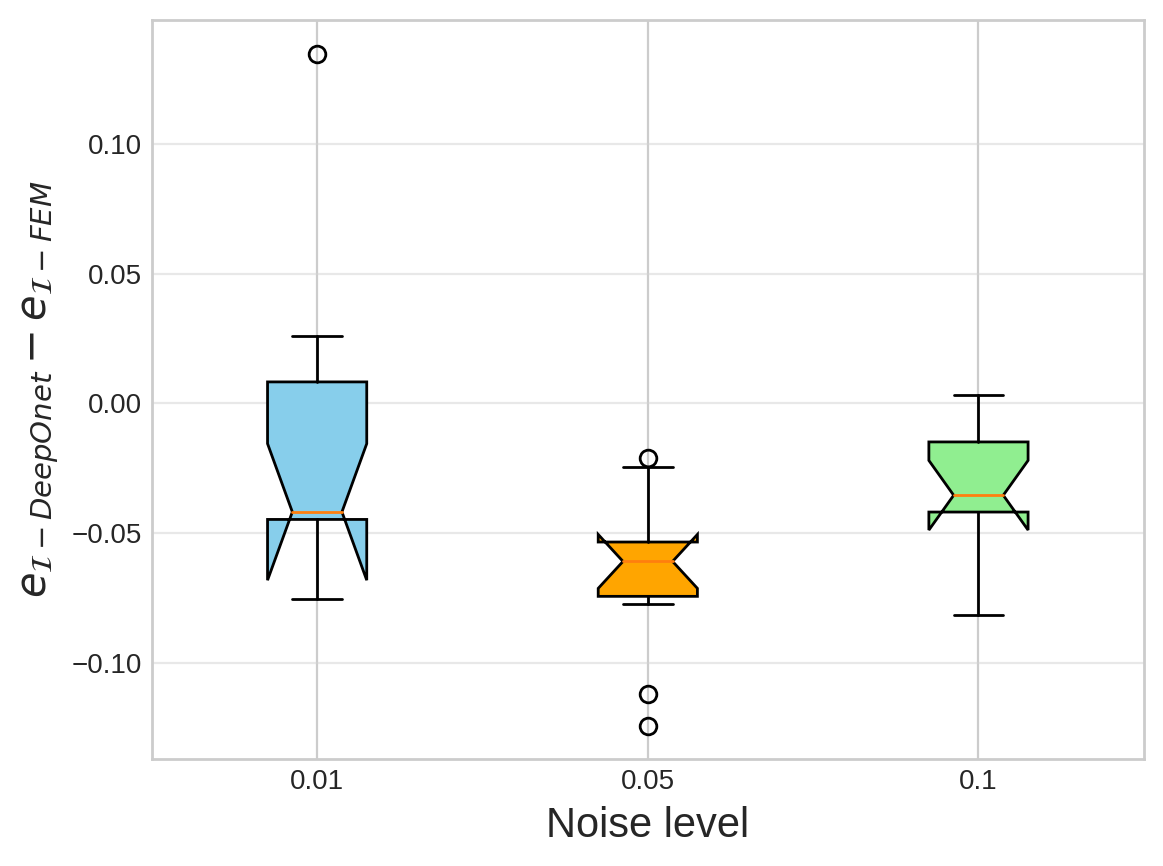}
    \end{overpic}
    \begin{overpic}[width = 0.405\textwidth]{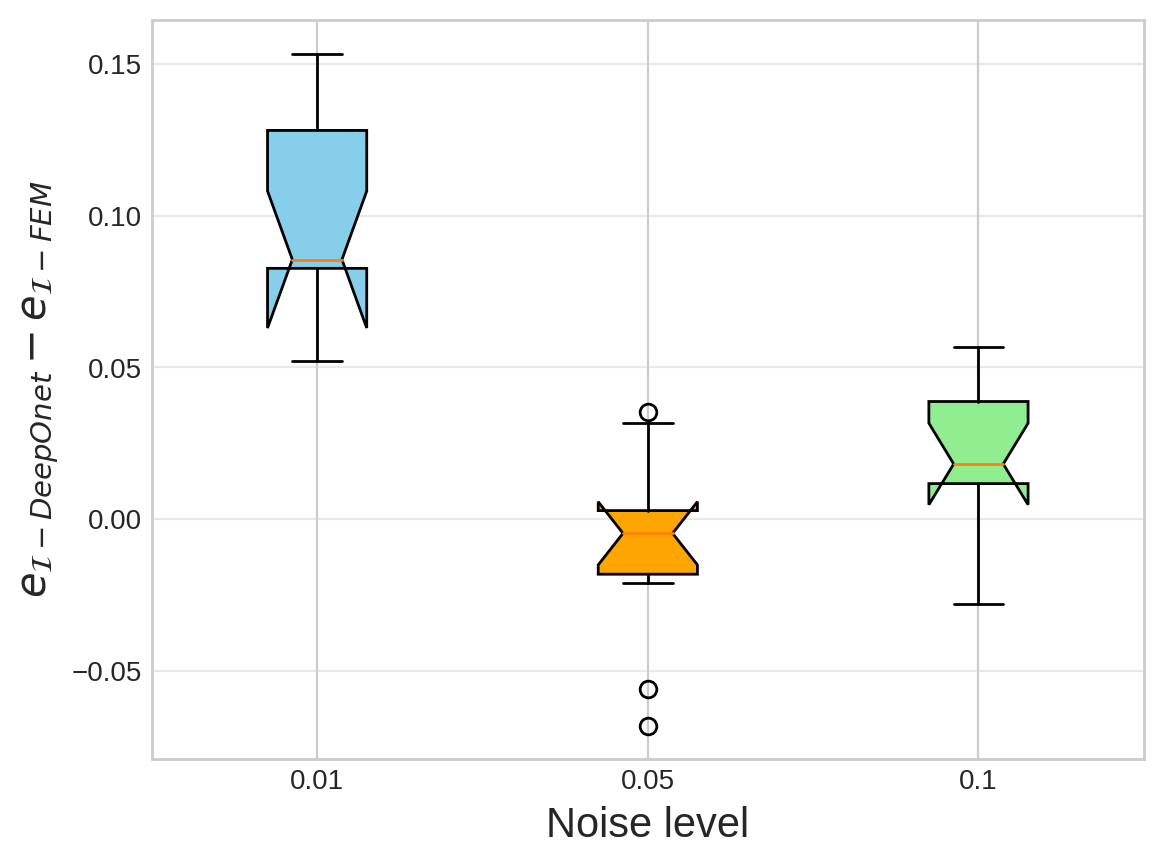}
    \end{overpic}
    \caption{The error box plot of $e_{\mathcal{I}-DeepOnet} - e_{\mathcal{I}-FEM}$ with different numbers  of  points $Q$(above) and noise levels (below). Left:  IDD case. Right: OOD case.}
    \label{num_flow}
\end{figure}
\begin{figure}[htbp]
    \begin{center} 
    \begin{overpic}[width = 0.4\textwidth]{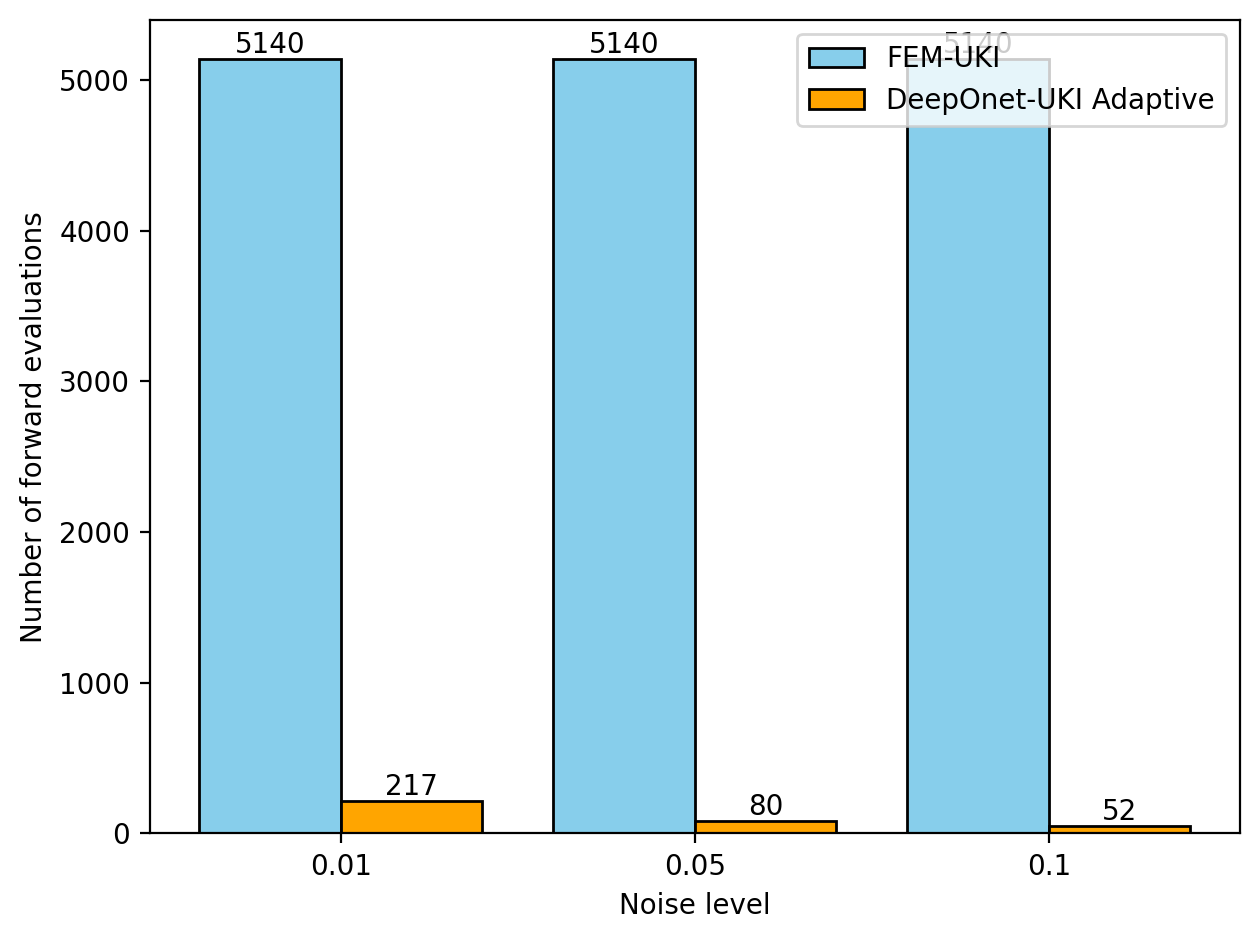}
    \put(40, 78){\footnotesize \bf IDD case}
    \end{overpic}
    \begin{overpic}[width = 0.406\textwidth]{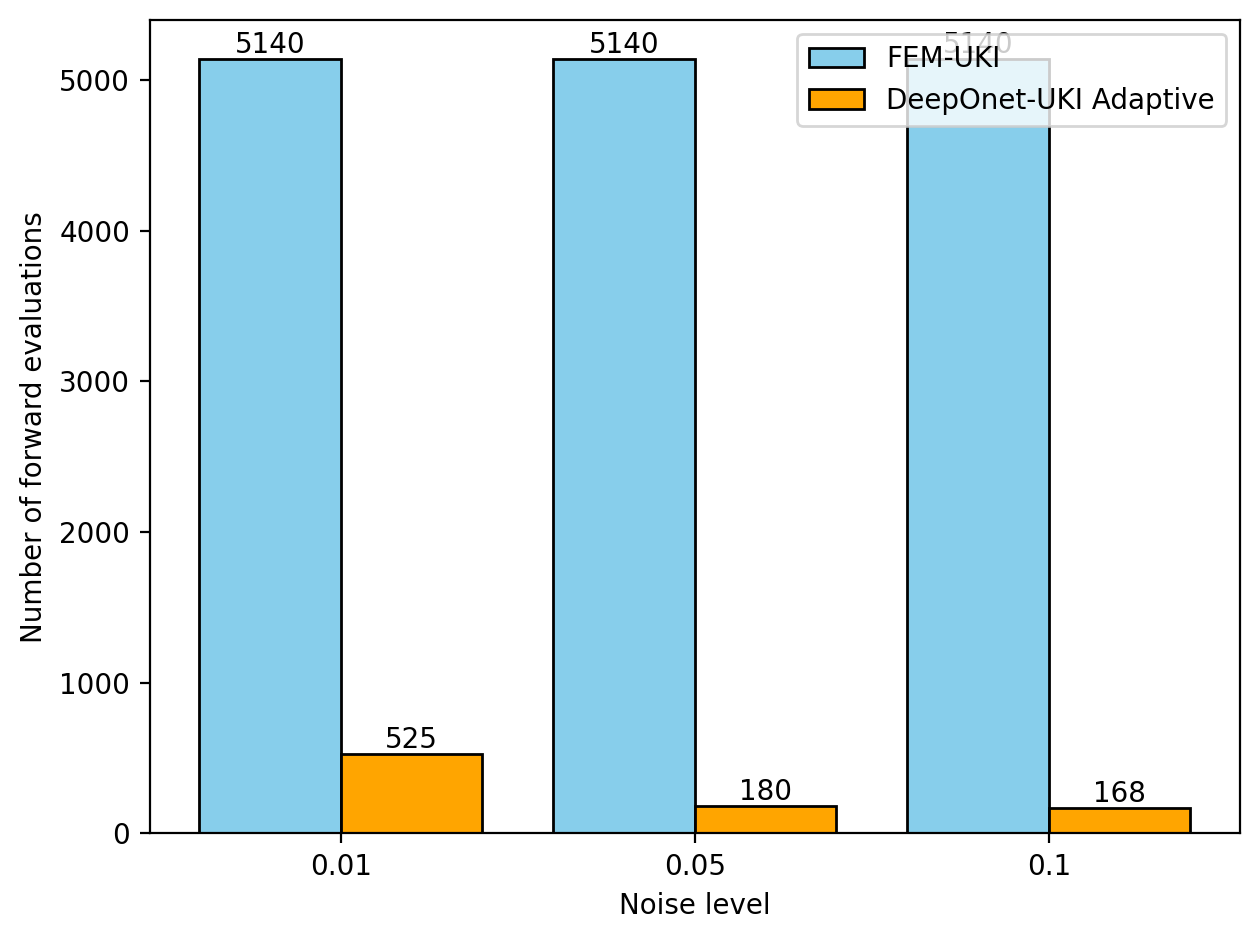}
    \put(40, 78){\footnotesize \bf OOD case}
    \end{overpic}
    \end{center}
    \caption{The mean total number of forward evaluations for FEM-UKI and DeepOnet-UKI- Adaptive respectively.  Left: IDD case and Right: OOD case. }
    \label{err_comparison}
\end{figure}

\begin{figure}[htbp]
    \centering 
    \begin{overpic}[width = 0.4\textwidth]{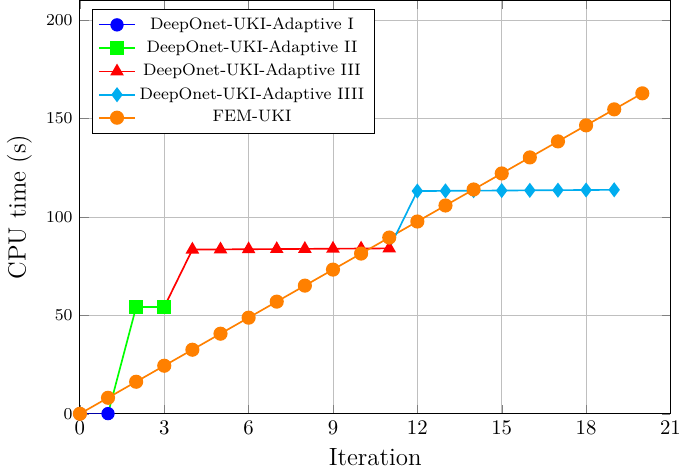}
        \put (40,70) {\footnotesize \bf IDD case}
    \end{overpic}
    \begin{overpic}[width = 0.4\textwidth]{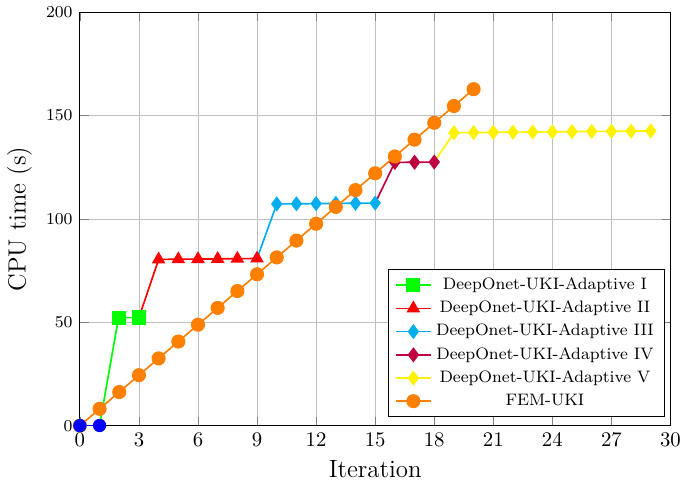}
        \put (40,70) {\footnotesize \bf OOD case}
    \end{overpic}

    \begin{overpic}[width = 0.41\textwidth]{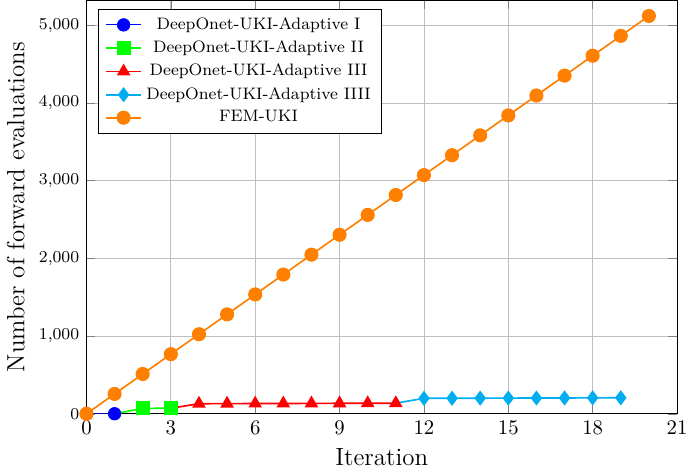}
    \end{overpic}
    \hspace{-0.2cm}
    \begin{overpic}[width = 0.41\textwidth]{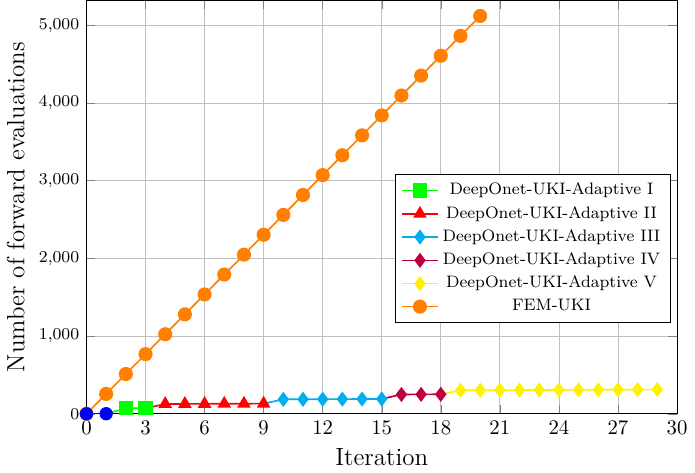}
    \end{overpic}
    \caption{Left: the average computational time and average number of forward evaluations each iteration for IDD case. Right: the average computational time and average number of forward evaluations each iteration for IDD case. Current noise level is 0.01.}
    \label{flow_computation}
\end{figure}

To test the effect of the number $Q$ of adaptive samples used in each refinement, we repeat the experiment ten times for each $Q\in[20,50,100,150]$.  The error box of $e_{\mathcal{I}-DeepOnet} - e_{\mathcal{I}-FEM}$ is plotted Fig.\ref{num_flow}. It is evident from the IDD data that the relative inversion error does not decrease significantly with increasing data. This suggests that a small set of adaptive samples-- roughly 50 -- can meet the requirements for accuracy and efficiency. On the other hand, the relative inversion error for OOD data steadily drops with increasing dataset size.  In order to  examine the effects of varying noise levels, we are going to perform the experiment with three different noise levels (0.01, 0.05, and 0.1) and then repeat it with ten different UKI initial values. The numerical results are also shown in Fig.\ref{num_flow}. We can clearly observe that the relative inversion error gradually drops as noise levels rise, suggesting that higher noise levels are less sensitive  to model errors. Consequently, our framework performs better in real-world applications with higher noise levels. 

To test the computational efficiency of our adaptive framework,  we plot the mean total online forward evaluations in Figure \ref{err_comparison}. Our method incurs a significantly lower cost compared to conventional numerical methods, even when adaptive refinement is applied. Specifically, our approach requires a maximum of only 50 samples for model refinement, whereas FEM-UKI demands 5,140 forward evaluations.  To further evaluate the computational efficiency of our adaptive framework, we present the average computational CPU times and the number of forward evaluations per valid iteration in Figure \ref{flow_computation}. The results show that, apart from the initial offline cost for generating data points and training, the iterative computational efficiency of our adaptive framework surpasses traditional FEM solvers for both IDD and OOD cases. Given that the online fine-tuning process is highly efficient, with retraining taking just a few seconds, the cost of these forward simulations becomes negligible. Moreover, even when considering the offline cost, the total number of forward evaluations required by our framework is substantially lower than that of FEM-UKI, indicating that our scheme offers superior efficiency, particularly when the underlying PDE is expensive to solve.

\subsection{Example 2: The heat source inversion problem}
 \label{Heat_source}
Consider the following  heat conduction problem in $\Omega$
 \begin{equation}
    \label{heat_equation}
    \begin{aligned}
    u_{t}(\mb{x}, t) - \Delta u(\mb{x}, t) &= f(\mb{x},t), &&\text{ in } \Omega\times[0,1], \\ 
    u(\mb{x}, 0) &= \textcolor{black}{u_0(\mb{x})},&&\text{ in } \Omega.
    \end{aligned}
 \end{equation}
The objective is to identify the heat source $f$ from noisy measurements. To illustrate our method more clearly, we divide this inversion task into two cases. In both cases, the surrogate is the DeepOnet model with same architectures. The FEM method is used to solve the forward problem on a $70\times70$ grid and the resulting differential equations are integrated using the implicit-Euler scheme.

\textbf{Case I:}  In this case, we consider a 2D heat source inversion problem which is adapted from\cite{marzouk2007stochastic}.  Take $f = \frac{s}{2\pi\sigma^2}\exp\left(-\frac{\left|\boldsymbol{\chi}-\mathbf{x}\right|^2}{2\sigma^2}\right)[1-H(t-T)]$ with zero initial condition and zero Neumann boundary condition, where $H$ is the heavy-side function. Take $s = 5, \sigma = 0.1, T = 0.05$. The inverse problem is to infer the location by giving the observations at $t = 0.05$ and $t = 0.15$. In this paper, we take the ground truth to be $\mathbf{x}_{true} = [0.2, 0.2]$. A uniform $3 \times 3$ sensor network is used to collect noisy point-wise observations from the PDE field solution. At each sensor location, two measurements are taken at $t = 0.05$ and $t = 0.15$, for a total of 18 measurements. 

To replace the forward model, DeepOnet is trained on $[0.5, 1]\times [0.5, 1]$ with 500 uniformly distributed samples. We use UKI with our adaptive scheme to run the experiment with initial value $[0.6, 0.6]$. The numerical results are shown in Fig.\ref{source}. The DeepOnet-UKI-Direct method can provide only a rough estimate. While with our adaptive refinement, the DeepOnet-UKI-Adaptive can nearly achieve the same accuracy as FEM-UKI. That is because the adaptive sampling can select import samples in the high density area of the approximate posterior and the surrogate is refined to reduce the local model error. This phenomenon can be further observed in Fig.\ref{path}, which plots the inversion trajectories and samples distribution for our adaptive method. We can clearly see that with refinements, the surrogate gradually become accurate in the high density area of the true posterior, which can modify the inversion trajectory and thus lead to more accurate results compared to DeepOnet-UKI-Direct.

\begin{figure}[htbp]
    \centering 
    \begin{overpic}[width = 0.3\textwidth]{./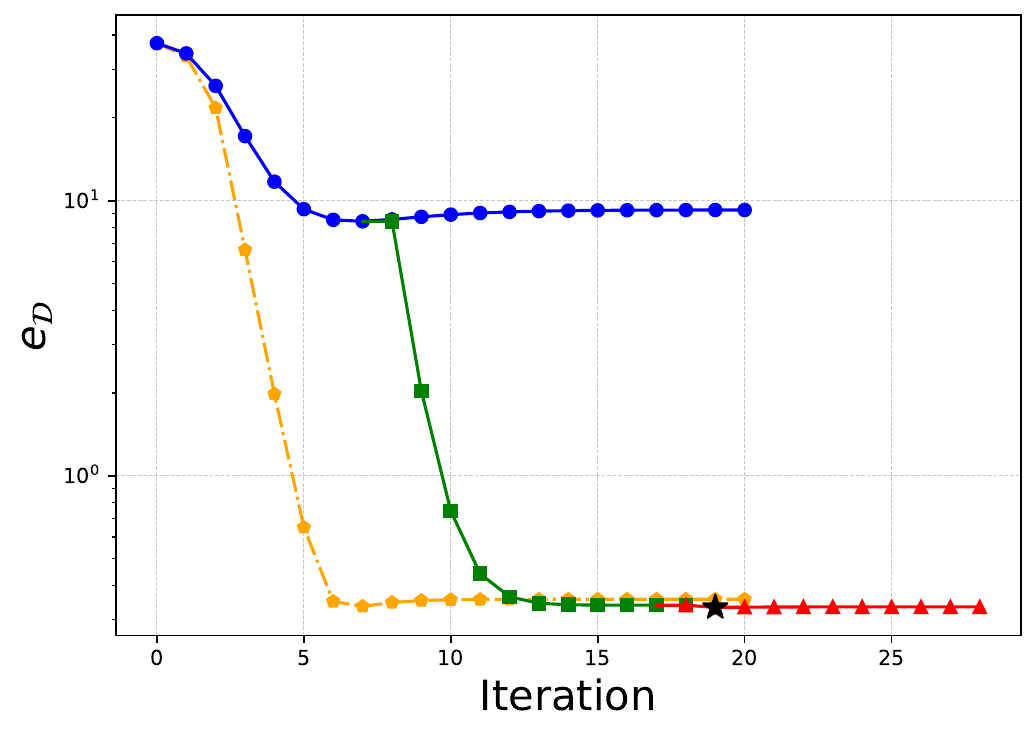}
    \put (35,72) {\scriptsize {\bf fitting error}}
    \end{overpic}
    \begin{overpic}[width = 0.3\textwidth]{./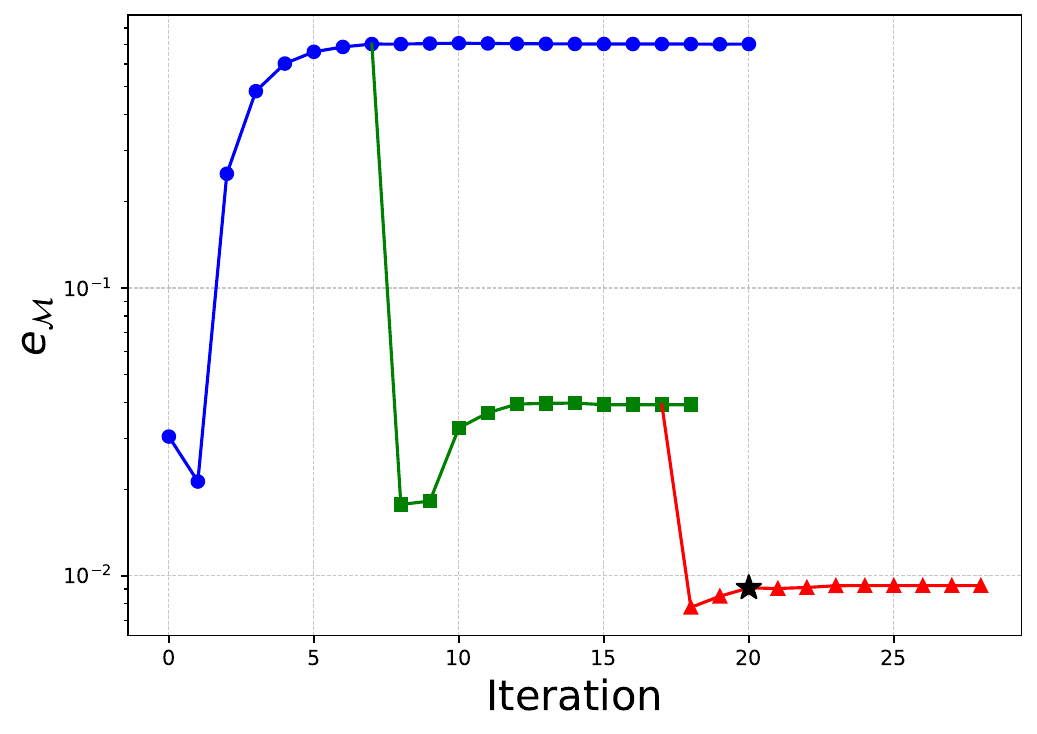}
    \put (35,72) {\scriptsize {\bf model error}}
    \end{overpic}
    \begin{overpic}[width = 0.3\textwidth]{./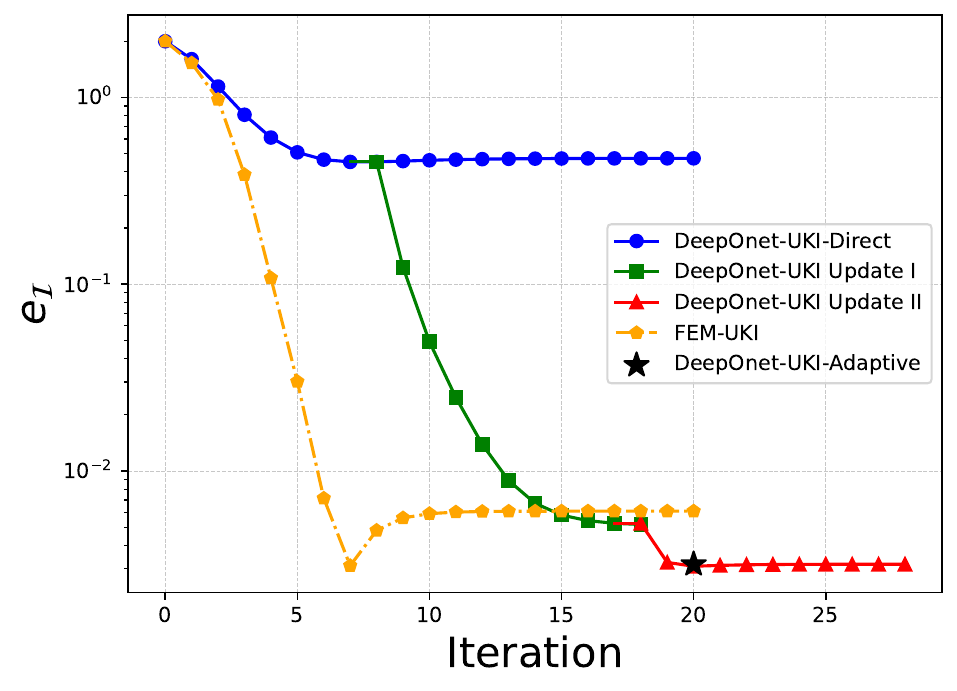}
    \put (20,72) {\scriptsize {\bf relative inversion error}}
    \end{overpic}
    \caption{The fitting error, model error and relative inversion error from left to right respectively.}
    \label{source}
    \end{figure}
    
    \begin{figure}[htbp]
        \centering 
        \begin{overpic}[width = 0.45\textwidth]{./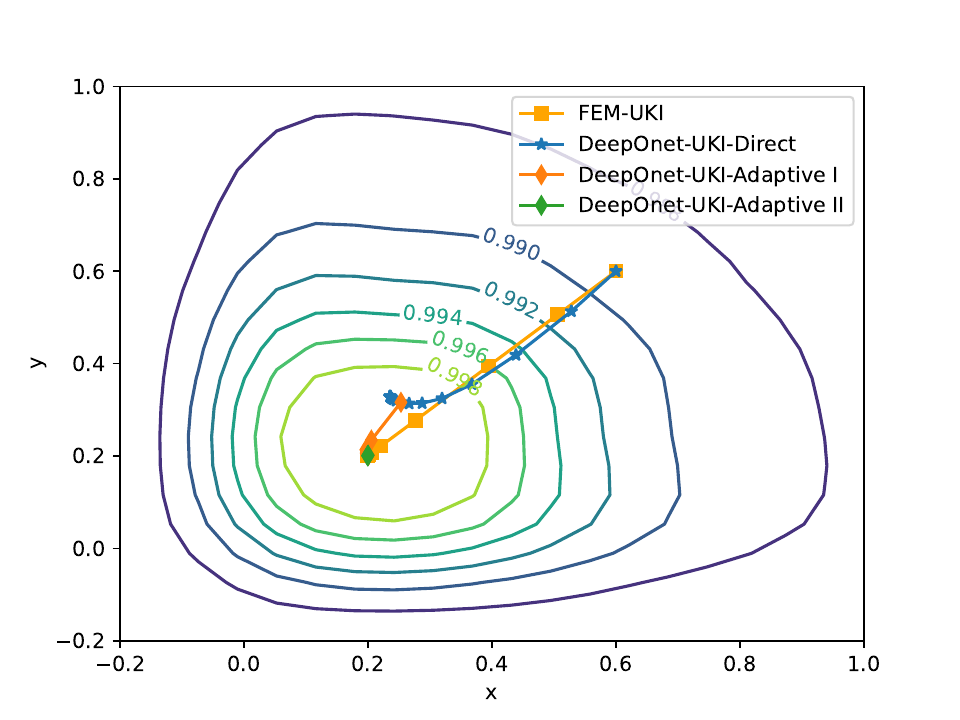}
        \end{overpic}
        \begin{overpic}[width = 0.45\textwidth]{./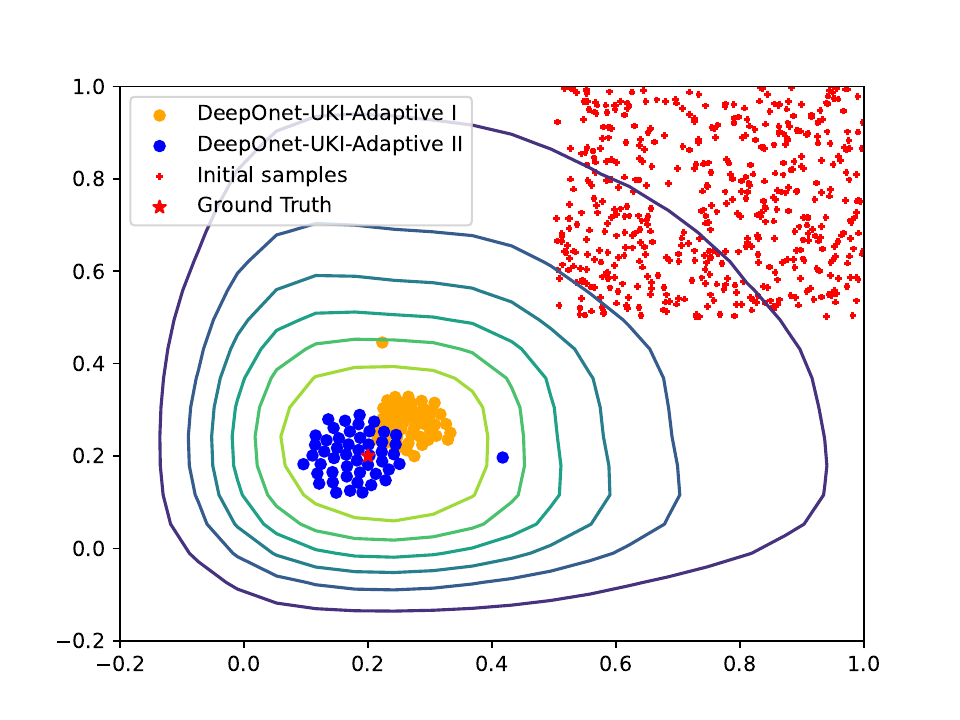}
        \end{overpic}
        \vspace{-0.3cm}
        \caption{The inversion trajectories of the three methods and the samples distribution of DeepOnet-UKI-Adaptive.}
        \label{path}
    \end{figure}

\textbf{Case II:} In this case, the heat source field is considered with the formula $e^{-t}\mb{m}(\mb{x})$ with zero Dirichlet boundary condition and initial condition $\textcolor{black}{u_0(x, y)} = 100\sin(x)\sin(y)$. Conversely, the inverse problem involves using noisy measurements of $u(\mb{x}, 1)$ to determine the true spatial source field $\mb{m}(\mb{x})$. We assume that the Gaussian random field defined in Eq.\eqref{KL} is the prior of $\mb{m}(\mb{x})$.

We assume that the ground truth $\mb{m}_{ref}(\mb{x})$ has an analytical solution in this example to increase the dimension of the problem, i.e.,
 \begin{equation}
    \mb{m}_{ref}(\mb{x}) = \sin(\pi x)\cos(\pi y).
 \end{equation}
 Using this specific solution, we generate the observations $y$ from the final temperature field $u(\mb{x}, 1)$ at 36 equidistant points in $\Omega$. Fig.\ref{heat_observations} displays the corresponding observations and the true spatial field $\mb{m}$.  In the inverse procedure, the KL expansion \eqref{KL} will be employed to approximate the true source field.  Specifically, to accomplish the inversion task, we will truncate the first 128 modes.

 \begin{figure}[t]
\centering 
\begin{overpic}[width=0.45\textwidth]{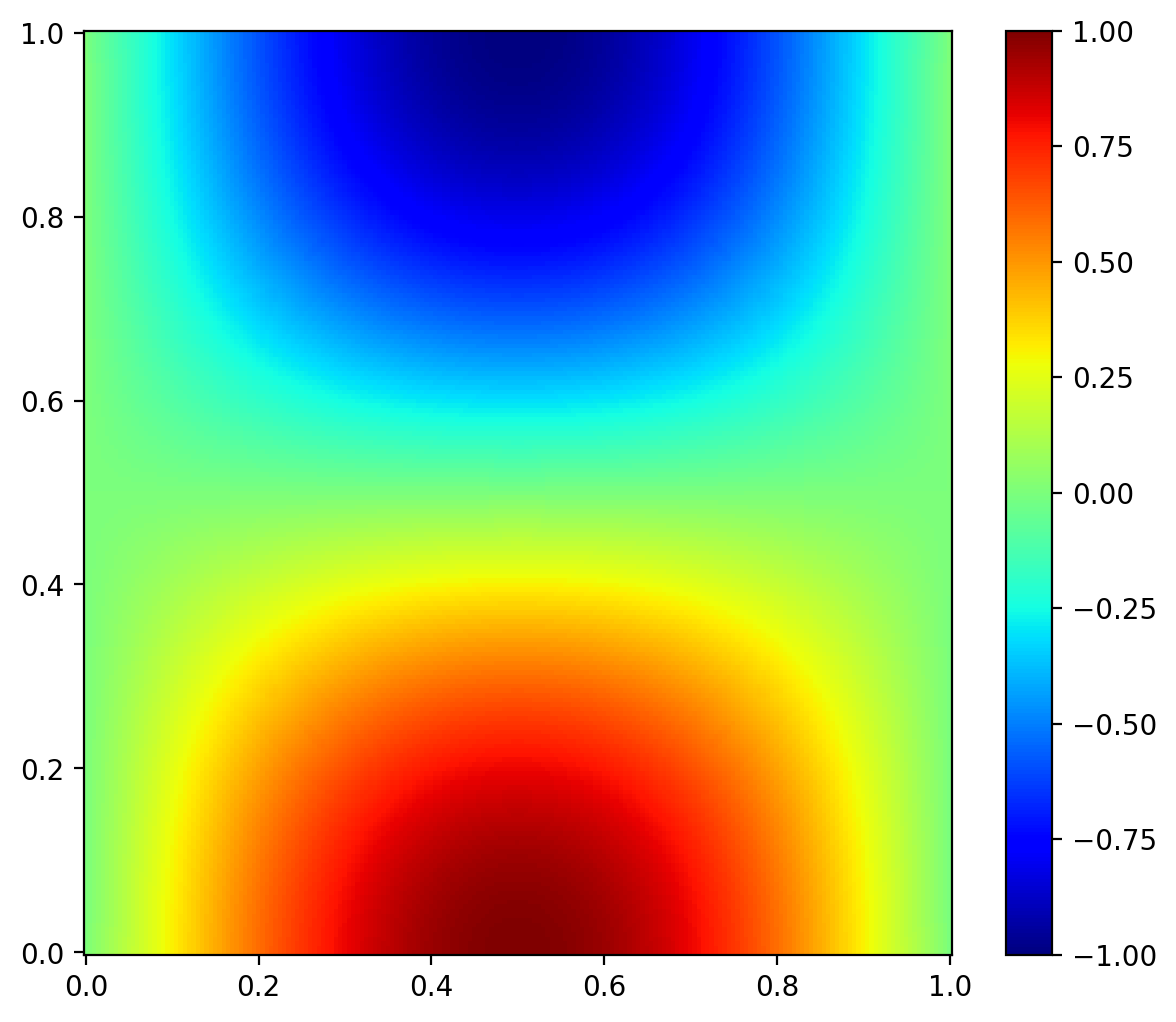}  
\end{overpic}
\begin{overpic}[width=0.47\textwidth]{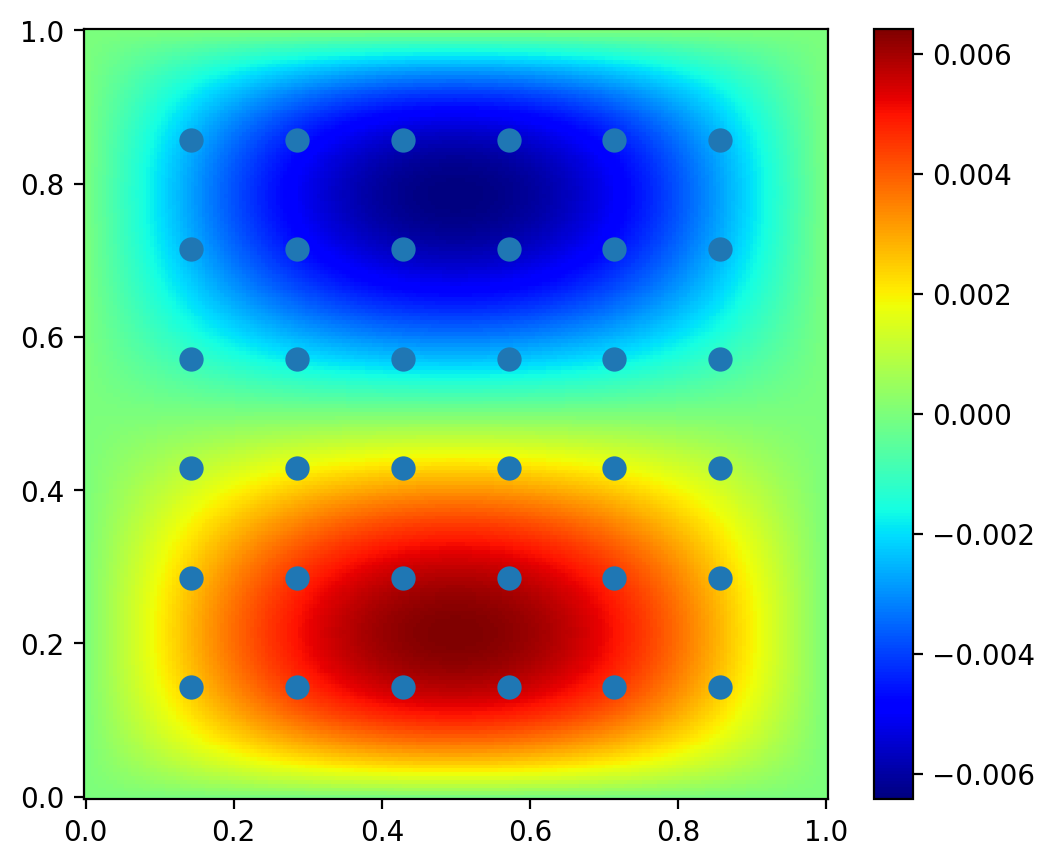}  
\end{overpic}
\caption{Example 2. Left: the true source field. Right: the temperature field $u(\mb{x}, 1)$ and the corresponding $36$ equidistant observations with noise level 0.01.}
\label{heat_observations}  
 \end{figure}
 
To test the effectiveness of our framework, we first run the experiment using the original pre-trained model directly running UKI, i.e. DeepOnet-UKI-Direct. We plot the local  model error and the relative inversion error  in the middle and right displays of Fig.\ref{loss_heat}, respectively. As expected, the local  model error will increase significantly, and the pre-trained model will eventually fail to predict the result. Because of the growing model error, the relative inversion error follows accordingly, increasing dramatically and providing a totally inaccurate final estimate.  Nonetheless, we may refine our model by constructing adaptive samples based on this estimate. The procedures are similar to Example 1. During the exploration stage, we will run UKI for 10 steps and select the {\it anchor point} with the smallest data fitting error as the initial value  for the next run of UKI.  To reduce model error, we will generate adaptive samples near the {\it anchor point} and refine the surrogate.  As shown in the middle of Fig.\ref{loss_heat}, the local model error significantly decreases after refinement. As a result, our method produces relative inversion errors that are significantly reduced after refinement and comparable to those of FEM-UKI.   Finally, after several refinements, the entire procedure is terminated based on the stop criteria, with the black star indicating the final point that we would accept for the inversion process using our DeepOnet-UKI-Adaptive algorithm. To demonstrate the effectiveness of our method, we plot the inversion results in Figs.\ref{kappa_heat} and \ref{forward_heat}. We can see that the final numerical results produced by DeepOnet-UKI-Adaptive and FEM-UKI are very similar and do not differ significantly. 
This suggests that our method can handle OOD data in closed form.

\begin{figure}[t]
    \begin{center}
        \begin{overpic}[width=0.3\textwidth]{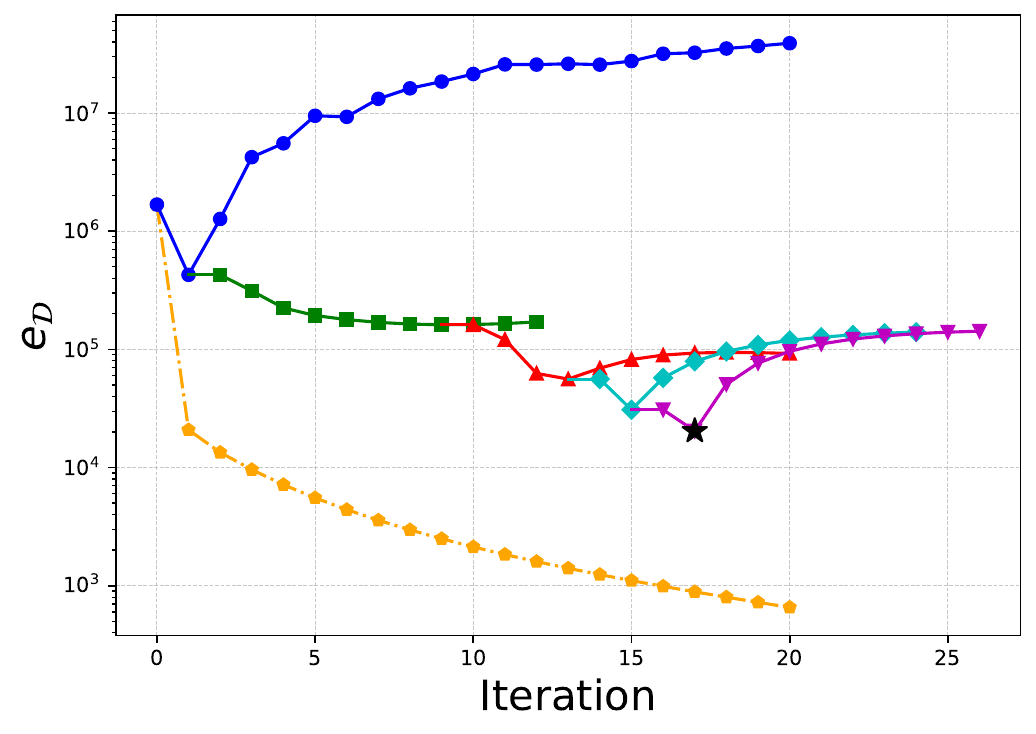}
        \put (35,72) {\scriptsize {\bf fitting error}}
        \end{overpic}
        \begin{overpic}[width=0.3\textwidth]{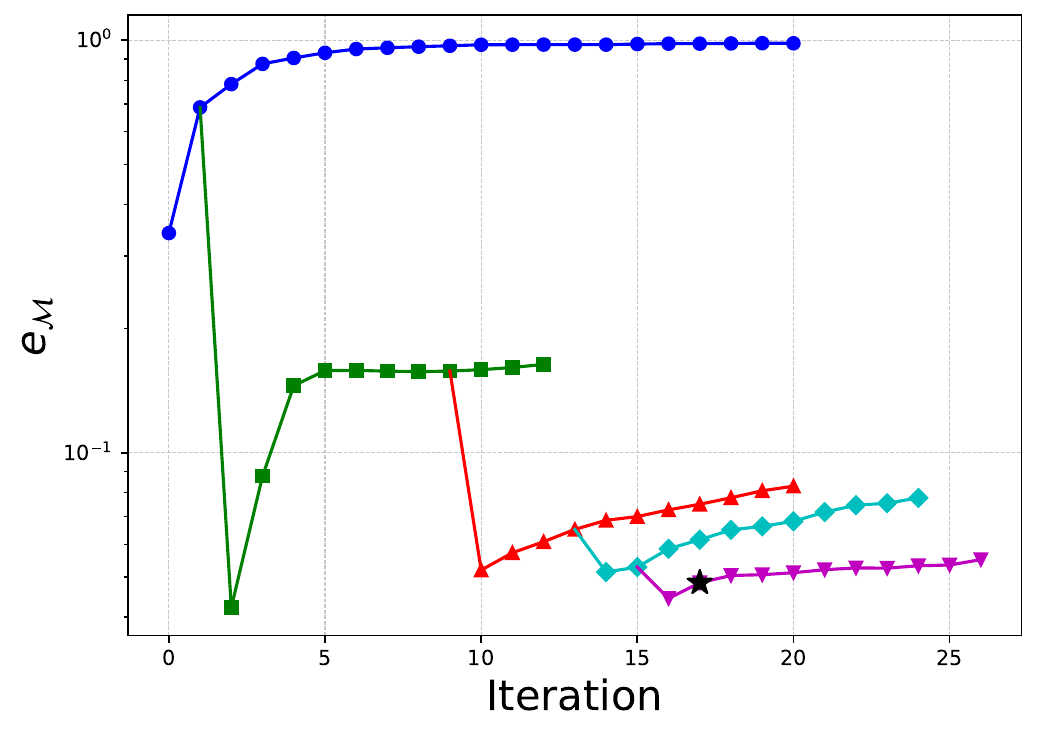}
                   \put (38,72) {\scriptsize {\bf model error}}
        \end{overpic}
        \begin{overpic}[width=0.3\textwidth]{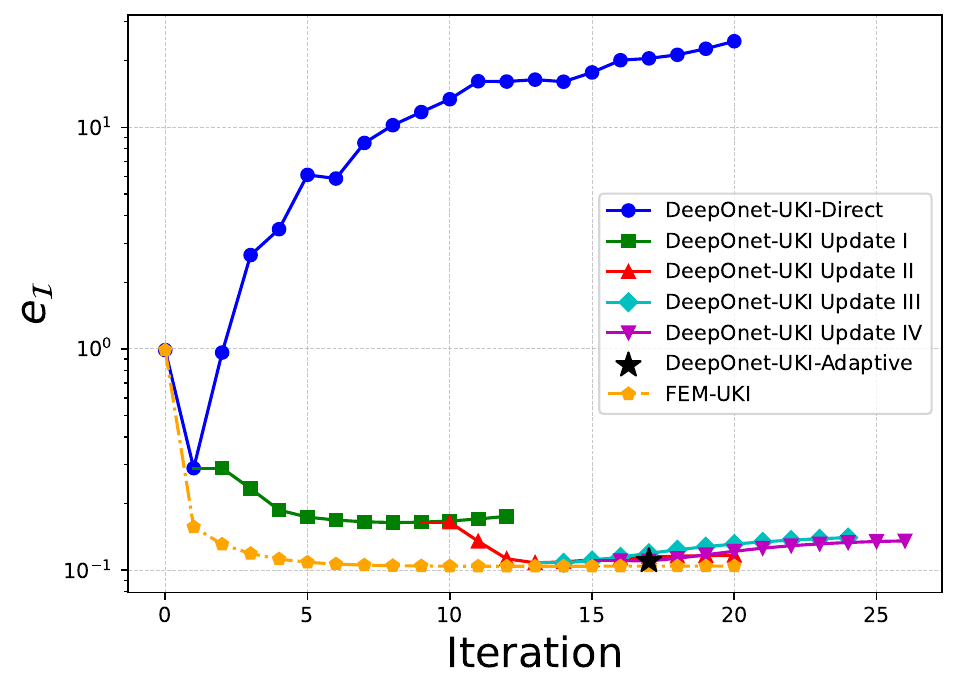}
        \put (22,72) {\scriptsize {\bf relative inversion error}}
        \end{overpic}
    \end{center}
    \caption{The inversion results obtained by the three methods for the heat source case. Left: the data fitting error. Middle: the model error. Right: the relative inversion errors.}
    \label{loss_heat}
\end{figure}

\begin{figure}[t]
    \begin{center}
        \begin{overpic}[width=0.3\textwidth]{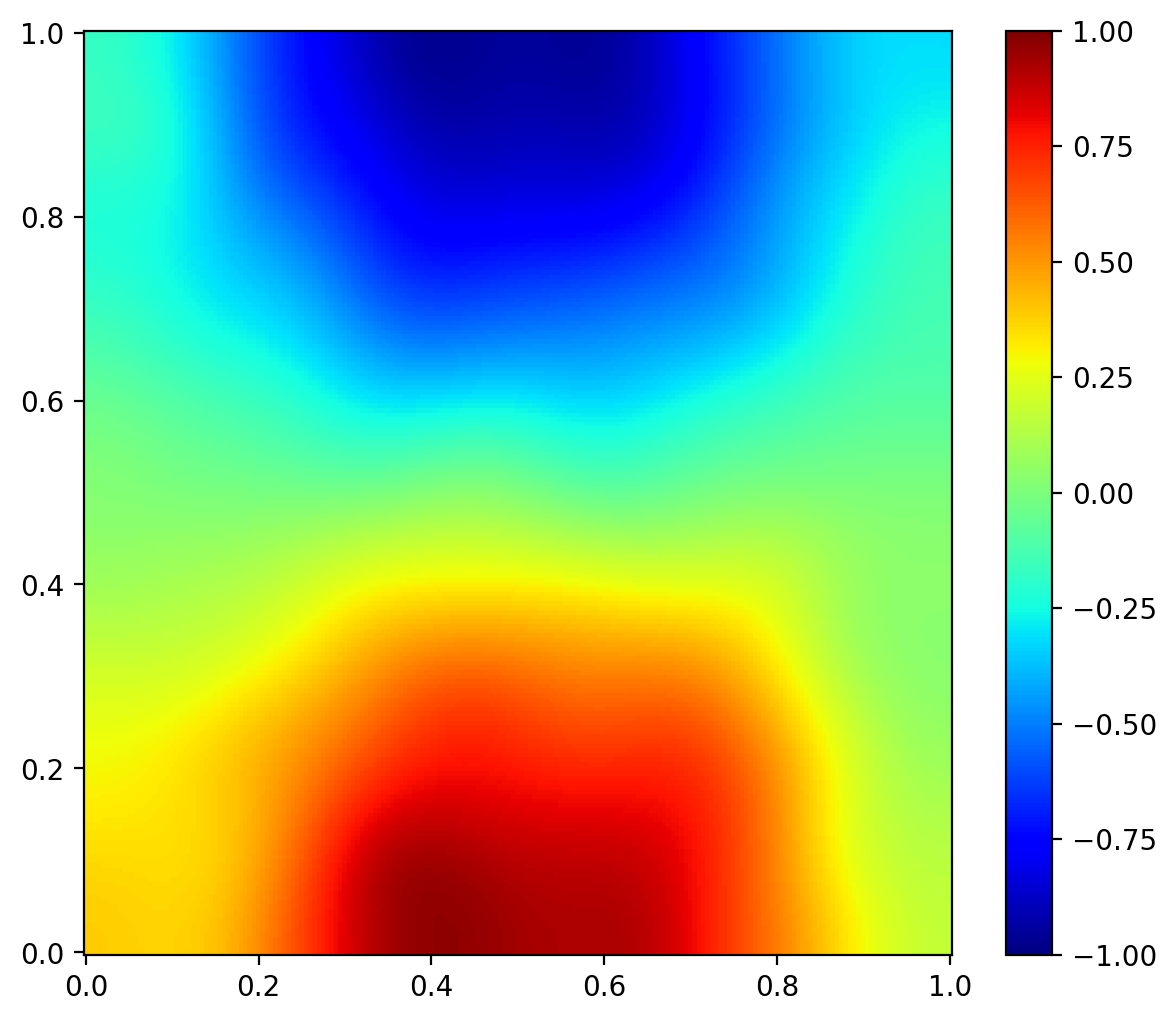}
        \put (30,87) {\footnotesize \bf FEM-UKI}
        \end{overpic}
        \begin{overpic}[width=0.3\textwidth]{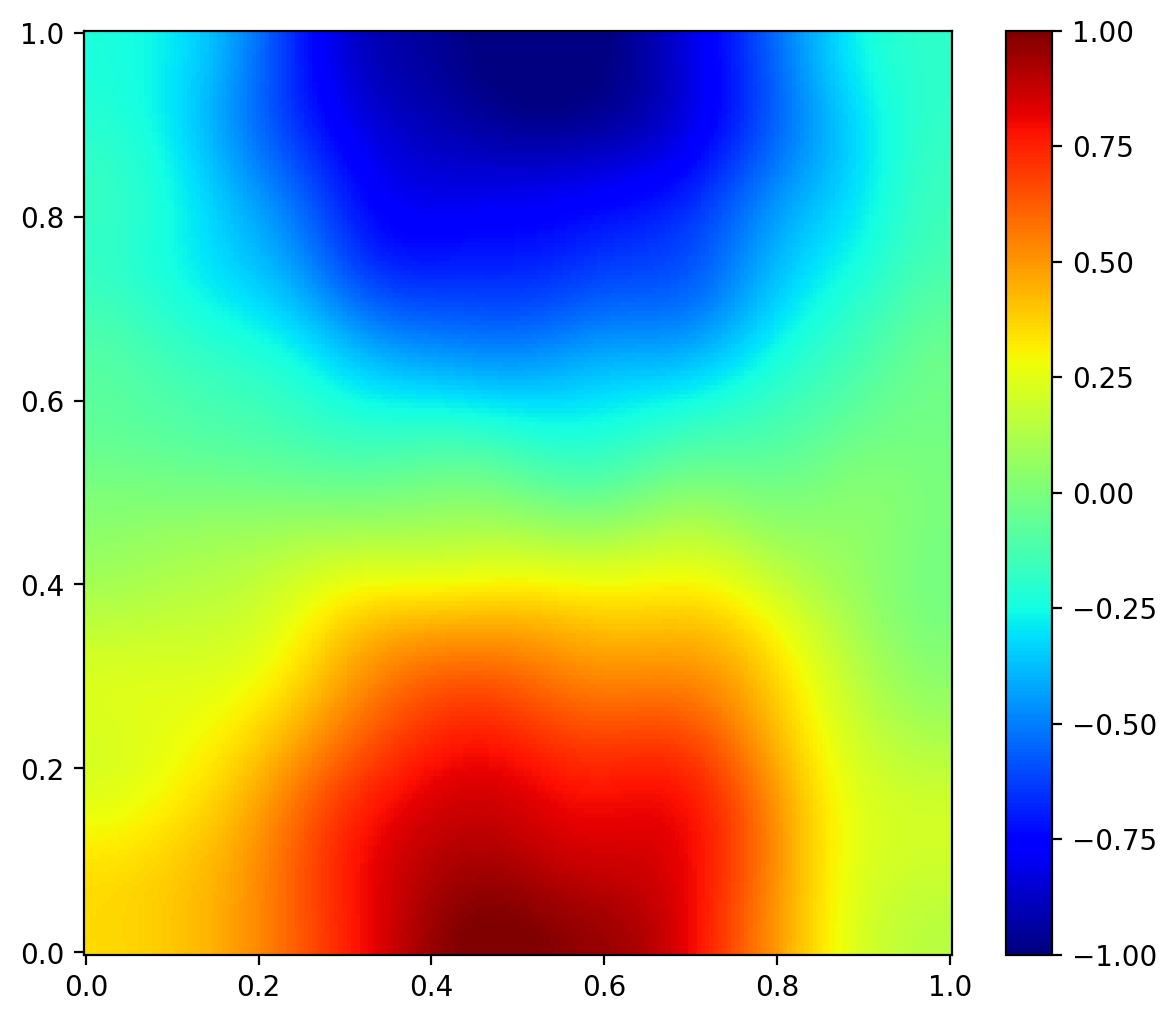}
            \put (8,87) {\footnotesize \bf DeepOnet-UKI-Adaptive}
        \end{overpic}
                \begin{overpic}[width=0.30\textwidth]{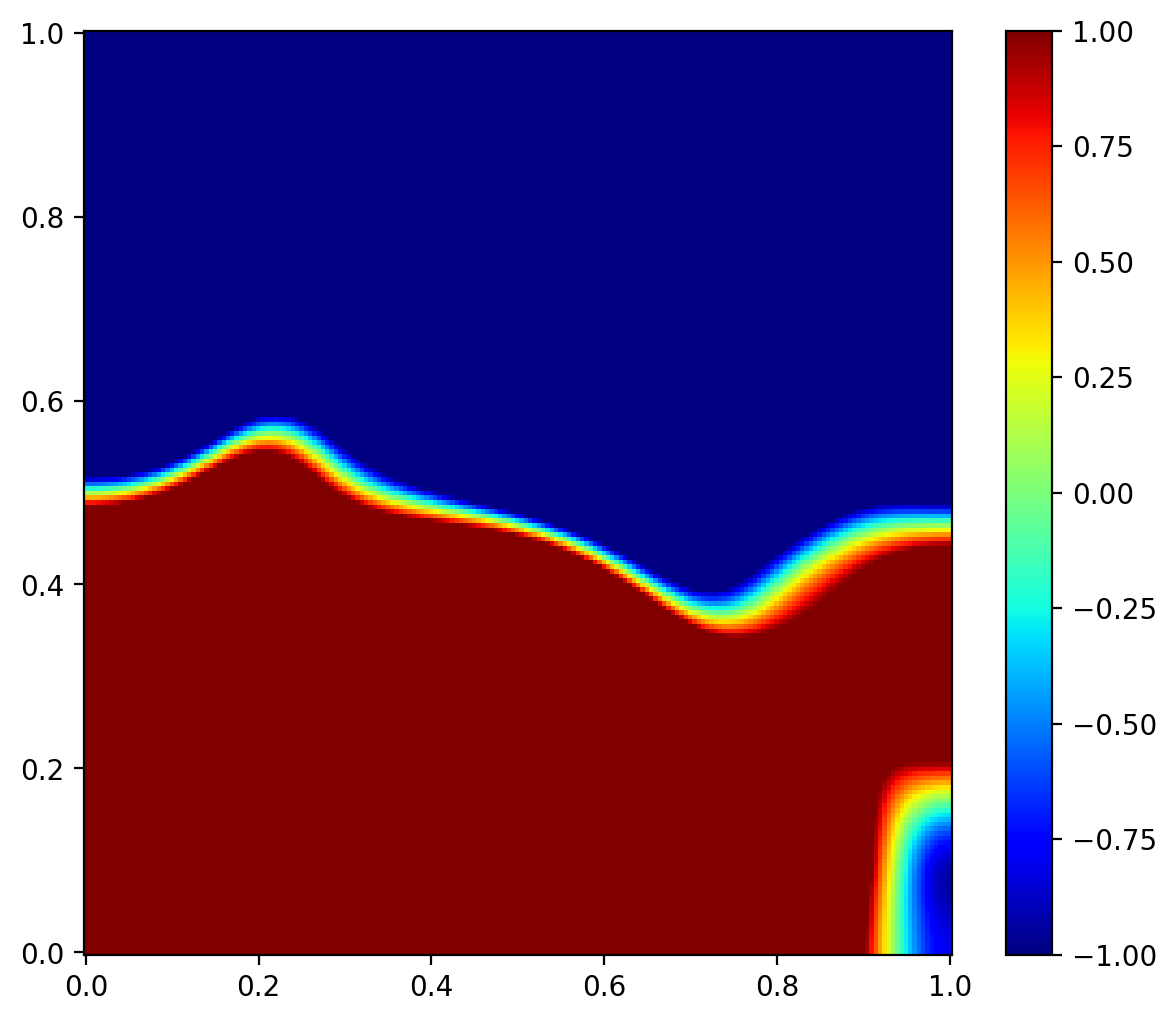}
            \put (10,87) {\footnotesize \bf DeepOnet-UKI-Direct}
        \end{overpic}
    \end{center}
    \begin{center}
        \begin{overpic}[width=0.3\textwidth]{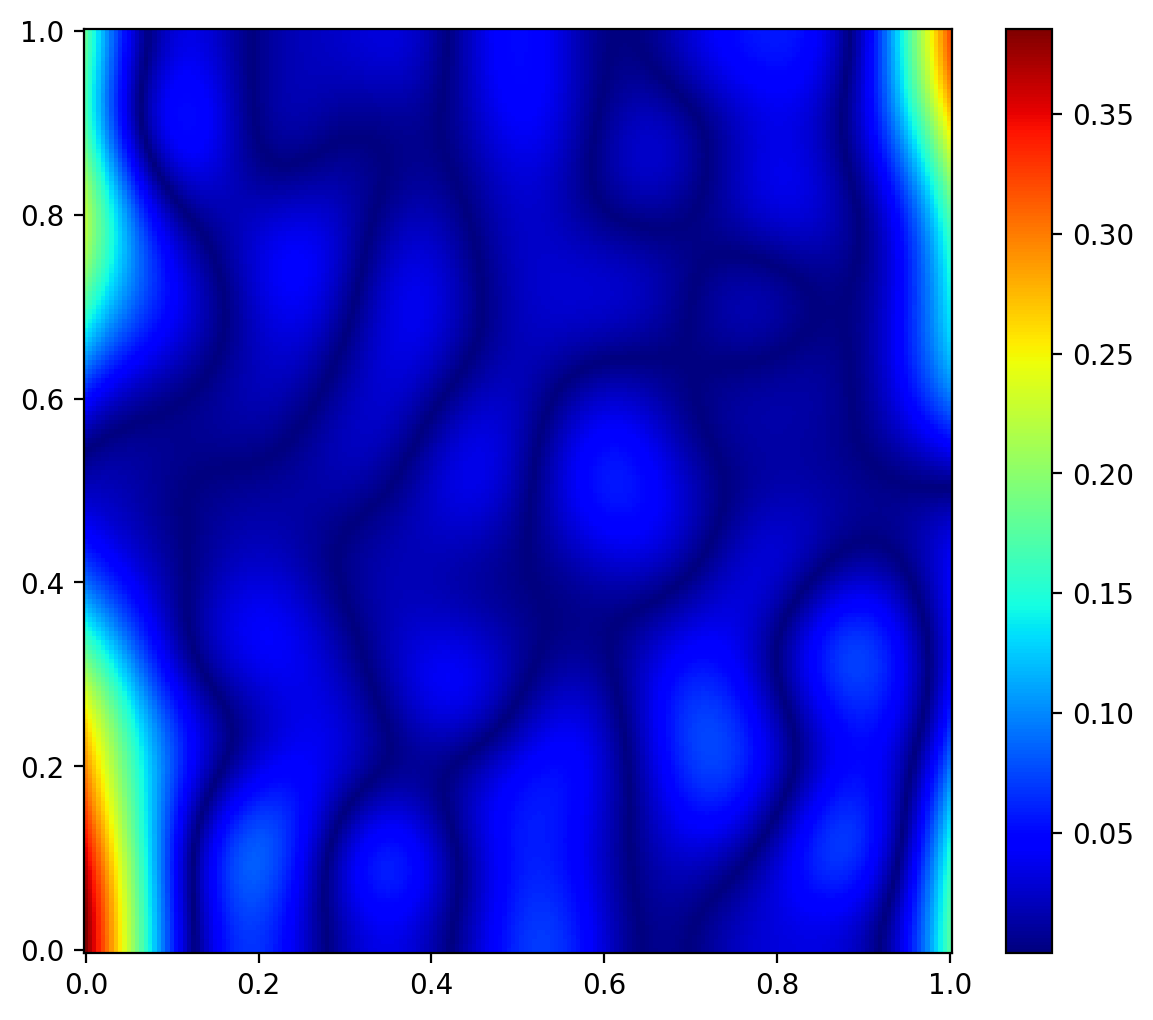}
        \end{overpic}
        \begin{overpic}[width=0.3\textwidth]{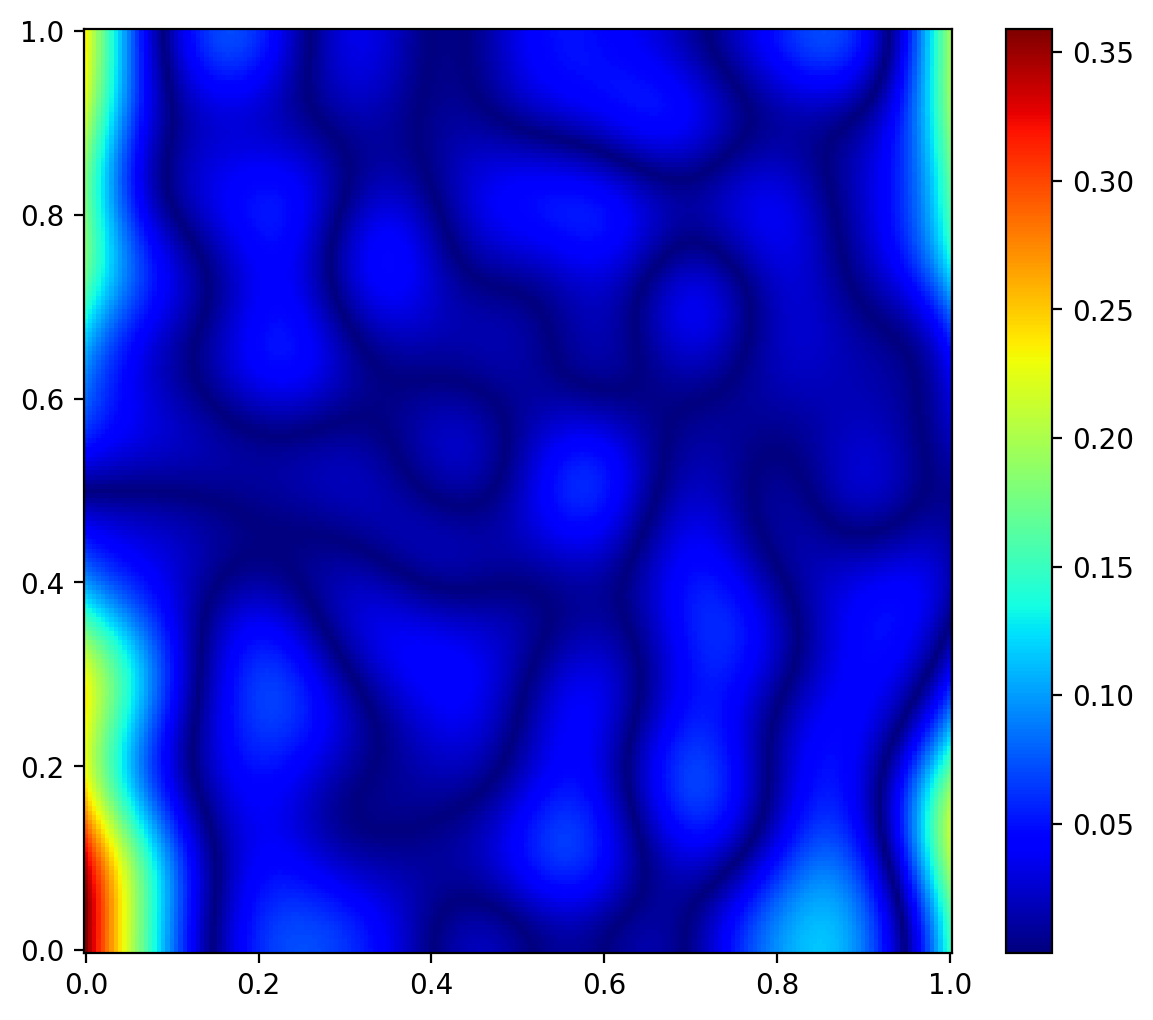}
        \end{overpic}
         \begin{overpic}[width=0.295\textwidth]{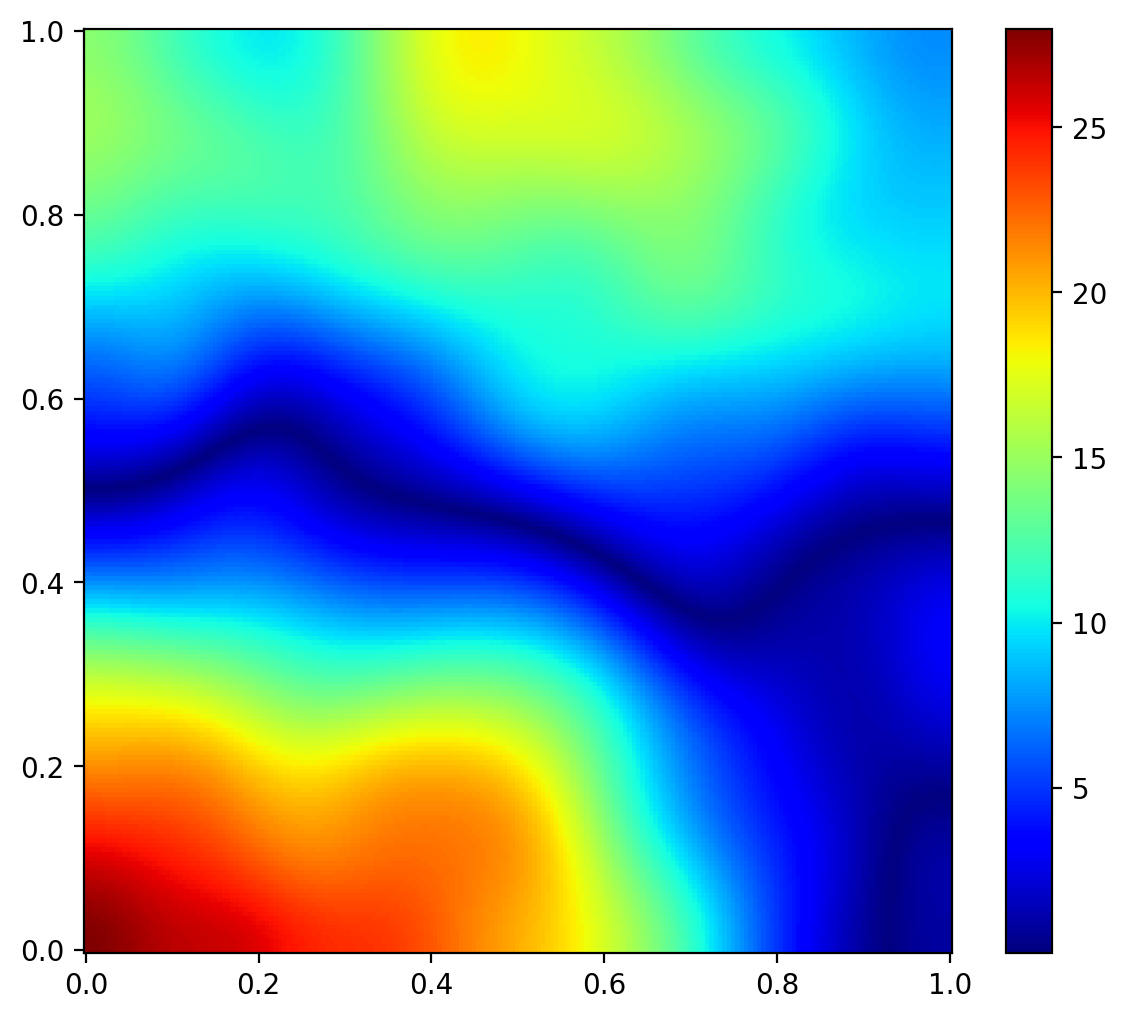}
        \end{overpic}
    \end{center}
    \caption{The inversion results for the heat source case. Above: the estimated source fields obtained by three different methods. Below: the absolute errors with respect to the true one.}
    \label{kappa_heat}
\end{figure}
\begin{figure}[htbp]
    \begin{center}
        \begin{overpic}[width=0.3\textwidth]{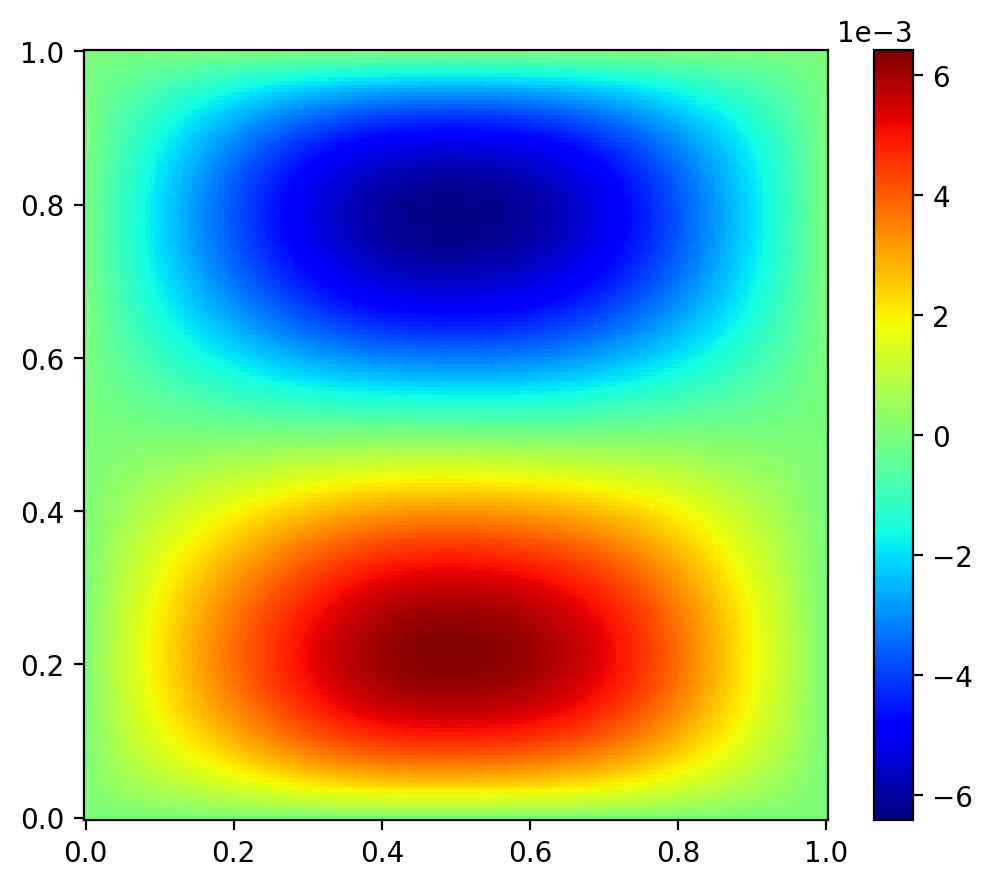}
        \put (30,88) {\footnotesize \bf FEM-UKI}
        \end{overpic}
        \begin{overpic}[width=0.3\textwidth]{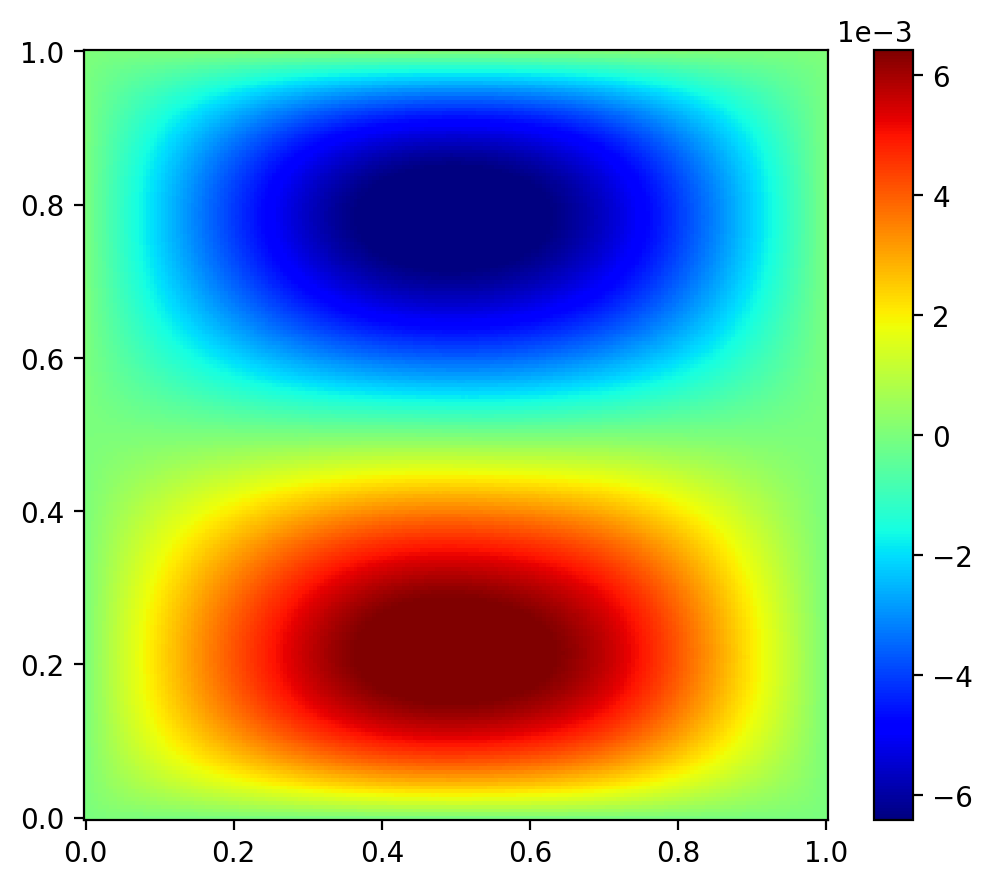}
            \put (8,88) {\footnotesize \bf DeepOnet-UKI-Adaptive}
        \end{overpic}
         \begin{overpic}[width=0.30\textwidth]{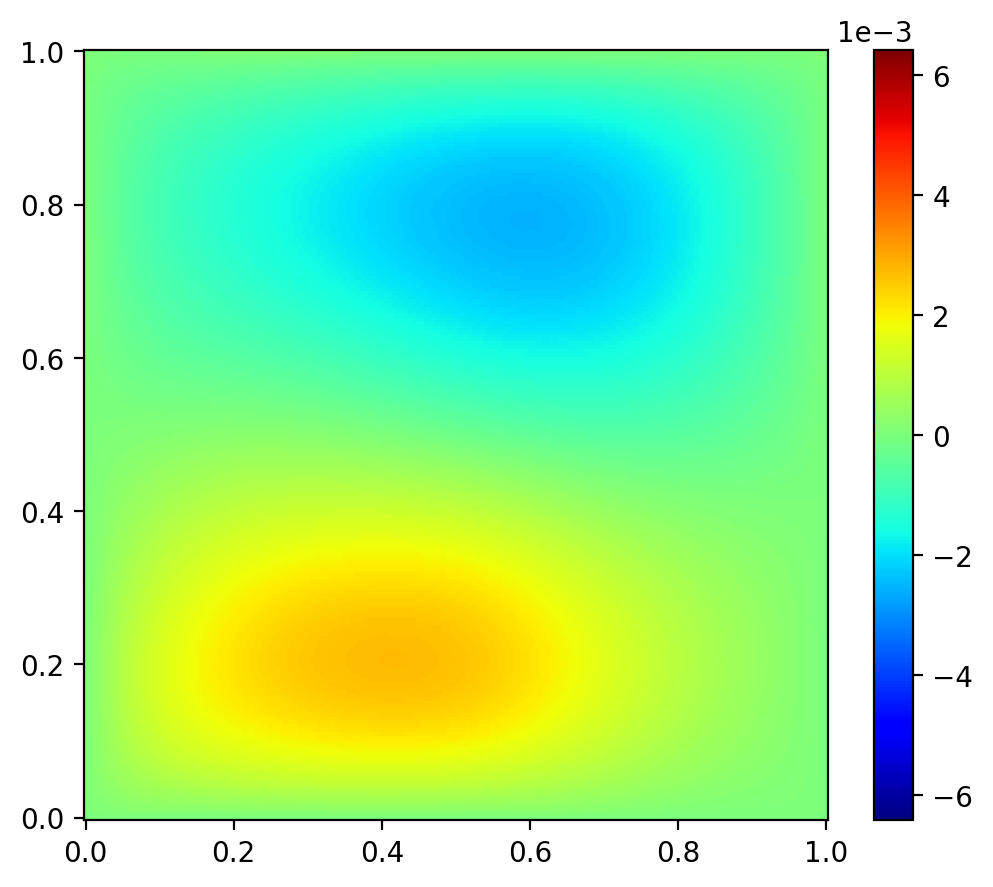}
            \put (10,88) {\footnotesize \bf DeepOnet-UKI-Direct}
        \end{overpic}
    \end{center}
    \begin{center}
        \begin{overpic}[width=0.30\textwidth]{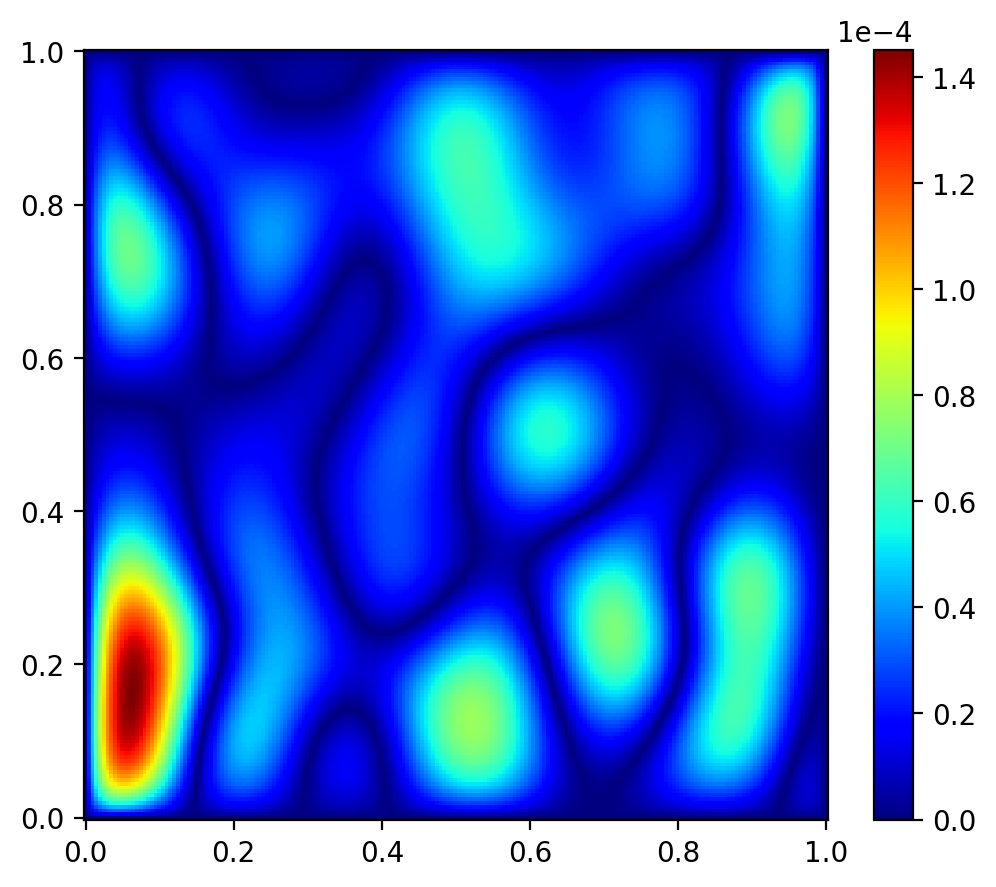}
        \end{overpic}
        \begin{overpic}[width=0.295\textwidth]{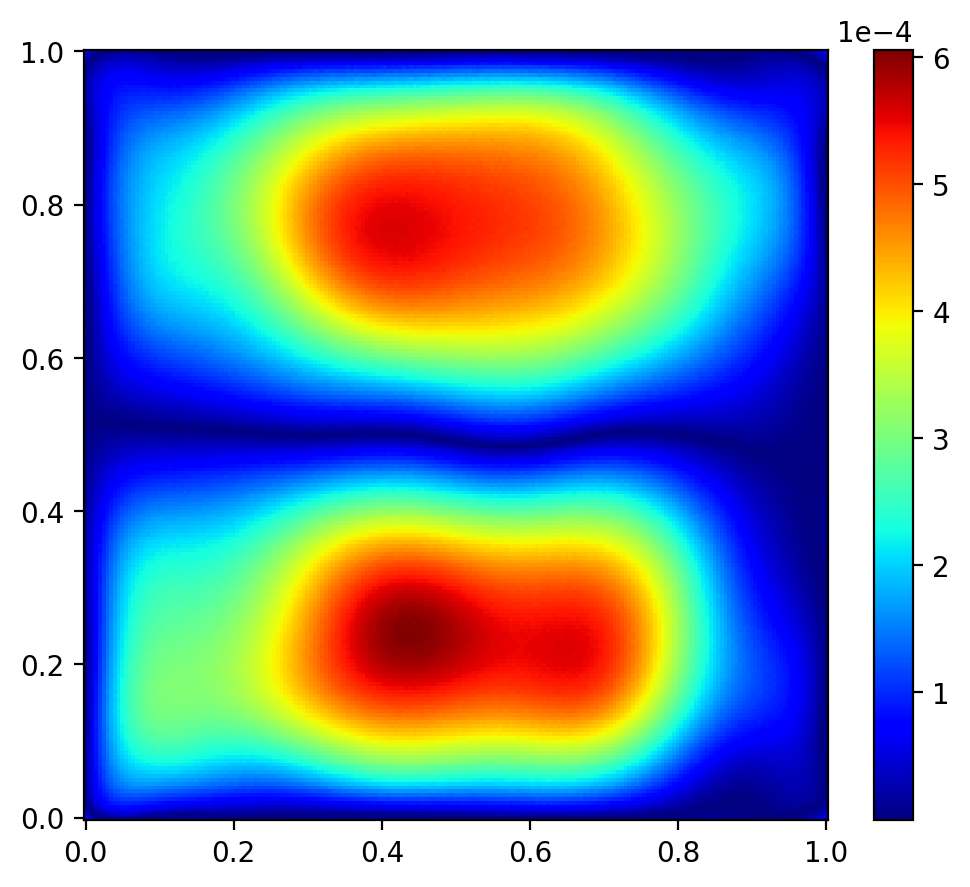}
        \end{overpic}
        \begin{overpic}[width=0.30\textwidth]{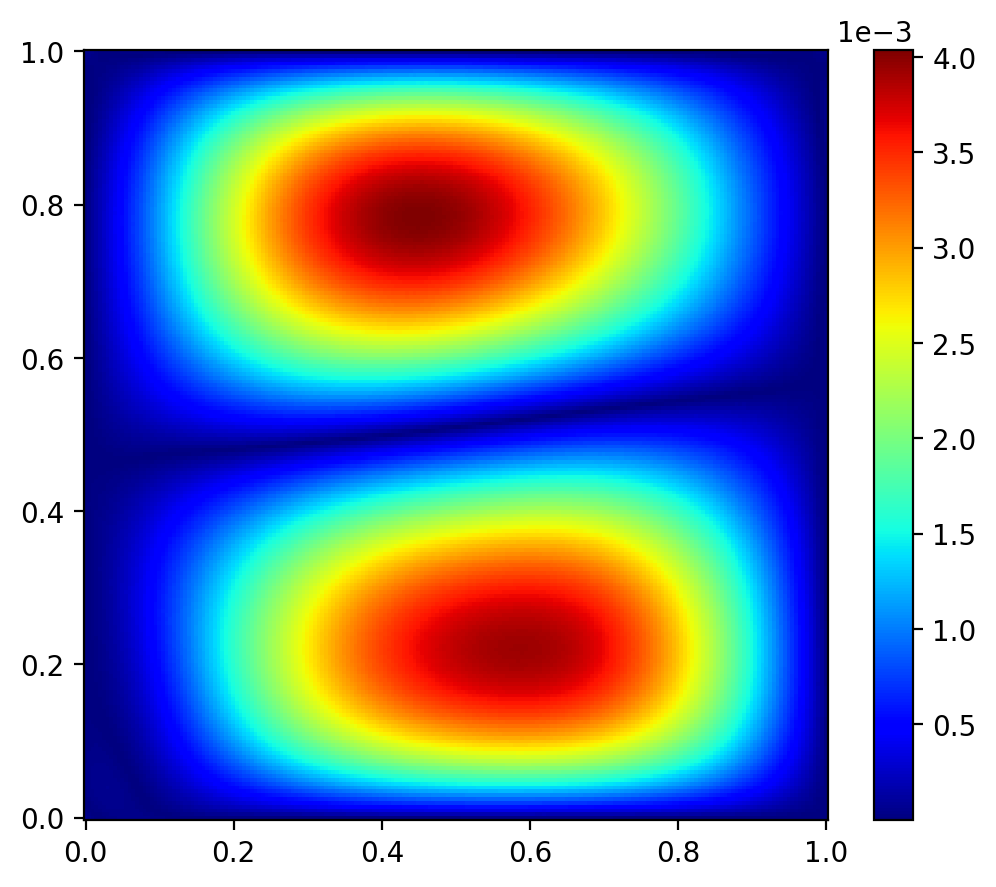}
        \end{overpic}
    \end{center}
    \caption{The inversion results for the heat source case. Above: the approximate states evaluated at the final estimated source fields obtained by different methods. Below: the absolute errors between the approximate state and the true state.}
    \label{forward_heat}
\end{figure}

\begin{figure}[htbp]
    \centering 
    \begin{overpic}[width = 0.325\textwidth]{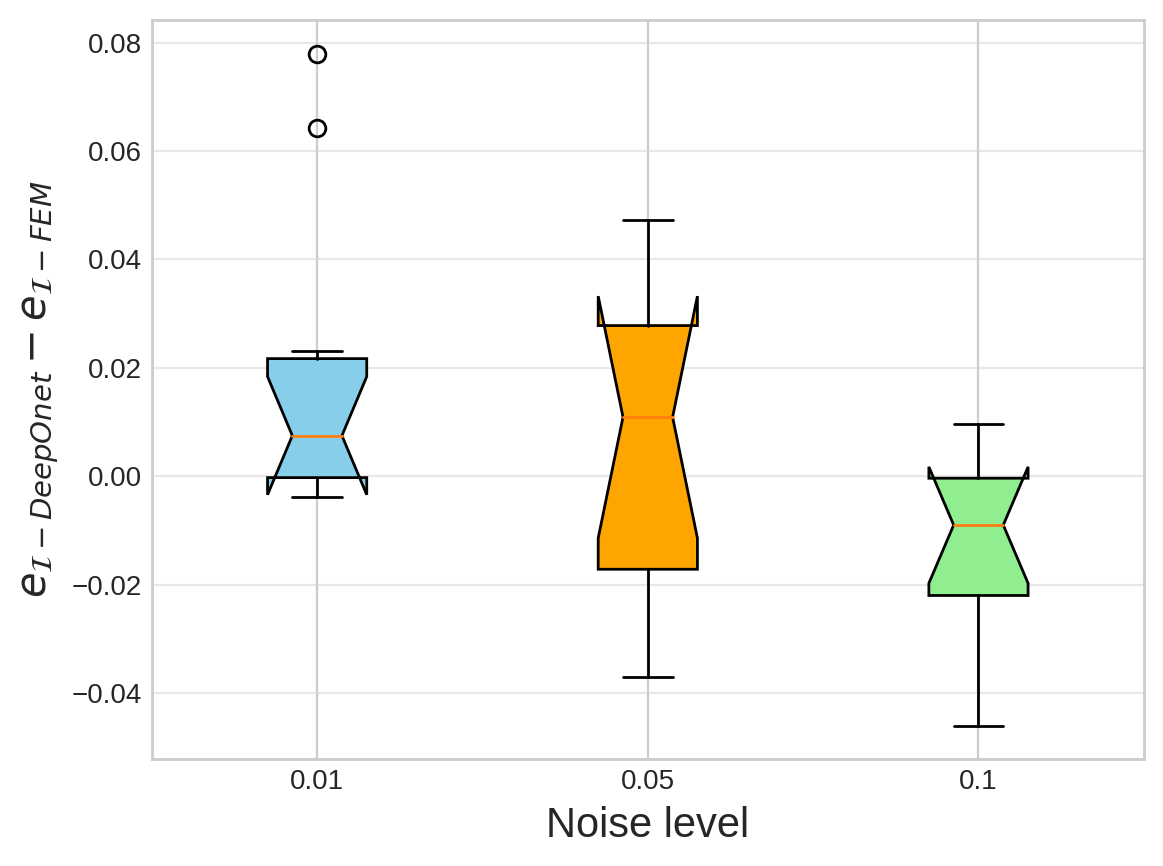}
    \end{overpic}
    \begin{overpic}[width = 0.32\textwidth]{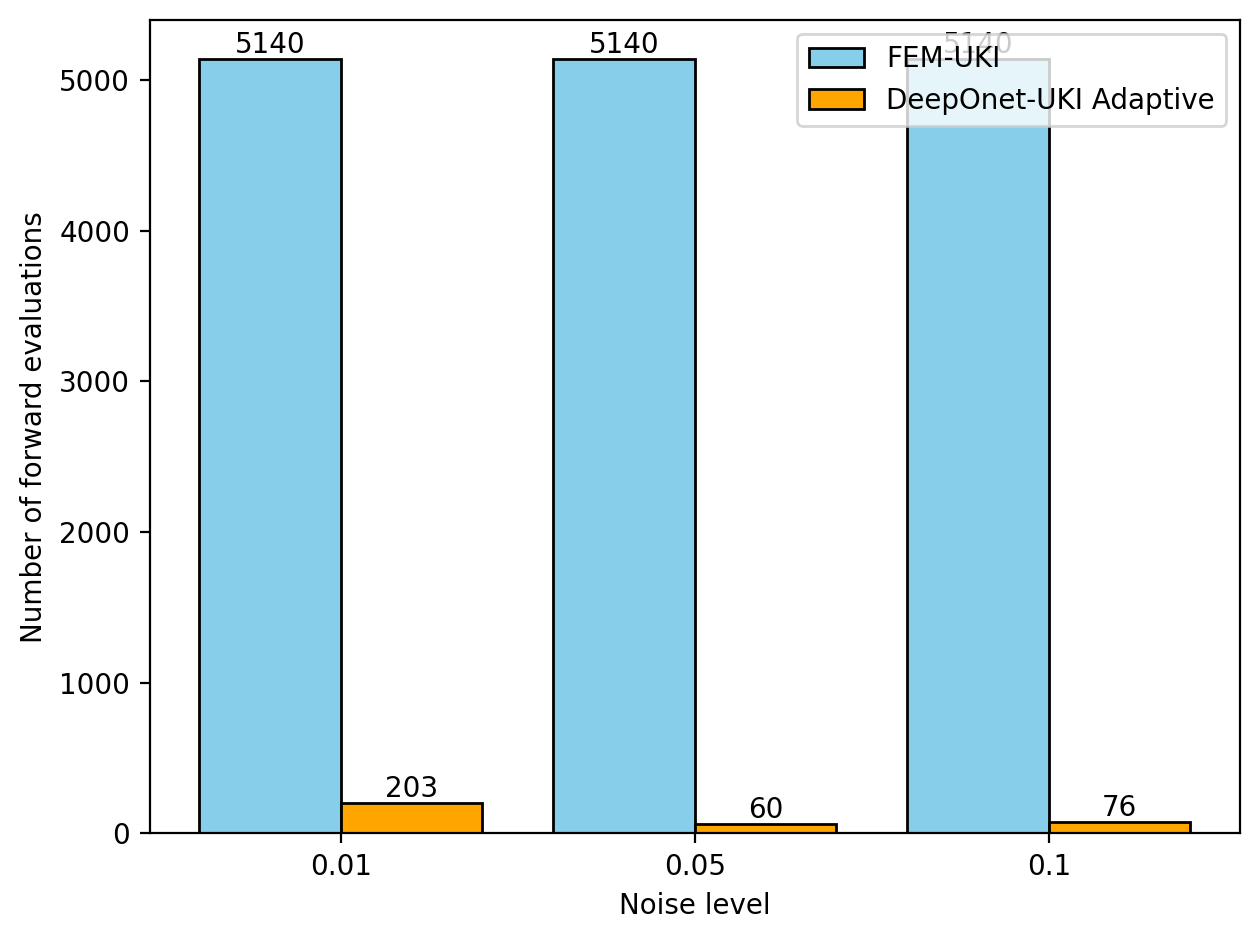}
    \end{overpic}
        \begin{overpic}[width = 0.34\textwidth]{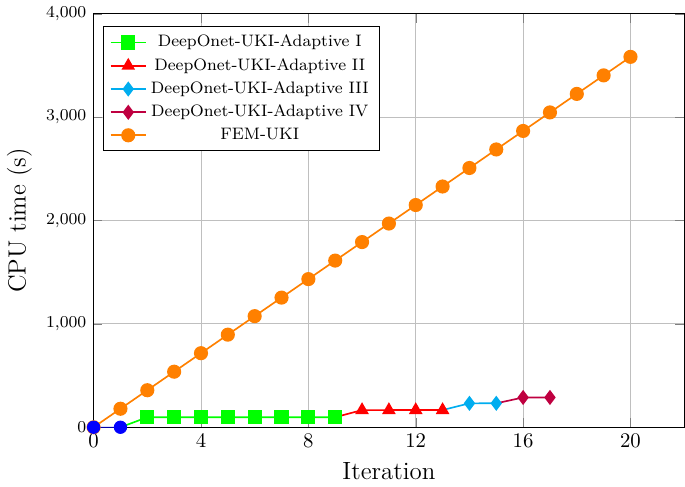}
    \end{overpic}
    \caption{The inversion results for the heat source case. Left: the error box plot of $e_{\mathcal{I}-DeepOnet} - e_{\mathcal{I}-FEM}$. Middle: the mean total number of forward evaluations. Right: the average computational time. }
    \label{err_comparison_heat}
\end{figure}


In order to provide additional evidence of the efficacy of our approach, we figure out to perform the experiment for UKI at varying noise levels. In addition, we will repeat the experiment ten times with different initial values for each noise level. Following that, we will compare the number of forward evaluations and the relative inversion errors for every approach. We plot the difference of the relative inversion errors $e_{\mathcal{I}-DeepOnet} - e_{\mathcal{I}-FEM}$ in the left display of Fig.\ref{err_comparison_heat}. It is clear that when noise levels increase, DeepOnet-UKI-Adaptive performs often better than FEM-UKI. This implies that our method can achieve higher accuracy than traditional solvers.   For the reasons mentioned below, the computational cost of the new method can also be extremely small.  First of all, it is much faster to fine-tune the original pre-trained surrogate model than it is to solve PDEs. Specially, we only need a maximum of 50  online forward evaluations in this example to retrain the network, which drastically lowers computational costs. We are able to clearly see that DeepOnet-UKI-Adaptive has a substantially smaller total number of forward evaluations than FEM-UKI, as the middle display of Fig.\ref{err_comparison_heat} illustrates. Secondly, the entire process is automatically stopped by applying the stop criterion.  We can start with the initial model that has been trained offline and fine-tune it multiple times for a given inversion task. As a result, our framework can achieve an accuracy level comparable to traditional FEM solvers, but at a significant reduction in computational cost for such problems. The CPU computation times  are plotted in  the right display of Fig.\ref{err_comparison_heat}.  It is clear that our adaptive framework greatly accelerates the inversion process, which is nearly 10 times faster than FEM-UKI with nearly same accuracy.

\subsection{Example 3: The reaction diffusion problem}
 \label{diffusion_reaaction}
 Here we consider the forward model as a parabolic PDE defined as 
 \begin{equation}
    \label{reaction_equation}
    \begin{aligned}
        u_t(\mb{x},t)-\kappa \Delta u(\mb{x},t)+\mathbf{v}(\mb{x}) \cdot \nabla u(\mb{x},t) & =0 & & \text { in } \Omega \times(0, 1), \\
        u(\cdot, 0) & =\mb{m} & & \text { in } \Omega, \\
        \kappa \nabla u \cdot \mathbf{n} & =0 & & \text { on } \partial \Omega \times(0, 1),
        \end{aligned}
 \end{equation}
 where $\kappa = \frac{1}{30}$ is the diffusion coefficient, and $\mb{v} := (\sin(\pi x)\cos(\pi y)$, $-\cos(\pi x)\sin(\pi y))^{T}$ is the velocity field. The forward problem is to find the concentration field $u(\mb{x}, t)$ defined by the initial field $\mb{m}(\mb{x})$. The inverse problem here is to find the true initial field $\mb{m}$ using noisy measurements of $u(\mb{x}, 1)$.  The forward problem is  discretized using FEM method on a $70\times 70$ grid, and  the resulting system of ordinary differential equations is integrated over time using a Crank-Nicolson scheme with uniform time step $\Delta t = 0.02$.

We only take into account the OOD data as Example 1 for the inverse problem.  In other words, we will attempt to inverse the first 128 KL modes using the ground truth $\mb{m}_{ref}(\mb{x})$, which is defined by \eqref{KL} with $\textcolor{black}{\zeta_{k}}\sim \mathcal{U}[-20,20],k=1,\cdots, 256$.  The exact solution and the corresponding  synthetic data  are displayed in Fig.\ref{observations_diffusion}.
 
 \begin{figure}[t]
\centering 
\begin{overpic}[width=0.39\textwidth]{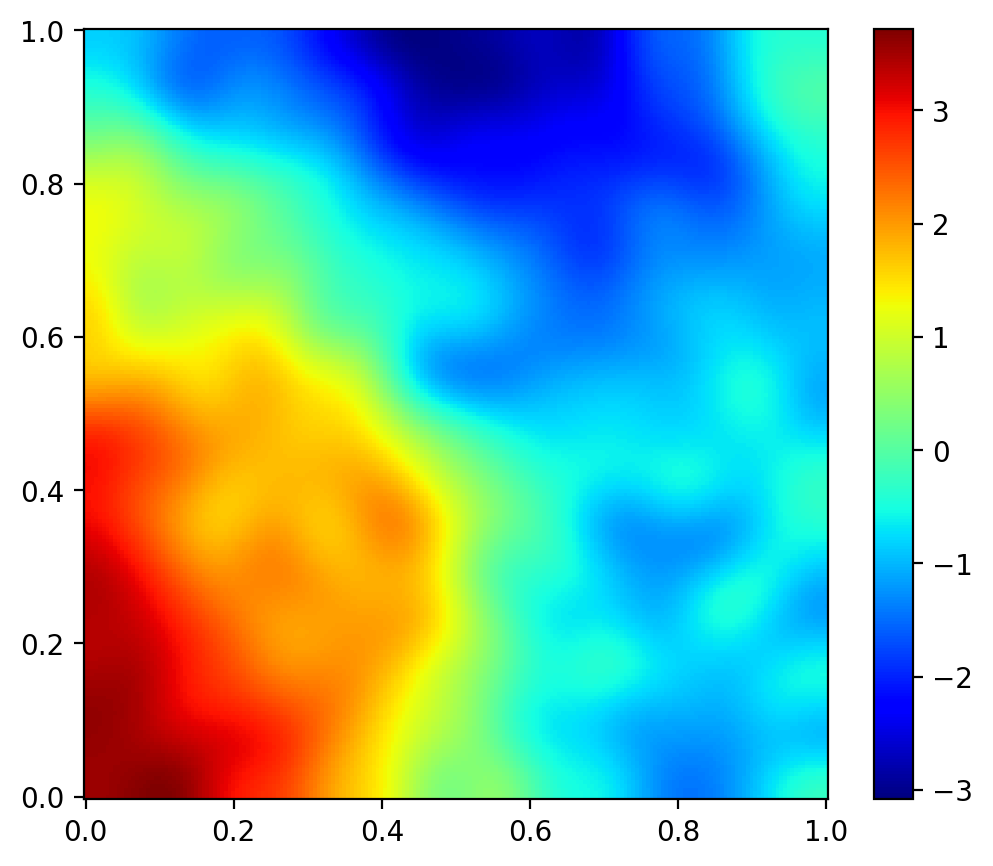}
\end{overpic}
\begin{overpic}[width=0.4\textwidth]{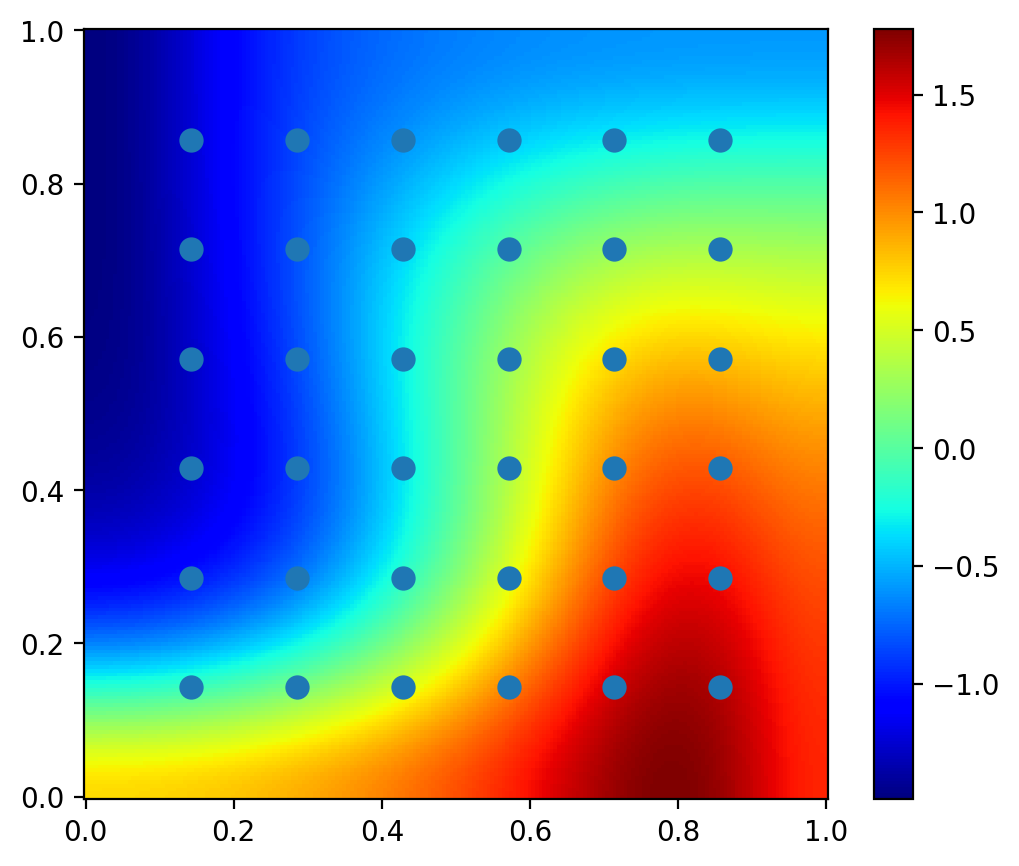}  
\end{overpic}
\caption{The reaction diffusion case. Left: the true initial field $m_{ref}(\mb{x})$. Right: the pressure field $u(\mb{x})$ and the corresponding $36$ equidistant observations with noise level 0.01.}
\label{observations_diffusion}  
 \end{figure}
 
The numerical results are shown in Fig.\ref{loss_diffusion}.  We can clearly see that refinement reduces the local model error significantly, and thus the inversion error will continue to decrease. This implies that our surrogate model can maintain its local accuracy during the inversion process by focusing on the region with the highest posterior probability. Finally, after six iterations of the initial model, the retraining was terminated using the stop criteria. Furthermore, as shown in the right display of Fig.\ref{loss_diffusion}, DeepOnet-UKI-Adaptive can achieve nearly the same order of accuracy as FEM-UKI.  This can be further confirmed by examining  at Figs.\ref{kappa_diffusion} and \ref{forward_diffusion}, which plot the final estimated initial fields and estimated states obtained by different methods.   In this case, the CPU time of evaluating the conventional FEM-UKI is more than 5149s. In contrast, for the  DeepOnet-UKI-Adaptive approach, the online CPU times is only 530s, meaning that the adaptive approach can provide accurate results, yet with less computational time.

\begin{figure}[t]
    \begin{center}
        \begin{overpic}[width=0.3\textwidth]{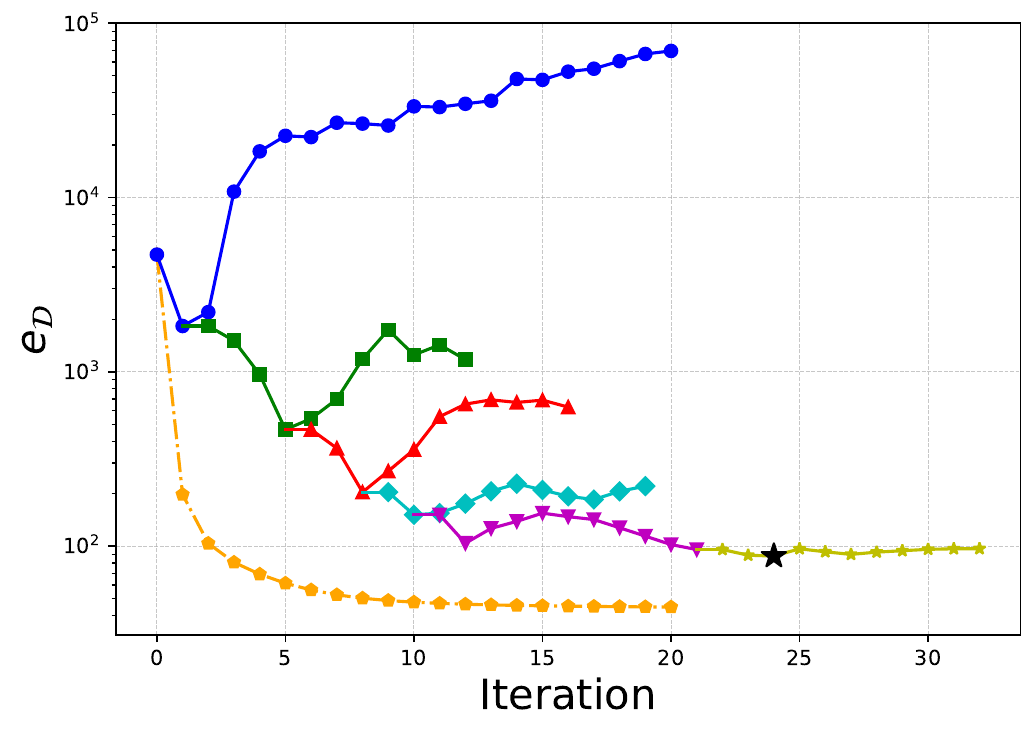}
         \put (35,72) {\scriptsize {\bf fitting error}}
        \end{overpic}
        \begin{overpic}[width=0.3\textwidth]{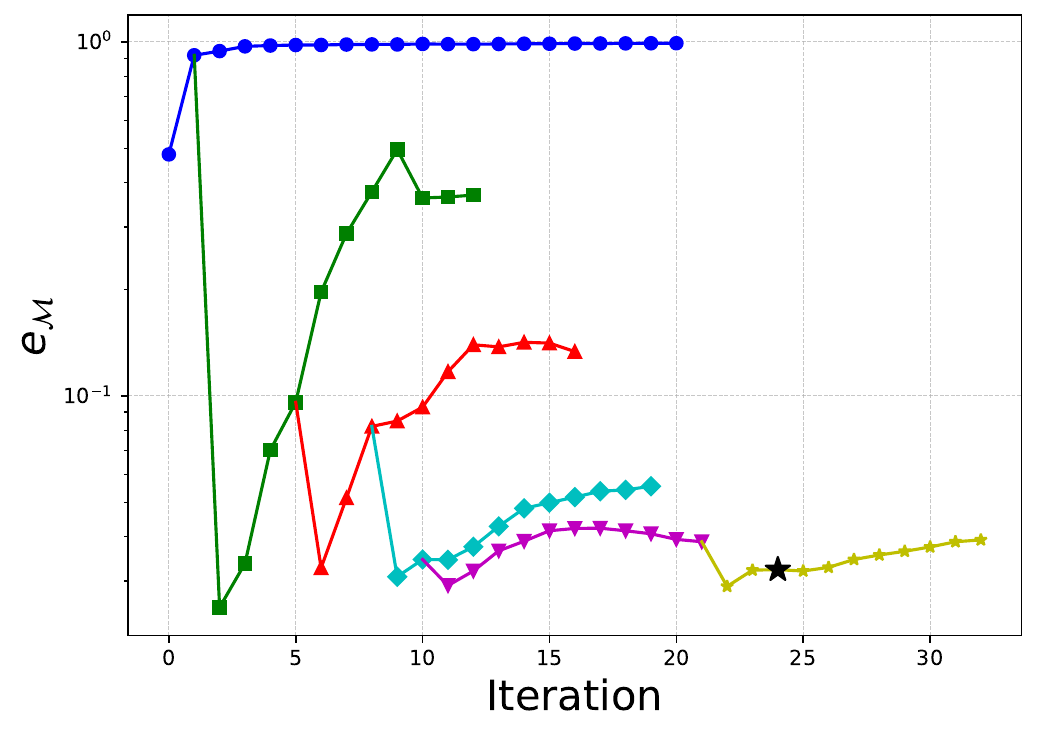}
         \put (38,72) {\scriptsize {\bf model error}}
        \end{overpic}
        \begin{overpic}[width=0.29\textwidth]{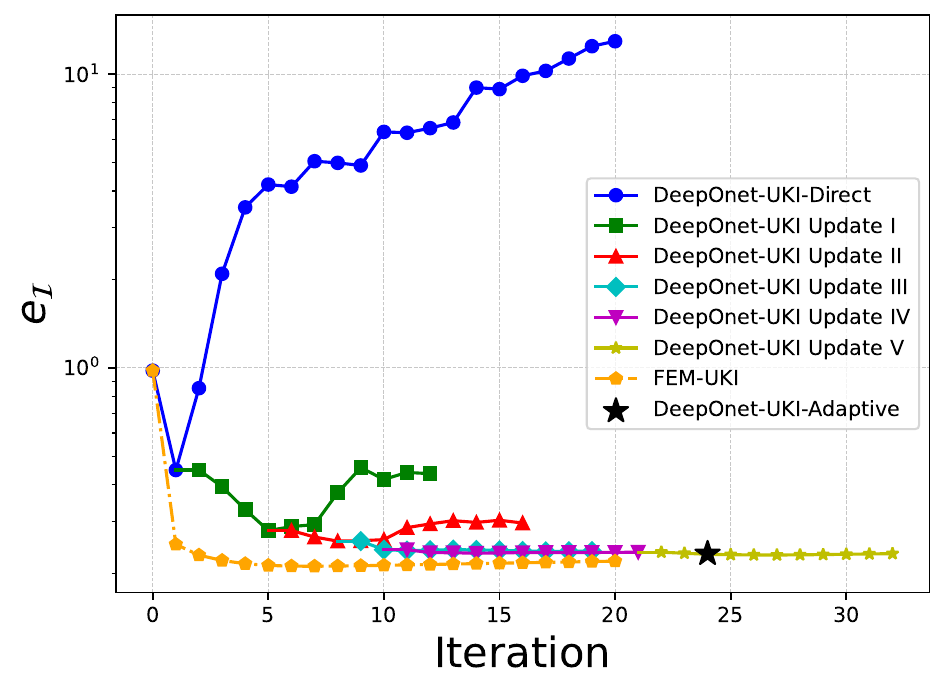}
                \put (22,74) {\scriptsize {\bf relative inversion error}}
        \end{overpic}
    \end{center}
    \caption{The inversion results obtained by three methods for the reaction diffusion case. Left: the data fitting error. Middle: the model error. Right: the relative inversion errors. }
    \label{loss_diffusion}
\end{figure}

\begin{figure}[t]
    \begin{center}
        \begin{overpic}[width=0.3\textwidth]{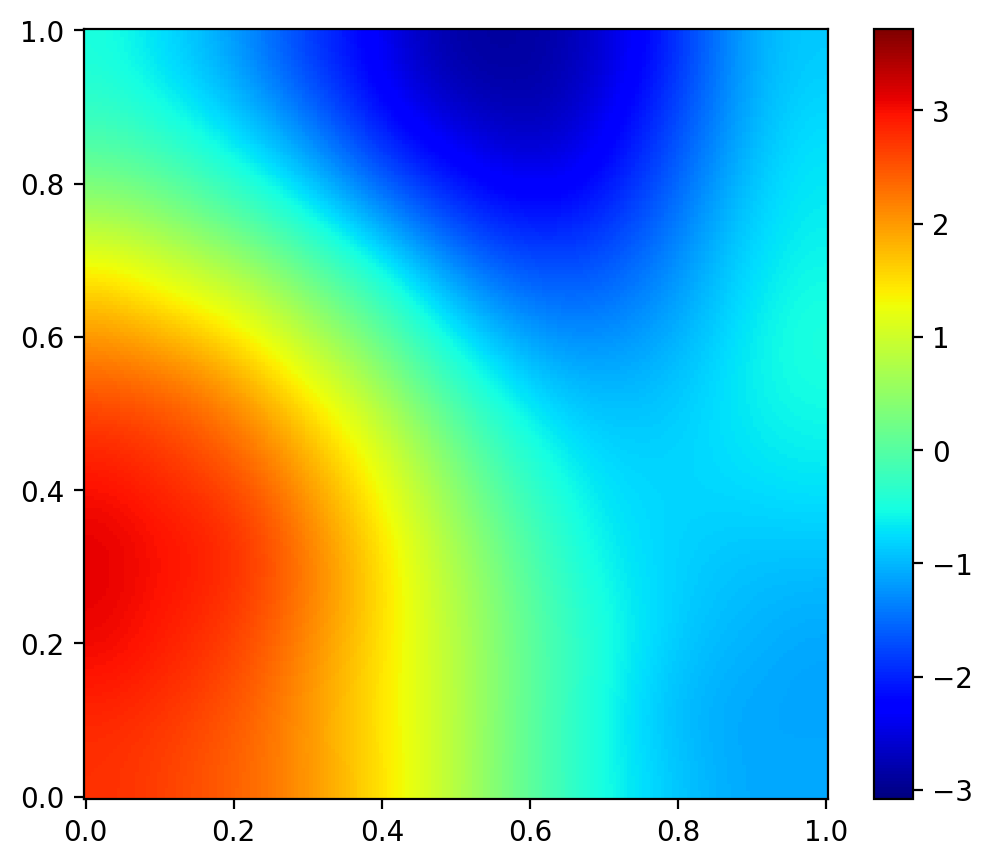}
        \put (30,87) {\footnotesize \bf FEM-UKI}
        \end{overpic}
        \begin{overpic}[width=0.3\textwidth]{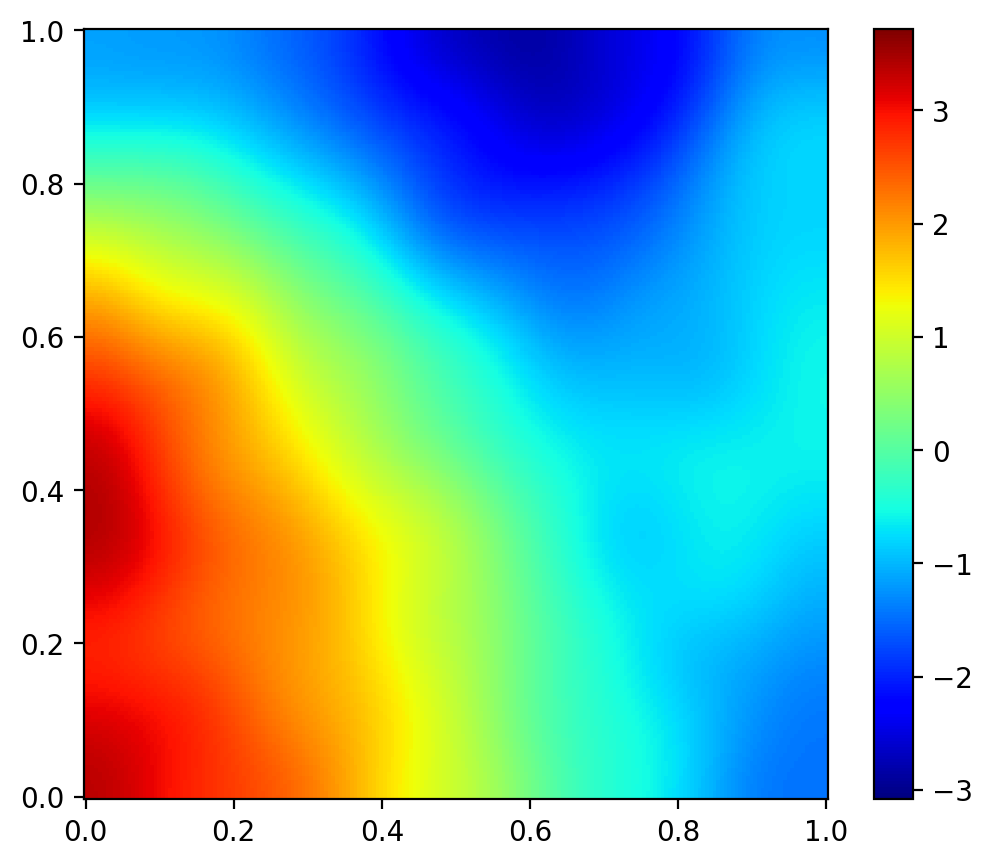}
            \put (5,87) {\footnotesize \bf DeepOnet-UKI-Adaptive}
        \end{overpic}
      \begin{overpic}[width=0.30\textwidth]{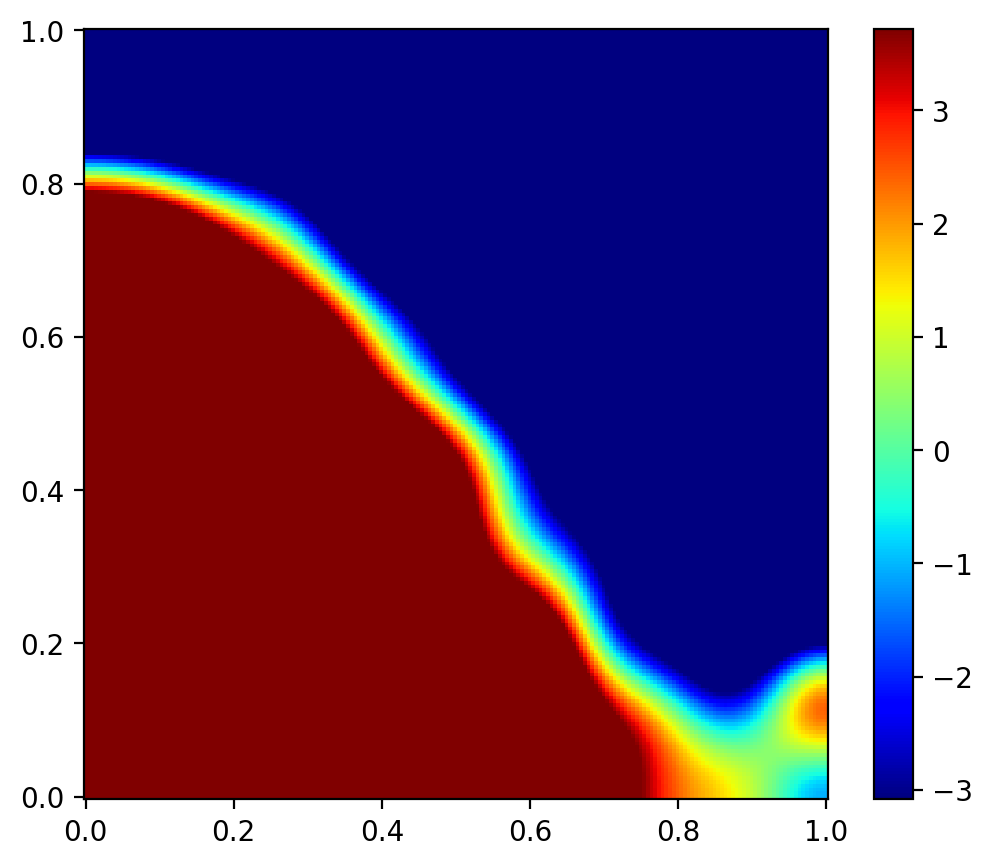}
            \put (10,87) {\footnotesize \bf DeepOnet-UKI-Direct}
        \end{overpic}
    \end{center}
    \begin{center}
        \begin{overpic}[width=0.3\textwidth]{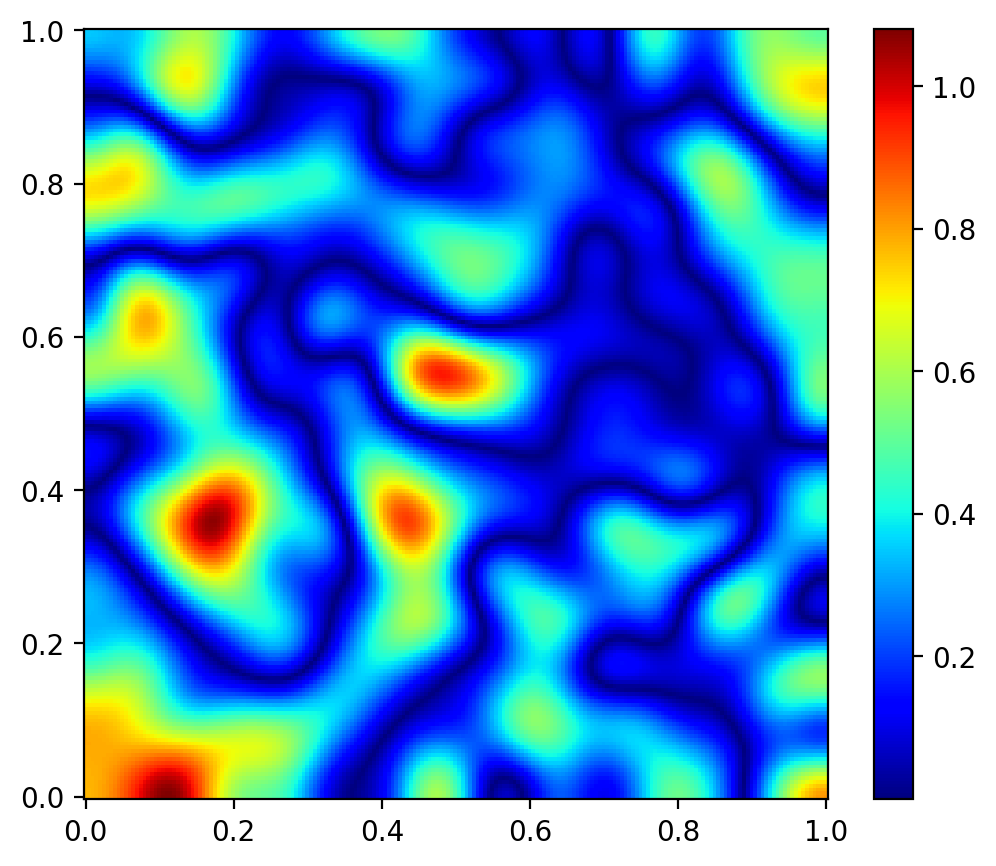}
        \end{overpic}
        \begin{overpic}[width=0.304\textwidth]{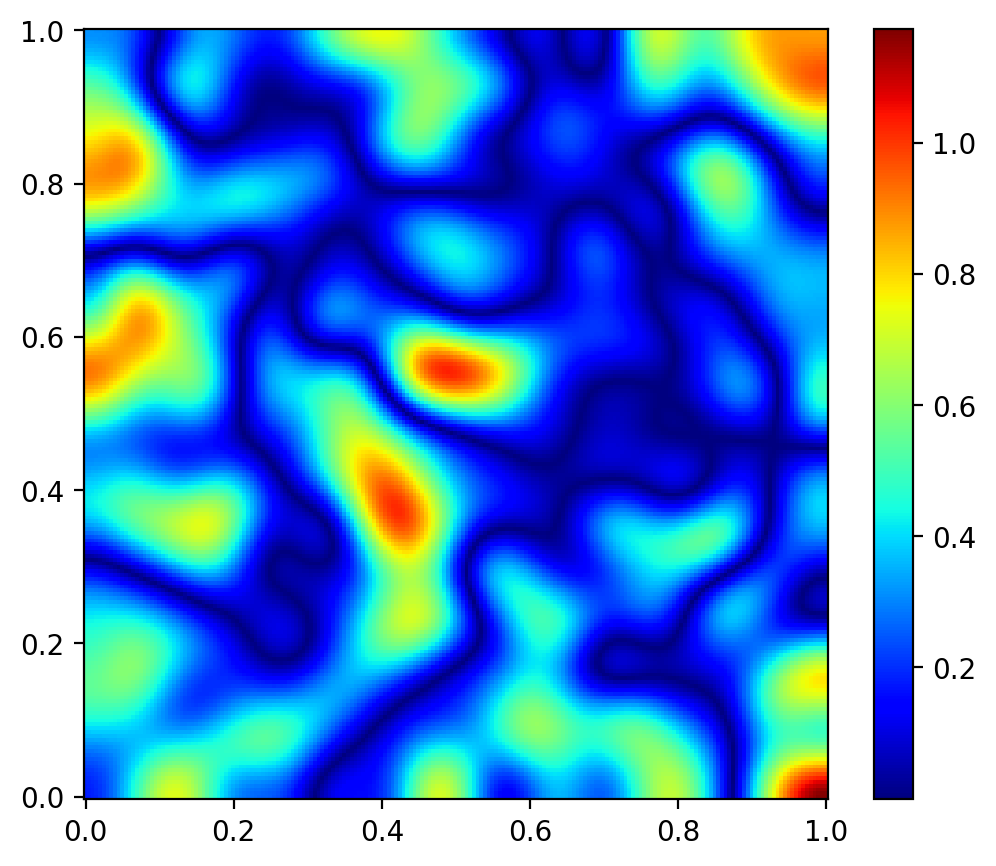}
        \end{overpic}
        \begin{overpic}[width=0.3\textwidth]{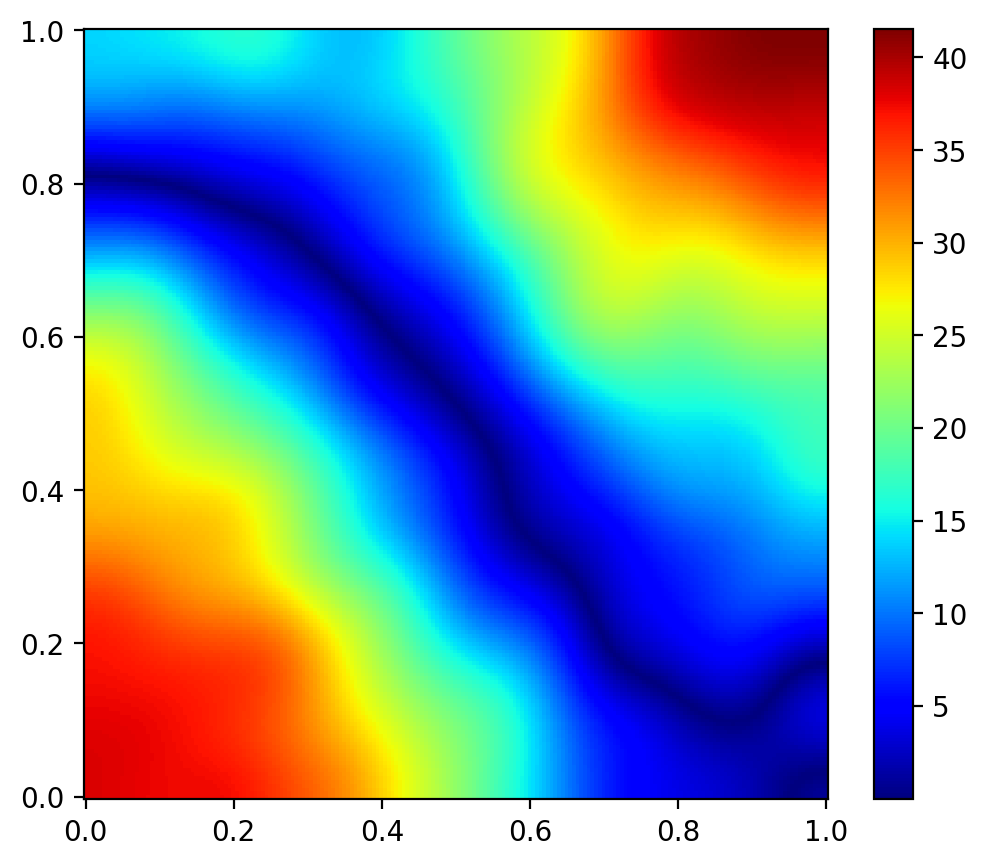}
        \end{overpic}
    \end{center}
    \caption{The inversion results for reaction diffusion case. Above: the estimated initial fields. Below: the absolute errors with respect to the true ones.}
    \label{kappa_diffusion}
\end{figure}
\begin{figure}[htbp]
    \begin{center}
        \begin{overpic}[width=0.3\textwidth]{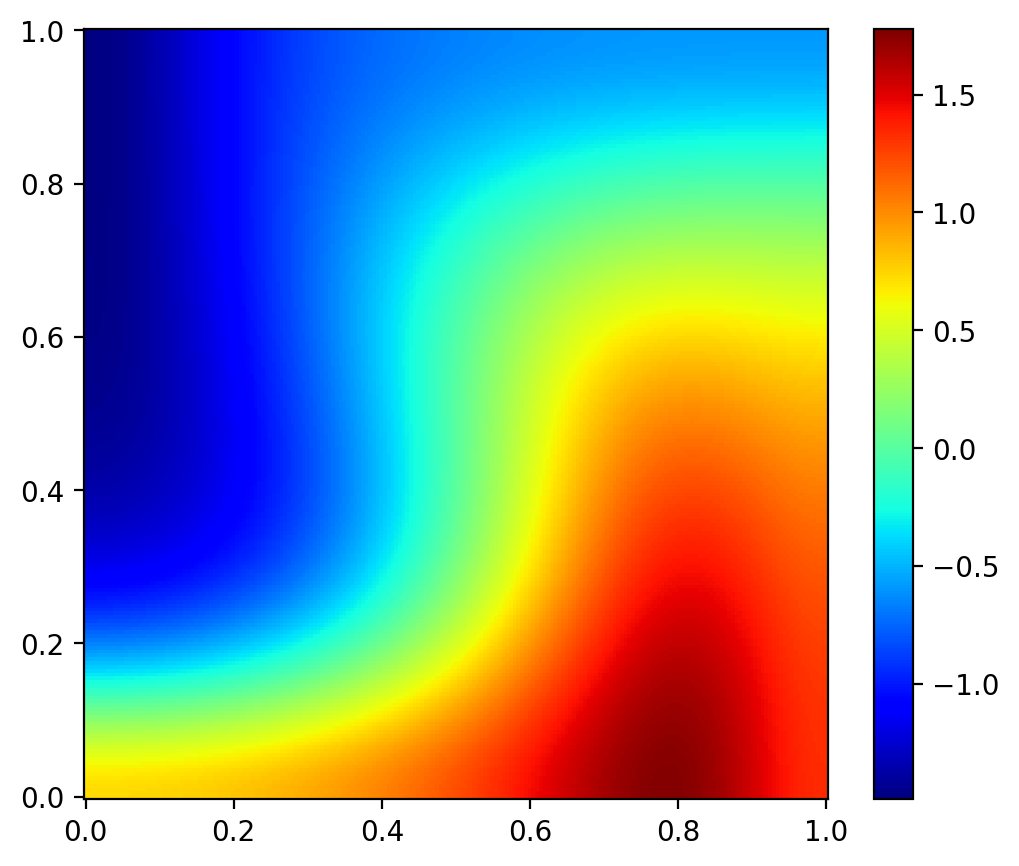}
            \put (30,85) {\footnotesize \bf FEM-UKI}
        \end{overpic}
        \begin{overpic}[width=0.3\textwidth]{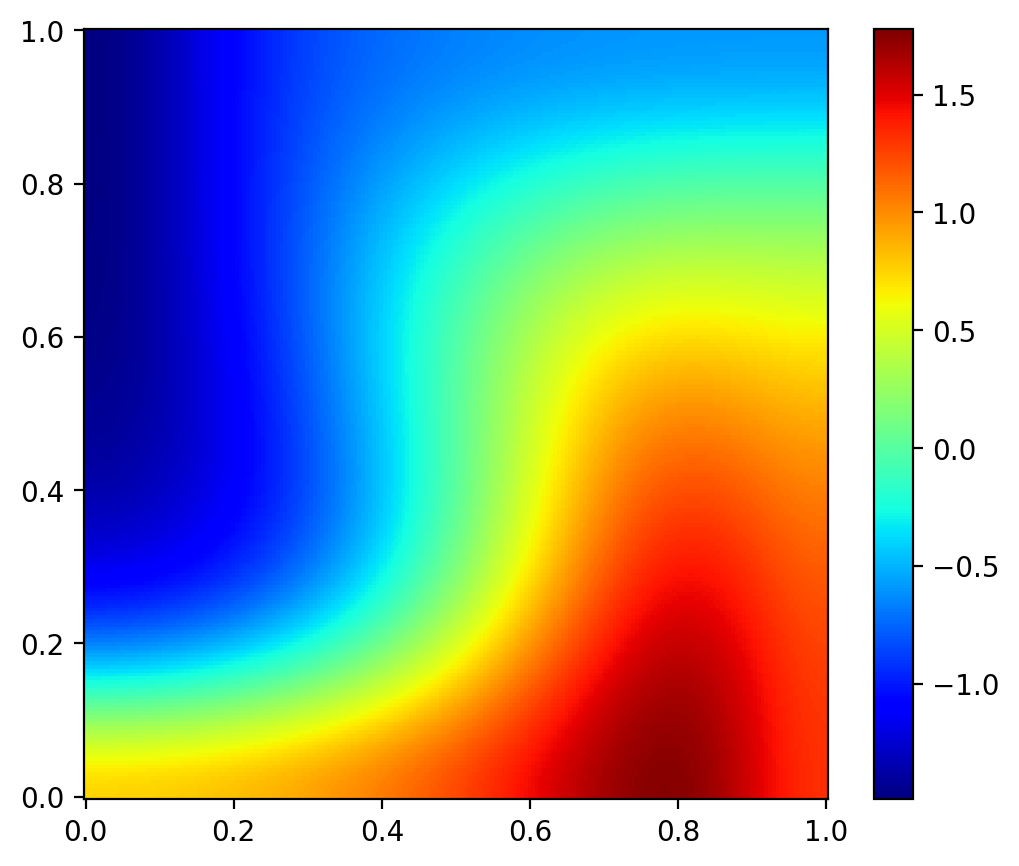}
            \put (5,85) {\footnotesize \bf DeepOnet-UKI-Adaptive}
        \end{overpic}
                \begin{overpic}[width=0.3\textwidth]{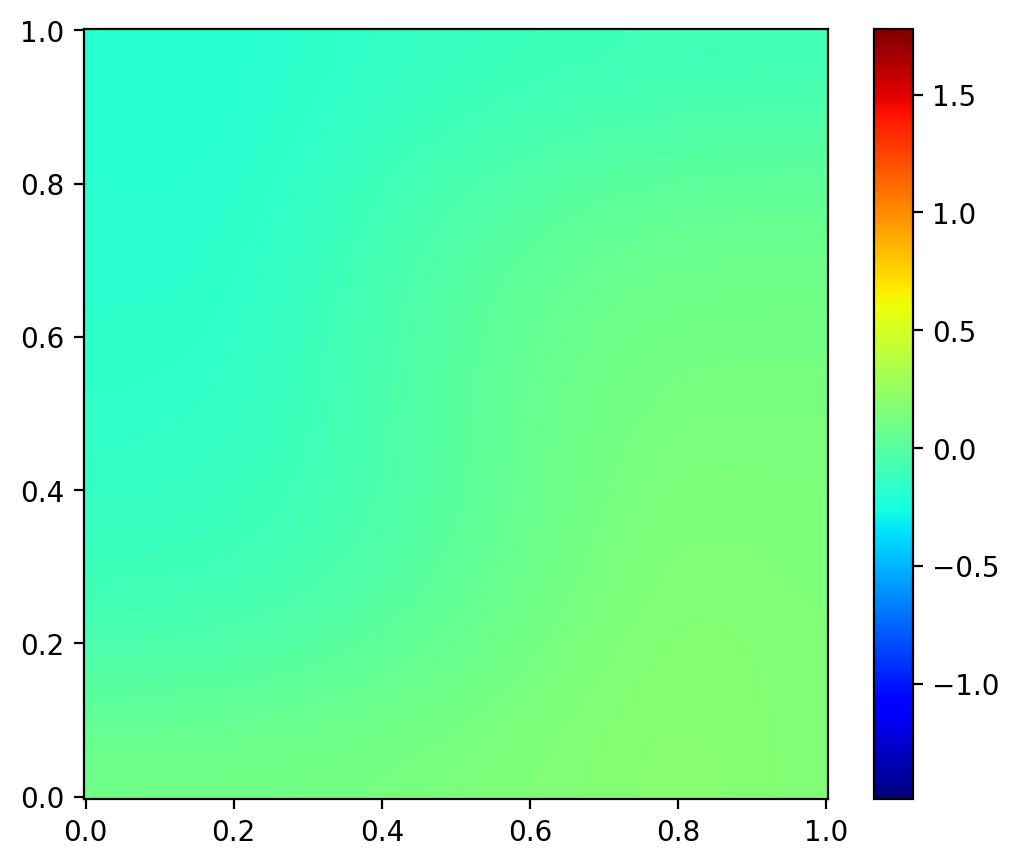}
            \put (10,85) {\footnotesize \bf DeepOnet-UKI-Direct}
        \end{overpic}
    \end{center}
    \begin{center}
        \begin{overpic}[width=0.3\textwidth]{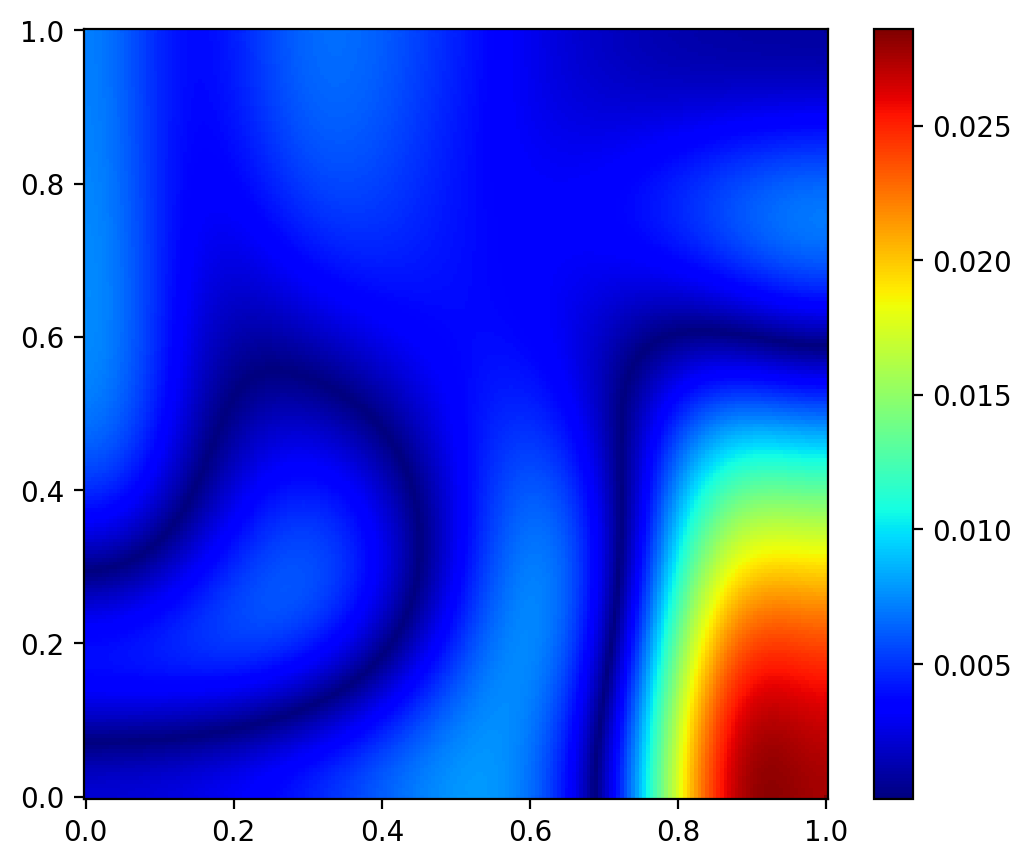}
        \end{overpic}
        \begin{overpic}[width=0.3\textwidth]{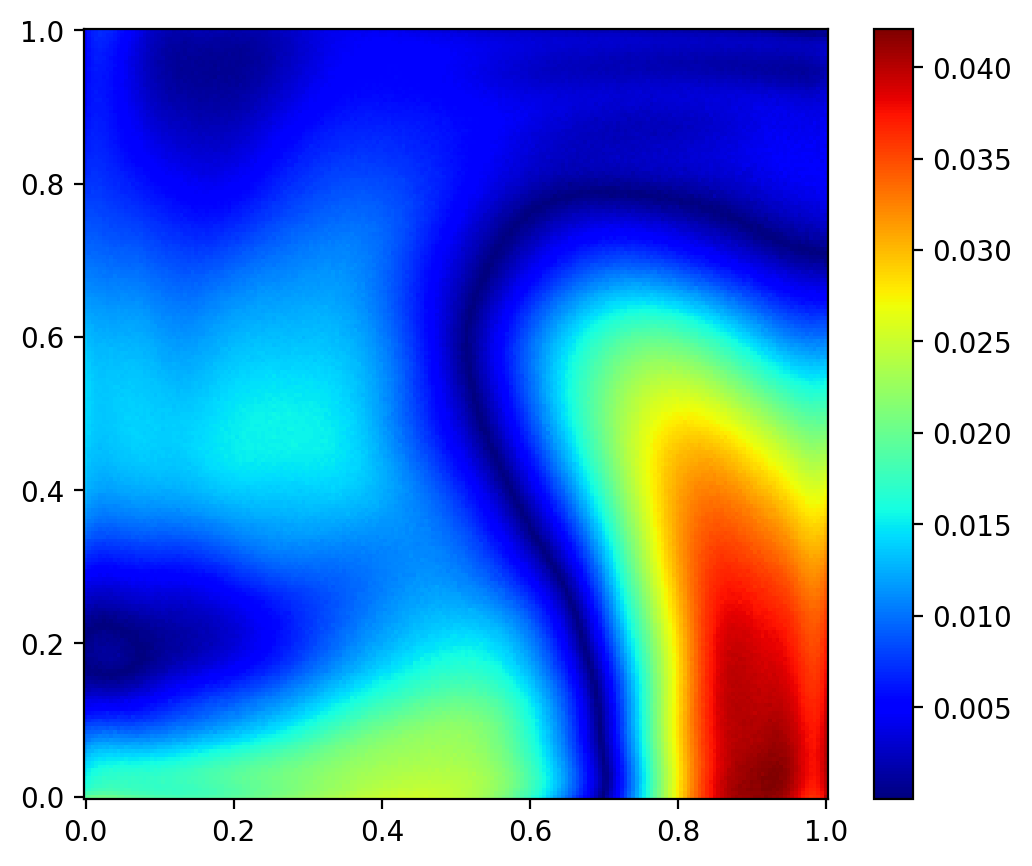}
        \end{overpic}
              \begin{overpic}[width=0.29\textwidth]{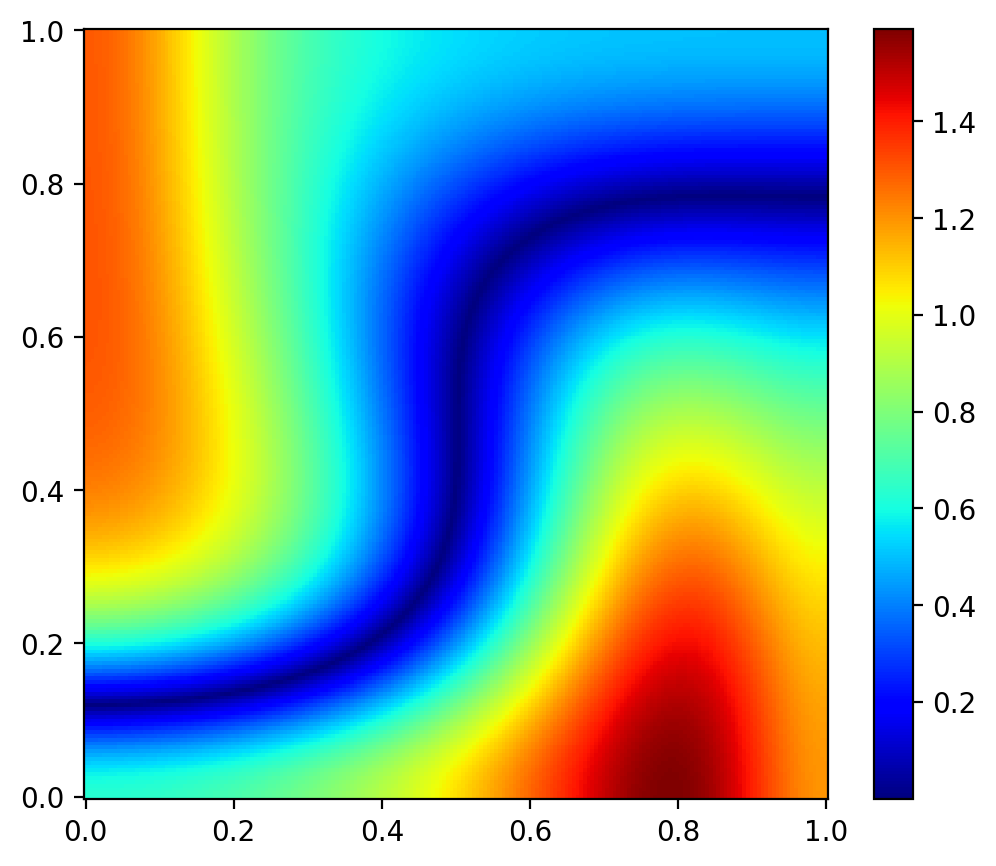}
        \end{overpic}
    \end{center}
    \caption{The inversion results for the reaction diffusion case. Above: the approximate states evaluated at the final estimated initial fields. Below: the absolute error between the approximate state and the true state.}
    \label{forward_diffusion}
\end{figure}

\begin{figure}[htbp]
    \centering 
    \begin{overpic}[width = 0.4\textwidth]{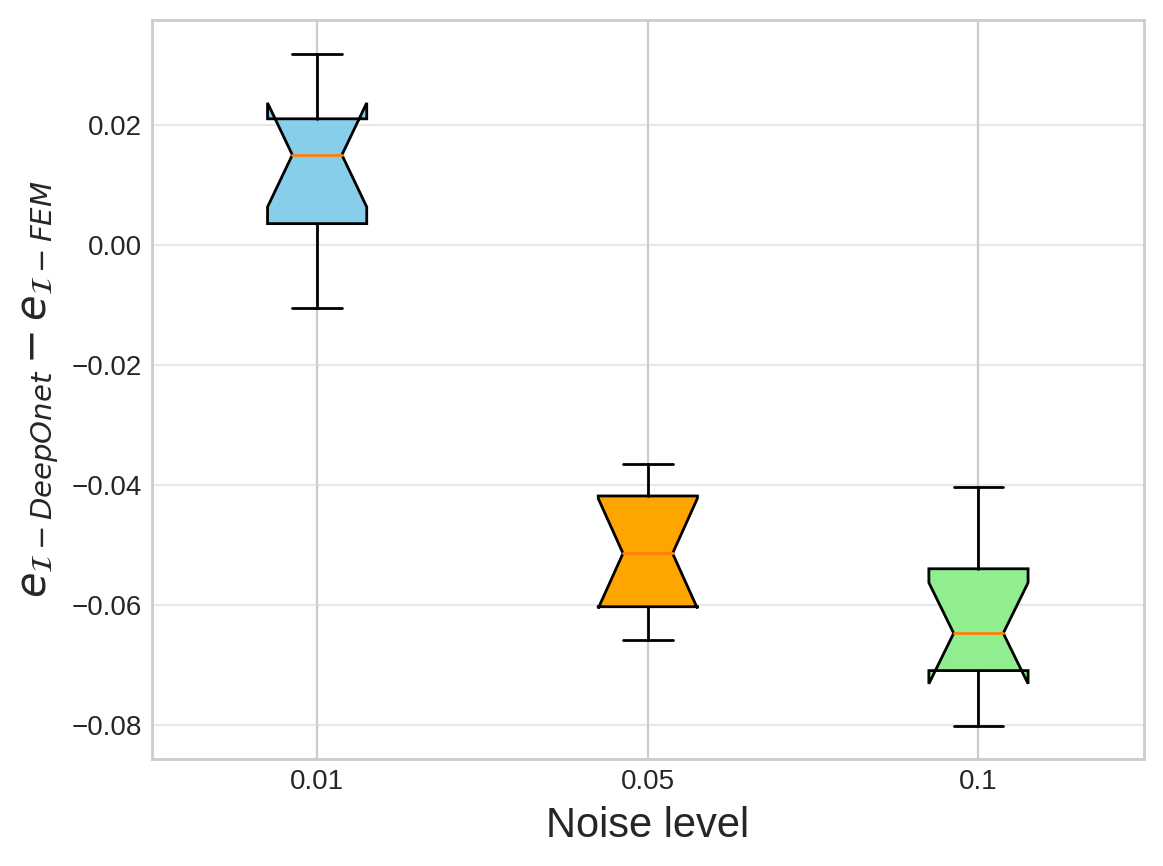}
    \end{overpic}
    \begin{overpic}[width = 0.41\textwidth]{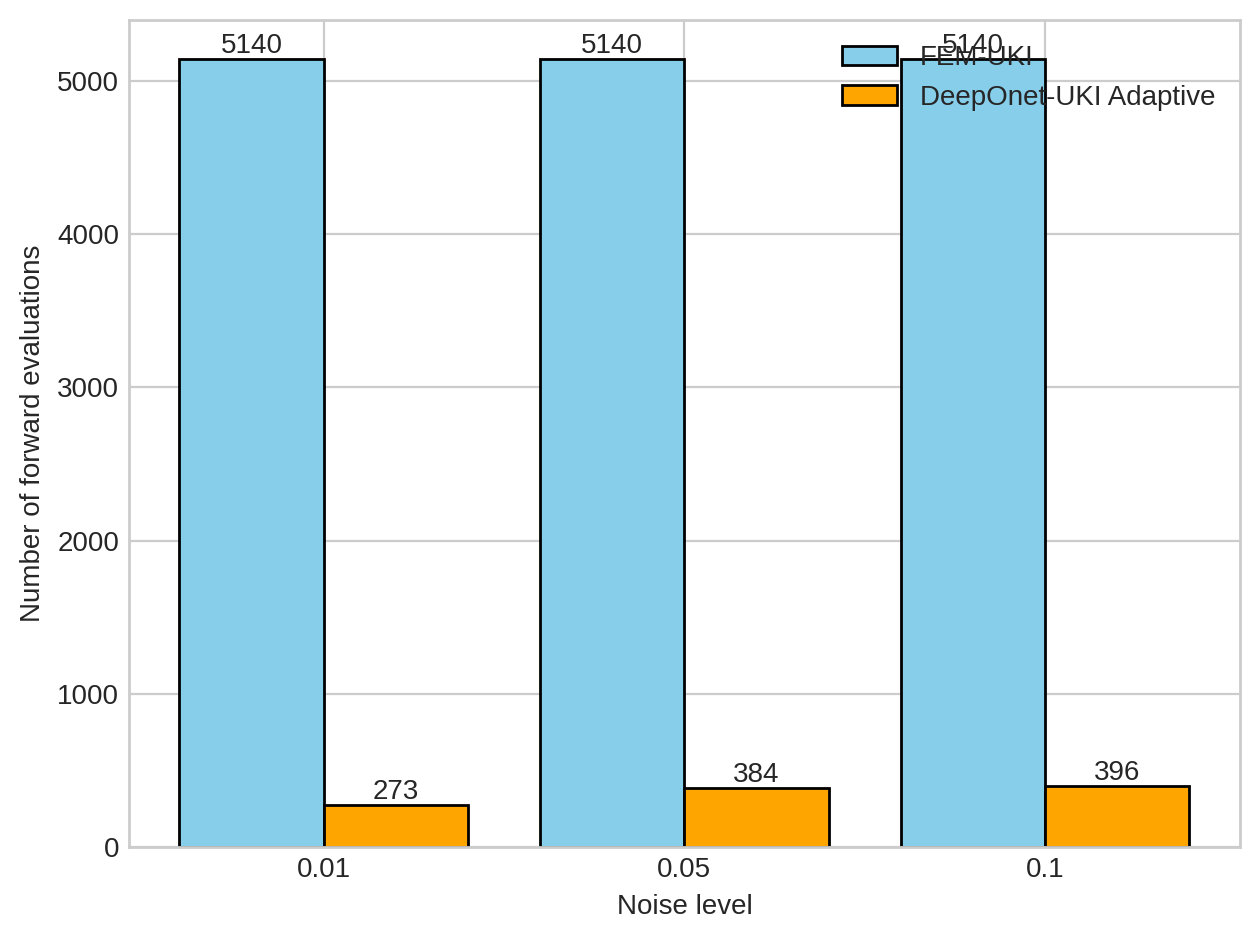}
    \end{overpic}
    \caption{Left: the error box plot of $e_{\mathcal{I}-DeepOnet} - e_{\mathcal{I}-FEM}$. Right: the mean total number of forward evaluations for FEM-UKI and DeepOnet-UKI-Adaptive.}
    \label{err_comparison_diffusion}
\end{figure}

We repeat the experiment with varying noise levels in order to thoroughly compare the performance of DeepOnet-UKI-Adaptive and FEM-UKI. We  repeat the experiment ten times for each noise level, varying the UKI initial values each time. The difference between the relative inversion errors $e_{\mathcal{I}-DeepOnet} - e_{\mathcal{I}-FEM}$ and the mean total number of forward evaluations is displayed in Fig \ref{err_comparison_diffusion}. It is evident that DeepOnet-UKI-Adaptive can even achieve smaller relative inversion errors than FEM-UKI when dealing with higher noise levels. 
Furthermore, our method has a very low computational cost. In comparison to DeepOnet-UKI-Adaptive, the total number of forward evaluations for FEM-UKI is at least ten times higher. That is to say, our approach can efficiently complete the inversion task with significantly lower computational cost once the initial model has been trained. This feature offers the possibility to handle real-time forecasts in  some data assimilation tasks. 

\section{Conclusion}
\label{conclusion}
We present an adaptive operator learning framework for iteratively reducing the model error in Bayesian inverse problems. In particular, the unscented Kalman inversion(UKI) is used to approximate the solution of inverse problems, and the DeepOnet is utilized to construct the surrogate. We suggest a greedy algorithm to choose the adaptive samples to retrain the approximate model. The performance of the proposed strategy has been illustrated by three numerical examples. It should be noted that our adaptive framework may be ineffective in certain situations due to the use of Kalman-based methods. On the one hand, for strictly non-Gaussian posterior distributions, such as those with multiple modes, the UKI fails to capture all of the modes because it approximates the posterior using a Gaussian distribution.  On the other hand, for some low- to moderate-dimensional problems, particularly when focusing on a specific task, the CES framework may be more suitable. Nonetheless, our strategy is intended to be adaptable to a broader range of scenarios. Using this framework, we can easily modify the posterior computation and surrogate methods to broaden the applicability of our method. While future work will address the potential drawbacks mentioned earlier.

\appendix
\section*{}
\textit{Proof of Theorem \ref{main_theorem}}:
We first consider the error estimate of the covariance matrix.  Using Eqs. \eqref{converge_a} $ and $ \eqref{surrogate_converge_a}, we have  
    \begin{equation}
        C_{\infty}^{-1} - \widehat{C}_{\infty}^{-1} = \underbrace{\mathcal{G}^{T}\Sigma_{\eta}^{-1}\mathcal{G} - \widehat{\mathcal{G}}^{T}\Sigma_{\eta}^{-1} \widehat{\mathcal{G}}}_{I_{1}}+\underbrace{(\alpha^{2}C_{\infty}+\Sigma_{\omega})^{-1}-(\alpha^{2}\widehat{C}_{\infty}+\Sigma_{\omega})^{-1}}_{I_{2}}.
    \end{equation}
    Note that the first part is proved in Eq.\eqref{divide}, i.e.,
    \begin{equation}
        \label{I_1}
        \|I_{1}\|_{2} \leq  2\epsilon H\|\Sigma_{\eta}^{-1}\|_{2}.
    \end{equation}
   We consider the second part. Let us assume that $\mathcal{B}$ represents the Banach spaces of matrices in $\mathbb{R}^{N_{m}}\times \mathbb{R}^{N_{m}}$. The operator norm in $\mathbb{R}^{N_{m}}$ is induced by the Euclidean norm. The Banach spaces of linear operators equipped with the operator norm are denoted by $\mathcal{L}:\mathcal{B}\rightarrow \mathcal{B}$.  If we define $f(X;\alpha):=(\alpha^{2}X^{-1} + \Sigma_{\omega})^{-1}$, then $I_{2} = f(C_{\infty}^{-1};\alpha) - f(\widehat{C}_{\infty}^{-1};\alpha)$. $Df(X;\alpha)$, the derivative of $f$, is defined by the direction $\Delta X\in \mathcal{B}$ as 
    \begin{equation}
        Df(X;\alpha)\Delta X = \alpha^{2}(\alpha^{2}\mathcal{I}+X\Sigma_{\omega})^{-1}\Delta X(\alpha^{2}\mathcal{I}+X\Sigma_{\omega})^{-1}.
    \end{equation}
    According to \cite{huang2022iterated}, $f$ is a contraction map in $\mathcal{B}$, such that we have 
    \begin{equation}
        \beta:= \sup_{X\in\mathcal{B}}\|Df(X;\alpha)\|_{2} <1.
    \end{equation}
    Therefore, we can use the \textit{Mean Value Theorem} in matrix functions to get that 
    \begin{equation}
        \label{I_2}
        \begin{split}
        \|I_{2}\|_{2} &= \|f(C_{\infty}^{-1};\alpha) - f(\widehat{C}_{\infty}^{-1};\alpha)\|_{2}\ \\
        &\leq \beta \|C_{\infty}^{-1} - \widehat{C}_{\infty}^{-1}\|_{2}.
        \end{split}
    \end{equation}
    Combining Eqs.\eqref{I_1} and \eqref{I_2} yields 
    \begin{equation}
        \|C_{\infty}^{-1} - \widehat{C}_{\infty}^{-1}\|_{2}\leq 2\epsilon H\|\Sigma_{\eta}^{-1}\|_{2} + \beta \|C_{\infty}^{-1} - \widehat{C}_{\infty}^{-1}\|_{2}.
    \end{equation}
Then we can have the error estimate of the covariance matrix 
    \begin{equation}
        \|C_{\infty}^{-1} - \widehat{C}_{\infty}^{-1}\|_{2} \leq \frac{2\epsilon H}{1-\beta}\|\Sigma_{\eta}^{-1}\|_{2}.
    \end{equation}
We now take into consideration the error estimate of the mean vector. Using Eqs. \eqref{converge_b} and \eqref{surrogate_converge_b}, we obtain
\begin{equation}
    \label{subtraction_cov}
    \begin{split}
    C_{\infty}^{-1}r_{\infty} - \widehat{C}_{\infty}^{-1}\widehat{r}_{\infty}  &= (\mathcal{G}^{T} - \widehat{\mathcal{G}}^{T})\Sigma_{\eta}^{-1}y+(\alpha^{2}C_{\infty}+\Sigma_{\omega})^{-1}\alpha r_{\infty} -(\alpha^{2}\widehat{C}_{\infty}+\Sigma_{\omega})^{-1}\alpha\widehat{r}_{\infty}\\ 
    & = (\mathcal{G}^{T} - \widehat{\mathcal{G}}^{T})\Sigma_{\eta}^{-1}y + \alpha r_{\infty} f(C_{\infty}^{-1};\alpha) - \alpha \widehat{r}_{\infty}f(\widehat{C}_{\infty}^{-1};\alpha).
    \end{split}
\end{equation}
Since 
\begin{equation}
    \label{divide_left}
    \begin{split}
    C_{\infty}^{-1}r_{\infty} - \widehat{C}_{\infty}^{-1}\widehat{r}_{\infty} &= C_{\infty}^{-1}r_{\infty} - C_{\infty}^{-1}\widehat{r}_{\infty} + C_{\infty}^{-1}\widehat{r}_{\infty} - \widehat{C}_{\infty}^{-1}\widehat{r}_{\infty}\\ 
    & = C_{\infty}^{-1}(r_{\infty} - \widehat{r}_{\infty}) + (C_{\infty}^{-1} - \widehat{C}_{\infty}^{-1})\widehat{r}_{\infty},
\end{split}
\end{equation}
and 
\begin{equation}
    \label{divide_right}
    \begin{split}
        \alpha r_{\infty} f(C_{\infty}^{-1};\alpha) - \alpha \widehat{r}_{\infty}f(\widehat{C}_{\infty}^{-1};\alpha)
        &= \alpha (r_{\infty}-\widehat{r}_{\infty}) f(C_{\infty}^{-1};\alpha)  \\
        & + \alpha \widehat{r}_{\infty} \left(f(C_{\infty}^{-1};\alpha)- f(\widehat{C}_{\infty}^{-1};\alpha) \right). 
    \end{split}
\end{equation}
We can obtain  
\begin{equation}
    \label{mean}
    \begin{split}
   \left(C_{\infty}^{-1} - \alpha f(C_{\infty}^{-1};\alpha)\right)(r_{\infty} - \widehat{r}_{\infty}) &= \underbrace{(\mathcal{G}^{T} - \widehat{\mathcal{G}}^{T})\Sigma_{\eta}^{-1}y}_{I_{3}} - \underbrace{(C_{\infty}^{-1} - \widehat{C}_{\infty}^{-1})\widehat{r}_{\infty}}_{I_{4}} \\
    &+ \underbrace{\alpha \widehat{r}_{\infty} (f(C_{\infty}^{-1};\alpha)- f(\widehat{C}_{\infty}^{-1};\alpha) )}_{I_{5}}.
    \end{split}
\end{equation}
For the first part $I_3$, we have 
\begin{equation}
    \label{mean_I_1}
    \|I_{3}\|_{2} \leq \|\mathcal{G}^{T} - \widehat{\mathcal{G}}^{T}\|_{2}\|\Sigma_{\eta}^{-1}y\|_{2} \leq \epsilon \|\Sigma_{\eta}^{-1}y\|_{2}.
\end{equation}
And then the second part, 
\begin{equation}
    \label{mean_I_2}
    \|I_{4}\|_{2} \leq \|C_{\infty}^{-1} - \widehat{C}_{\infty}^{-1}\|_{2}\|\widehat{r}_{\infty}\|_{2}.
\end{equation}
For the last part, according to Eq.\eqref{I_2} we have 
\begin{equation}
    \label{mean_I_3}
    \begin{split}
    \|I_{5}\|_{2} &\leq \alpha\|\widehat{r}_{\infty}\|_{2}\|f(C_{\infty}^{-1};\alpha)- f(\widehat{C}_{\infty}^{-1};\alpha)\|_{2}\\ 
    &\leq \alpha\beta \|\widehat{r}_{\infty}\|_{2}\|C_{\infty}^{-1} - \widehat{C}_{\infty}^{-1}\|_{2}.
    \end{split}
\end{equation}
Moreover, from Eq.\eqref{converge_a} we have 
\begin{equation}
    \label{mean_I_4}
    C_{\infty}^{-1} - \alpha f(X_{\infty}^{-1};\alpha) = (1-\alpha)(\alpha^{2}C_{\infty} + \Sigma_{\omega})^{-1} + \mathcal{G}^{T}\Sigma_{\eta}^{-1}\mathcal{G}\succ 0.
\end{equation}
Combining Eqs.\eqref{mean}-\eqref{mean_I_4}, we have 
\begin{equation}
    \label{mean_error}
    \begin{split}
    \|r_{\infty} - \widehat{r}_{\infty}\|_{2} &\leq \|\left((1-\alpha)(\alpha^{2}C_{\infty} + \Sigma_{\omega})^{-1} + \mathcal{G}^{T}\Sigma_{\eta}^{-1}\mathcal{G}\right)^{-1}\|_{2}
        \|I_{3} - I_{4} + I_{5}\|_{2}\\ 
    & \leq \|(\mathcal{G}^{T}\Sigma_{\eta}^{-1}\mathcal{G})^{-1}\|_{2}\left(
        \epsilon \|\Sigma_{\eta}^{-1}y\|_{2} + (1 + \alpha\beta) \|\widehat{r}_{\infty}\|_{2}\|C_{\infty}^{-1} - \widehat{C}_{\infty}^{-1}\|_{2}
    \right)\\ 
    & \leq \|(\mathcal{G}^{T}\Sigma_{\eta}^{-1}\mathcal{G})^{-1}\|_{2}\left(
        \|\Sigma_{\eta}^{-1}y\|_{2} +\frac{2(1 + \alpha\beta)  H}{1-\beta} \|\widehat{r}_{\infty}\|_{2}\|\Sigma_{\eta}^{-1}\|_{2}
    \right) \epsilon.
    \end{split}
\end{equation}
Note that by Eq.\eqref{surrogate_converge}, we have 
\begin{equation}
    \begin{split}
    \left(\widehat{C}_{\infty}^{-1} - \alpha(\alpha^{2}\widehat{C}_{\infty}+\Sigma_{\omega})^{-1}\right)\widehat{r}_{\infty}&=\left(\widehat{\mathcal{G}}^{T}\Sigma_{\eta}^{-1}\widehat{\mathcal{G}} + (1-\alpha)(\alpha^{2}\widehat{C}_{\infty}+\Sigma_{\omega})^{-1}\right)\widehat{r}_{\infty}\\
    &=\widehat{\mathcal{G}}^{T}\Sigma_{\eta}^{-1}y.
    \end{split}
\end{equation}
Afterwards, we can get the bound of $\widehat{r}_{\infty}$ as  
\begin{equation}
    \label{r_bound}
    \begin{split}
    \|\widehat{r}_{\infty}\|_{2}&\leq \left\|\left(\widehat{\mathcal{G}}^{T}\Sigma_{\eta}^{-1}\widehat{\mathcal{G}} + (1-\alpha)(\alpha^{2}\widehat{C}_{\infty}+\Sigma_{\omega})^{-1}\right)^{-1}\right\|_{2}\|\widehat{\mathcal{G}}^{T}\Sigma_{\eta}^{-1}y\|_{2}\\ 
    &  \leq \left\|\left(\widehat{\mathcal{G}}^{T}\Sigma_{\eta}^{-1}\widehat{\mathcal{G}}\right)^{-1} \right\|_{2}\|\widehat{\mathcal{G}}^{T}\Sigma_{\eta}^{-1}y\|_{2}\\
    &\leq H\left\|\left(\widehat{\mathcal{G}}^{T}\Sigma_{\eta}^{-1}\widehat{\mathcal{G}}\right)^{-1} \right\|_{2}\|\Sigma_{\eta}^{-1}y\|_{2}.
    \end{split}
\end{equation}
Combining Assumption \ref{Assumption3} and Eqs.\eqref{mean_error} and \eqref{r_bound}, we can get 
\begin{equation}
    \begin{split}
    \|r_{\infty} - \widehat{r}_{\infty}\|_{2}&\leq \|(\mathcal{G}^{T}\Sigma_{\eta}^{-1}\mathcal{G})^{-1}\|_{2}\left(
        \|\Sigma_{\eta}^{-1}y\|_{2} +\frac{2(1 + \alpha\beta)  H}{1-\beta} \|\widehat{r}_{\infty}\|_{2}\|\Sigma_{\eta}^{-1}\|_{2}\right)\epsilon \\ 
        & \leq \|(\mathcal{G}^{T}\Sigma_{\eta}^{-1}\mathcal{G})^{-1}\|_{2}\|\Sigma_{\eta}^{-1}y\|_{2}\left(1 + \frac{2(1 + \alpha\beta)  H^{2}}{1-\beta}\left\|\left(\widehat{\mathcal{G}}^{T}\Sigma_{\eta}^{-1}\widehat{\mathcal{G}}\right)^{-1} \right\|_{2}\|\Sigma_{\eta}^{-1}\|_{2}\right)\epsilon \\ 
        & \leq \frac{K_{1}\|\Sigma_{\eta}^{-1}y\|_{2}}{\|\mathcal{G}^{T}\Sigma_{\eta}^{-1}\mathcal{G}\|_{2}}\left(1 + \frac{2(1 + \alpha\beta) K_{2} H^{2}}{1-\beta}\frac{\|\Sigma_{\eta}^{-1}\|_{2}}{\left\|\widehat{\mathcal{G}}^{T}\Sigma_{\eta}^{-1}\widehat{\mathcal{G}} \right\|_{2}}\right)\epsilon\\ 
        &\leq \frac{K_{1}H_{\eta}H_{y}}{C_{1}}\left(1 + \frac{2(1 + \alpha\beta) K_{2}H_{\eta} H^{2}}{(1-\beta)C_{2}}\right)\epsilon,
    \end{split} 
\end{equation}
where $K_{1}, K_{2}$ is the upper bound of the condition number of $\mathcal{G}^{T}\Sigma_{\eta}^{-1}\mathcal{G}, \widehat{\mathcal{G}}^{T}\Sigma_{\eta}^{-1}\widehat{\mathcal{G}}$ respectively and $H_{\eta}:=\|\Sigma_{\eta}^{-1}\|_{2}, H_{y} = \|y\|_{2}$.
\end{proof}

\section*{Acknowledgment}
The authors would like to thank anonymous referees for their many insightful and constructive comments and suggestions that improved the organization and quality of our paper essentially.


\begin{thebibliography}{10}

\bibitem{cui2016scalable}
Tiangang Cui, Youssef Marzouk, and Karen Willcox.
\newblock Scalable posterior approximations for large-scale bayesian inverse
  problems via likelihood-informed parameter and state reduction.
\newblock {\em Journal of Computational Physics}, 315:363--387, 2016.

\bibitem{zhu2016bayesian}
Hejun Zhu, Siwei Li, Sergey Fomel, Georg Stadler, and Omar Ghattas.
\newblock A bayesian approach to estimate uncertainty for full-waveform
  inversion using a priori information from depth migration.
\newblock {\em Geophysics}, 81(5):R307--R323, 2016.

\bibitem{alexanderian2016fast}
Alen Alexanderian, Noemi Petra, Georg Stadler, and Omar Ghattas.
\newblock A fast and scalable method for a-optimal design of experiments for
  infinite-dimensional bayesian nonlinear inverse problems.
\newblock {\em SIAM Journal on Scientific Computing}, 38(1):A243--A272, 2016.

\bibitem{bui2013computational}
Tan Bui-Thanh, Omar Ghattas, James Martin, and Georg Stadler.
\newblock A computational framework for infinite-dimensional bayesian inverse
  problems part i: The linearized case, with application to global seismic
  inversion.
\newblock {\em SIAM Journal on Scientific Computing}, 35(6):A2494--A2523, 2013.

\bibitem{petra2014computational}
Noemi Petra, James Martin, Georg Stadler, and Omar Ghattas.
\newblock A computational framework for infinite-dimensional bayesian inverse
  problems, part ii: Stochastic newton mcmc with application to ice sheet flow
  inverse problems.
\newblock {\em SIAM Journal on Scientific Computing}, 36(4):A1525--A1555, 2014.

\bibitem{cui2015data}
Tiangang Cui, Youssef~M Marzouk, and Karen~E Willcox.
\newblock Data-driven model reduction for the bayesian solution of inverse
  problems.
\newblock {\em International Journal for Numerical Methods in Engineering},
  102(5):966--990, 2015.

\bibitem{lieberman2010parameter}
Chad Lieberman, Karen Willcox, and Omar Ghattas.
\newblock Parameter and state model reduction for large-scale statistical
  inverse problems.
\newblock {\em SIAM Journal on Scientific Computing}, 32(5):2523--2542, 2010.

\bibitem{schillings2020convergence}
Claudia Schillings, Bj{\"o}rn Sprungk, and Philipp Wacker.
\newblock On the convergence of the laplace approximation and
  noise-level-robustness of laplace-based monte carlo methods for bayesian
  inverse problems.
\newblock {\em Numerische Mathematik}, 145:915--971, 2020.

\bibitem{Conrad2016JASA}
Patrick~R Conrad, Youssef~M Marzouk, Natesh~S Pillai, and Aaron Smith.
\newblock Accelerating asymptotically exact mcmc for computationally intensive
  models via local approximations.
\newblock {\em Journal of the American Statistical Association},
  111(516):1591--1607, 2016.

\bibitem{li2014adaptive}
Jinglai Li and Youssef~M Marzouk.
\newblock Adaptive construction of surrogates for the bayesian solution of
  inverse problems.
\newblock {\em SIAM Journal on Scientific Computing}, 36(3):A1163--A1186, 2014.

\bibitem{marzouk2007stochastic}
Youssef~M Marzouk, Habib~N Najm, and Larry~A Rahn.
\newblock Stochastic spectral methods for efficient bayesian solution of
  inverse problems.
\newblock {\em Journal of Computational Physics}, 224(2):560--586, 2007.

\bibitem{yan2017convergence}
Liang Yan and Yuan-Xiang Zhang.
\newblock Convergence analysis of surrogate-based methods for bayesian inverse
  problems.
\newblock {\em Inverse Problems}, 33(12):125001, 2017.

\bibitem{yan2020adaptive}
Liang Yan and Tao Zhou.
\newblock An adaptive surrogate modeling based on deep neural networks for
  large-scale bayesian inverse problems.
\newblock {\em Communications in Computational Physics}, 28(5):2180--2205,
  2020.

\bibitem{Han+Jentzen+E2018PNAS}
J.~Han, A.~Jentzen, and W.~E.
\newblock Solving high-dimensional partial differential equations using deep
  learning.
\newblock {\em Proceedings of the National Academy of Sciences},
  115(34):8505--8510, 2018.

\bibitem{raissi2019physics}
Maziar Raissi, Paris Perdikaris, and George~E Karniadakis.
\newblock Physics-informed neural networks: A deep learning framework for
  solving forward and inverse problems involving nonlinear partial differential
  equations.
\newblock {\em Journal of Computational physics}, 378:686--707, 2019.

\bibitem{Schwab+Zech2019AA}
C.~Schwab and J.~Zech.
\newblock Deep learning in high dimension: Neural network expression rates for
  generalized polynomial chaos expansions in uq.
\newblock {\em Analysis and Applications}, 17(01):19--55, 2019.

\bibitem{Tripathy+Bilionis2018JCP}
R.~K. Tripathy and I.~Bilionis.
\newblock Deep {UQ}: Learning deep neural network surrogate models for high
  dimensional uncertainty quantification.
\newblock {\em Journal of Computational Physics}, 375:565--588, 2018.

\bibitem{Zhu+Zabaras2018bayesian}
Y.~Zhu and N.~Zabaras.
\newblock Bayesian deep convolutional encoder--decoder networks for surrogate
  modeling and uncertainty quantification.
\newblock {\em Journal of Computational Physics}, 366:415--447, 2018.

\bibitem{deveney2019deep}
Teo Deveney, Eike Mueller, and Tony Shardlow.
\newblock A deep surrogate approach to efficient {Bayesian inversion in PDE}
  and integral equation models.
\newblock {\em arXiv:1910.01547}, 2019.

\bibitem{yanRTO2021}
Liang Yan and Tao Zhou.
\newblock An acceleration strategy for randomize-then-optimize sampling via
  deep neural networks.
\newblock {\em Journal of Computational Mathematics}, 39(6):848--864, 2021.

\bibitem{li2023surrogate}
Yongchao Li, Yanyan Wang, and Liang Yan.
\newblock Surrogate modeling for bayesian inverse problems based on
  physics-informed neural networks.
\newblock {\em Journal of Computational Physics}, 475:111841, 2023.

\bibitem{nabian2020adaptive}
Mohammad~Amin Nabian and Hadi Meidani.
\newblock {Adaptive Physics-Informed Neural Networks for Markov-Chain Monte
  Carlo}.
\newblock {\em arXiv: 2008.01604}, 2020.

\bibitem{wang2022and}
Sifan Wang, Xinling Yu, and Paris Perdikaris.
\newblock When and why pinns fail to train: A neural tangent kernel
  perspective.
\newblock {\em Journal of Computational Physics}, 449:110768, 2022.

\bibitem{krishnapriyan2021characterizing}
Aditi Krishnapriyan, Amir Gholami, Shandian Zhe, Robert Kirby, and Michael~W
  Mahoney.
\newblock Characterizing possible failure modes in physics-informed neural
  networks.
\newblock {\em Advances in Neural Information Processing Systems},
  34:26548--26560, 2021.

\bibitem{gao2023failure}
Zhiwei Gao, Liang Yan, and Tao Zhou.
\newblock Failure-informed adaptive sampling for pinns.
\newblock {\em SIAM Journal on Scientific Computing}, 45(4):A1971--A1994, 2023.

\bibitem{gao2023rFINN}
Zhiwei Gao, Tao Tang, Liang Yan, and Tao Zhou.
\newblock Failure-informed adaptive sampling for pinns, part ii: combining with
  re-sampling and subset simulation.
\newblock {\em Communications on Applied Mathematics and Computation},
  6(3):1720--1741, 2024.

\bibitem{CSIAM-AM-5-636}
Wenbin Liu, Liang Yan, Tao Zhou, and Yuancheng Zhou.
\newblock Failure-informed adaptive sampling for pinns, part iii: Applications
  to inverse problems.
\newblock {\em CSIAM Transactions on Applied Mathematics}, 5(3):636--670, 2024.

\bibitem{mcclenny2020self}
Levi McClenny and Ulisses Braga-Neto.
\newblock Self-adaptive physics-informed neural networks using a soft attention
  mechanism.
\newblock {\em arXiv preprint arXiv:2009.04544}, 2020.

\bibitem{xiang2022self}
Zixue Xiang, Wei Peng, Xu~Liu, and Wen Yao.
\newblock Self-adaptive loss balanced physics-informed neural networks.
\newblock {\em Neurocomputing}, 496:11--34, 2022.

\bibitem{li2020fourier}
Zongyi Li, Nikola Kovachki, Kamyar Azizzadenesheli, Burigede Liu, Kaushik
  Bhattacharya, Andrew Stuart, and Anima Anandkumar.
\newblock Fourier neural operator for parametric partial differential
  equations.
\newblock {\em arXiv preprint arXiv:2010.08895}, 2020.

\bibitem{lu2021learning}
Lu~Lu, Pengzhan Jin, Guofei Pang, Zhongqiang Zhang, and George~Em Karniadakis.
\newblock Learning nonlinear operators via deeponet based on the universal
  approximation theorem of operators.
\newblock {\em Nature machine intelligence}, 3(3):218--229, 2021.

\bibitem{cao2023residual}
Lianghao Cao, Thomas O'Leary-Roseberry, Prashant~K Jha, J~Tinsley Oden, and
  Omar Ghattas.
\newblock Residual-based error correction for neural operator accelerated
  infinite-dimensional bayesian inverse problems.
\newblock {\em Journal of Computational Physics}, 486:112104, 2023.

\bibitem{genzel2022solving}
Martin Genzel, Jan Macdonald, and Maximilian M{\"a}rz.
\newblock Solving inverse problems with deep neural networks--robustness
  included?
\newblock {\em IEEE transactions on pattern analysis and machine intelligence},
  45(1):1119--1134, 2022.

\bibitem{cleary2021calibrate}
Emmet Cleary, Alfredo Garbuno-Inigo, Shiwei Lan, Tapio Schneider, and Andrew~M
  Stuart.
\newblock Calibrate, emulate, sample.
\newblock {\em Journal of Computational Physics}, 424:109716, 2021.

\bibitem{yan2021stein}
Liang Yan and Tao Zhou.
\newblock Stein variational gradient descent with local approximations.
\newblock {\em Computer Methods in Applied Mechanics and Engineering},
  386:114087, 2021.

\bibitem{Garbuno2020interacting}
A.~Garbuno-Inigo, F.~Hoffmann, W.~Li, and A.~M. Stuart.
\newblock Interacting langevin diffusions: Gradient structure and ensemble
  kalman sampler.
\newblock {\em SIAM Journal on Applied Dynamical Systems}, 19(1):412--441,
  2020.

\bibitem{iglesias2013ensemble}
M.A. Iglesias, K.J.H. Law, and A.M. Stuart.
\newblock {Ensemble Kalman methods for inverse problems}.
\newblock {\em Inverse Problems}, 29(4):045001, 2013.

\bibitem{huang2022iterated}
Daniel~Zhengyu Huang, Tapio Schneider, and Andrew~M Stuart.
\newblock Iterated kalman methodology for inverse problems.
\newblock {\em Journal of Computational Physics}, 463:111262, 2022.

\bibitem{stuart2010inverse}
Andrew~M Stuart.
\newblock Inverse problems: a bayesian perspective.
\newblock {\em Acta numerica}, 19:451--559, 2010.

\bibitem{Brooks2011}
S.~Brooks, A.~Gelman, G.~L. Jones, and X.~L. Meng, editors.
\newblock {\em Handbook of {M}arkov chain {M}onte {C}arlo}.
\newblock Chapman \& Hall/CRC Handbooks of Modern Statistical Methods. CRC
  Press, Boca Raton, FL, 2011.

\bibitem{Blei2017variational}
D.~M. Blei, A.~Kucukelbir, and J.~D. McAuliffe.
\newblock Variational inference: A review for statisticians.
\newblock {\em Journal of the American statistical Association},
  112(518):859--877, 2017.

\bibitem{lanthaler2022error}
Samuel Lanthaler, Siddhartha Mishra, and George~E Karniadakis.
\newblock Error estimates for deeponets: A deep learning framework in infinite
  dimensions.
\newblock {\em Transactions of Mathematics and Its Applications}, 6(1):1--141,
  2022.

\bibitem{yan2019adaptive1}
Liang Yan and Tao Zhou.
\newblock Adaptive multi-fidelity polynomial chaos approach to bayesian
  inference in inverse problems.
\newblock {\em Journal of Computational Physics}, 381:110--128, 2019.

\bibitem{Chen2019projected}
P.~Chen, K.~Wu, J.~Chen, T.~O'Leary-Roseberry, and O.~Ghattas.
\newblock Projected stein variational newton: A fast and scalable bayesian
  inference method in high dimensions.
\newblock In {\em Advances in Neural Information Processing Systems}, pages
  15130--15139, 2019.

\bibitem{Detommaso2018stein}
G.~Detommaso, T.~Cui, Y.~Marzouk, A.~Spantini, and R.~Scheichl.
\newblock A stein variational newton method.
\newblock In {\em Advances in Neural Information Processing Systems}, pages
  9169--9179, 2018.

\bibitem{Liu2016stein}
Q.~Liu and D.~Wang.
\newblock Stein variational gradient descent: A general purpose bayesian
  inference algorithm.
\newblock In {\em Advances in neural information processing systems}, pages
  2378--2386, 2016.

\bibitem{Chada2021}
N.K. Chada and X.~T. Tong.
\newblock { Convergence acceleration of ensemble Kalman inversion in nonlinear
  settings}.
\newblock {\em Mathematics of computation}, 91:1247--1280, 2021.

\bibitem{carrillo2022consensus}
Jos{\'e}~A Carrillo, Franca Hoffmann, Andrew~M Stuart, and Urbain Vaes.
\newblock Consensus-based sampling.
\newblock {\em Studies in Applied Mathematics}, 148(3):1069--1140, 2022.

\bibitem{ernst2015analysis}
O.G. Ernst, B.~Sprungk, and H.~Starkloff.
\newblock {Analysis of the ensemble and polynomial chaos Kalman filters in
  Bayesian inverse problems}.
\newblock {\em SIAM/ASA Journal on Uncertainty Quantification}, 3(1):823--851,
  2015.

\bibitem{huang2022efficient}
Daniel~Zhengyu Huang, Jiaoyang Huang, Sebastian Reich, and Andrew~M Stuart.
\newblock Efficient derivative-free bayesian inference for large-scale inverse
  problems.
\newblock {\em Inverse Problems}, 38(12):125006, 2022.

\bibitem{wang2023adaptive}
Yanyan Wang, Qian Li, and Liang Yan.
\newblock Adaptive ensemble kalman inversion with statistical linearization.
\newblock {\em Communications in Computational Physics}, 33(5):1357--1380,
  2023.

\bibitem{Weissmann_2022}
S.~Weissmann, N.K. Chada, C.~Schillings, and X.~T. Tong.
\newblock {Adaptive Tikhonov strategies for stochastic ensemble Kalman
  inversion}.
\newblock {\em Inverse Problems}, 38(4):045009, 2022.

\bibitem{yan2019adaptive}
Liang Yan and Tao Zhou.
\newblock An adaptive multifidelity {PC}-based ensemble kalman inversion for
  inverse problems.
\newblock {\em International Journal for Uncertainty Quantification},
  9(3):205--220, 2019.

\bibitem{wan2000unscented}
Eric~A Wan and Rudolph Van Der~Merwe.
\newblock The unscented kalman filter for nonlinear estimation.
\newblock In {\em Proceedings of the IEEE 2000 Adaptive Systems for Signal
  Processing, Communications, and Control Symposium (Cat. No. 00EX373)}, pages
  153--158. Ieee, 2000.

\end{thebibliography}
\end{document}